\theoremstyle{definition}
\newtheorem{Definition}{Definition}[section]
\newtheorem{Example}{Example}[section]
\newtheorem{Remark}{Remark}[section]
\theoremstyle{plain}
\newtheorem{Proposition}[Definition]{Proposition}
\newtheorem{Lemma}[Definition]{Lemma}
\newtheorem{Theorem}[Definition]{Theorem}
\newtheorem{Assumption}[Definition]{Assumption}
\numberwithin{equation}{section}
\newcommand{\biggg}[1]{{\hbox{$\left#1\vbox to 20.5pt{}\right.\n@space$}}}
\newcommand{\Biggg}[1]{{\hbox{$\left#1\vbox to 23.5pt{}\right.\n@space$}}}
\newcommand{\bigggg}[1]{{\hbox{$\left#1\vbox to 26.5pt{}\right.\n@space$}}}
\newcommand{\Bigggg}[1]{{\hbox{$\left#1\vbox to 29.5pt{}\right.\n@space$}}}
\newcommand{\biggggg}[1]{{\hbox{$\left#1\vbox to 32.5pt{}\right.\n@space$}}}
\newcommand{\Biggggg}[1]{{\hbox{$\left#1\vbox to 35.5pt{}\right.\n@space$}}}
\newcommand{\bigggggg}[1]{{\hbox{$\left#1\vbox to 38.5pt{}\right.\n@space$}}}
\newcommand{\Bigggggg}[1]{{\hbox{$\left#1\vbox to 41.5pt{}\right.\n@space$}}}
\newcommand{\Bigggl}{\mathopen\Biggg}
\newcommand{\Bigggr}{\mathclose\Biggg}
\begin{document}
\title[Simulations of an Euler-Maruyama type approximation of very high dimensional SDEs]{An efficient weak Euler-Maruyama type approximation scheme of very high dimensional SDEs by orthogonal random variables}
\author{Jir\^o Akahori}
\address{J. Akahori. Department of Mathematical Sciences, Ritsumeikan University, 1-1-1, Nojihigashi, Kusatsu, Shiga, 525-8577, Japan}
\curraddr{}
\email{akahori@se.ritsumei.ac.jp}
\thanks{}
\author{Masahiro Kinuya}
\address{M. Kinuya. Okasan Securities Co., Ltd., 1-17-6, Nihonbashi, Chuo-ku, Tokyo, 103-8278, Japan}
\curraddr{}
\email{}
\thanks{}
\author{Takashi Sawai}
\address{T. Sawai. Department of Mathematical Sciences, Ritsumeikan University, 1-1-1, Nojihigashi, Kusatsu, Shiga, 525-8577, Japan}
\curraddr{}
\email{}
\thanks{}
\author{Tomooki Yuasa}
\address{T. Yuasa. Department of Mathematical Sciences, Ritsumeikan University, 1-1-1, Nojihigashi, Kusatsu, Shiga, 525-8577, Japan}
\curraddr{}
\email{to-yuasa@fc.ritsumei.ac.jp}
\thanks{}
\subjclass[2010]{Primary: 60H35, 65C05, 60H10}
\keywords{Euler-Maruyama schemes, Stochastic differential equations, Monte Carlo method, High dimensional simulation, Weak rate of convergence, It\^o--Taylor expansion, Wagner--Platen expansion}
\date{}

\begin{abstract}
\label{sec:abst}
We will introduce Euler-Maruyama approximations given by an orthogonal system in $L^{2}[0,1]$ for high dimensional SDEs, which could be finite dimensional approximations of SPDEs.
In general, the higher the dimension is, the more one needs to generate uniform random numbers at every time step in numerical simulation.
The schemes proposed in this paper, in contrast, can deal with this problem by generating very few uniform random numbers at every time step.
The schemes save time in the simulation of very high dimensional SDEs.
In particular, we conclude that an Euler-Maruyama approximation based on the Walsh system is efficient in high dimensions.
\end{abstract}
\maketitle

\section{Introduction}
\label{sec:1}
\subsection{Simulation of a high dimensional SDE}
\label{sec:1.1}
Let $X$ be a unique solution of the $d$-dimensional (time-homogeneous Markovian type) stochastic differential equation (SDE for short)
\begin{align}
\label{eq:1.1}
{\rm d}X_{t}=\sigma(X_{t}){\rm d}W_{t}+b(X_{t}){\rm d}t, \quad t \geq 0, \quad X_{0} \equiv x_{0} \in {\mathbb R}^{d}.
\end{align}
Here, $W$ is a $d$-dimensional Brownian motion starting at the origin, and the coefficients $\sigma: {\mathbb R}^{d} \to {\mathbb R}^{d} \otimes {\mathbb R}^{d}$ and $b: {\mathbb R}^{d} \to {\mathbb R}^{d}$ are sufficiently regular.
Our purpose in this paper is to provide efficient weak approximations for the quantity ${\mathbb E}[f(X_{T})]$ in high dimensions for a given function $f$ and a time horizon $T>0$.
The quantity can mean, for example, a fair price of European type derivatives in a financial market, where $T$ is the maturity and $f: {\mathbb R}^{d} \to {\mathbb R}$ is a pay-off function.
In financial practice, its numerical value is of great importance.
With an explicit finite dimensional expression, the problem reduces to a standard numerical analysis such as approximation of a finite --- desirably less than three --- dimensional integral using a Riemann sum, but except for some simple cases such an expression is not available.
A simple but most frequently used way to reduce it to a finite dimensional integration is the so-called Euler-Maruyama approximation (EM scheme for short), which is typically given by:
\begin{align}
\label{eq:1.2}
X_{0}^{(n)}=x_{0}, \quad X_{\ell T/n}^{(n)}=X_{(\ell-1)T/n}^{(n)}+\sigma(X_{(\ell-1)T/n}^{(n)})(W_{\ell T/n}-W_{(\ell-1)T/n})+b(X_{(\ell-1)T/n}^{(n)})\frac{T}{n}, \quad \ell \in \{1,2,\ldots,n\}.
\end{align}
The numerical integration is calculated, or should we say, simulated, by the Monte Carlo method.

It is widely-recognized that in the standard EM scheme (see e.g. \cite{KEPPE,MNG}), apart from the numerical integration error, the finite dimensional reduction error is of order $n^{-1/2}$ in the strong sense, and of order $n^{-1}$ in the weak sense, under some regularity conditions on $\sigma$ and $b$.
Moreover, even if the system $\{W_{\ell T/n }-W_{(\ell-1)T/n}\}_{\ell=1}^{n}$ (called the Gaussian system hereinafter) is replaced with random variables ``simulating Brownian increments", by which we mean random variables sharing moments up to some order with the increments of the standard Brownian motion, the weak error is still of order $n^{-1}$ (see e.g. \cite{KEPPE}).
This implies that we can change and select the system in accordance with various requirements.

In the present paper, we will propose new weak EM schemes that work {\it faster} than the existing schemes in \underline{very high dimensions};
when $ d $ is very large.
In our schemes, to simulate high dimensional Brownian increments we use a \underline{single} uniform random variable taking values in the set of integers.
Our schemes are theoretically shown to have the same convergence order $n^{-1}$ in the weak sense as the standard EM scheme, and several numerical experiments in high dimensions show that the accuracy of our schemes are comparable to the standard EM scheme and the computation time of our schemes is much faster than the standard EM scheme.

Recent year witnessed the rapidly growing demands and supplies for efficient numerical simulation schemes for high-dimensional stochastic differential equations.
The demands are from 
\begin{itemize}
    \item Mean-field (or McKean-Vlasov)
    type SDE, and other multi-scale models,
    \item (Galerkin) approximation of Stochastic Partial Differential Equation (SPDE for short),
    \item Approximation of infinite particle systems in relations with random matrices.
    \item and so on.
\end{itemize}
The supplies are mainly due to the explosive development of computing ability of computers, but the recent success of machine-learning/deep learning technologies 
should not be dismissed.
An almost dimension-free scheme for solving forward-backward stochastic differential equations and related non-linear partial differential equations using deep learning
has already gained huge popularity within these couple of years (\cite{EWHJJA}, \cite{HJJAEW},  \cite{HCPHWX}).
Also, many numerical schemes
that are claimed to be efficient
for each of the above mentioned problems
have been proposed.
For numerical analysis of the mean field SDE,
it dates back, to the best of the author's knowledge, to the paper by S. Ogawa \cite{OgawaI}, and some  basic results are exploited in
\cite{OgawaII}, \cite{OgawaIII},  \cite{Kohatsu-Ogawa}, and also \cite{An-Ko}. 
The mean-field game (see e.g. \cite{LL}) then became a driving force
of further studies. 
For the Galerkin approximation of SPDEs, see e.g. \cite{JAKEP1} and \cite{JAKEP2}, 
and for EM approximations of infinite particle systems in relations with 
random matrices, 
see e.g. \cite{Taguchi} and references therein. 

Contrasting with such specific schemes,
ours is universally applicable
since we are just looking at the simulation of
Brownian increments.

\subsection{Contributions of the present paper}
In this paper, we provide two systems based on an orthogonal system of $L^{2}[0,1]$ in order to deal with SDEs in ``very high dimensions", by which we mean, let's say, $d \sim 2^{32}$.
In such high dimensions, generating a Gaussian system is heavily time consuming since we need to generate many uniform random numbers at every time step.
In contrast, our schemes use very few uniform random numbers (If using uniform $32$-bit random numbers and $d \leq 2^{31}$, our schemes only use a {\bf single} uniform random number at every time step).

It is true that, mathematically speaking, any Gaussian system can also be generated from a single uniform random variable if we were given an ideally uniform one from $[0,1]$, which is equivalent to infinitely many binary distributed random numbers.
In reality, however, a {\em uniform} random number is actually a finite sequence of binary random numbers, which is, by the dyadic expansion, equivalent to {\it uniform} random integers in a finite set. The {\em Mersenne twister}\footnote{Many variants have been proposed, improving the original one, mainly by M. Matsumoto and his collaborators. See e.g. \cite{Harase} and references therein.}, the most reliable pseudo random number generating algorithm offered by Matsumoto-Nishimura \cite{MMNT}, generates uniform $32$-bit numbers, or, equivalently, {\it uniform} integers over $\{1,2, \ldots, 2^{32}\}$.

The very heart of our schemes is an algorithm to generate ``simulated Brownian increments" in very high dimensions out of a given set of uniform integers.
We will present two distinct schemes; one is based on the Haar system of orthogonal functions in $ L^2 [0,1] $, and the other 
comes from the Walsh system. 
In respect of computational time, 
both are experimentally shown to be
more efficient than the Gaussian 
schemes (see Section \ref{sec:4}). 
Between the two of our new schemes, 
the former is a bit more efficient, 
but with an on-line algorithm 
implied by Theorem \ref{theo:2.4}, 
the difference can be reduced (see Section \ref{sec:4}). 
In respect of accuracy, however, the former may suffer 
from problems caused by greater higher moments. 
One of the problems is illustrated by the difference of the bounds
\eqref{Haarbound} and \eqref{Walshbound} in Theorem \ref{theo:3.3}. 
The difference is more clearly seen in the simple example of Proposition \ref{prop:3.new}

\subsection{Notations}
\label{sec:1.3}
Throughout this paper, we use $d, n, m \in {\mathbb N}$ as the dimension of SDEs, the number of partitions of the closed interval $[0,T]$ (the number of time steps on the closed interval $[0,T]$ for the EM scheme) and the number of
 Monte Carlo trials, respectively.
$\delta_{i,j}$, $i,j \in {\mathbb N}$ denotes the Kronecker delta, i.e., if $i=j$, then $\delta_{i,j}=1$, otherwise, if $i \neq j$, then $\delta_{i,j}=0$.
The components of a vector are denoted by superscripts without parentheses.
The row vectors of a matrix are denoted by superscripts without parentheses and the column vectors of a matrix are denoted by subscripts.
On the other hand, superscripts with parentheses and subscripts mean the dependence on parameters.

The Euclidean norm on ${\mathbb R}^{d}$ and ${\mathbb R}^{d} \otimes {\mathbb R}^{d}$ are denoted by $|x|:=(\sum_{i=1}^{d}|x^{i}|^{2})^{1/2}$, $x \in {\mathbb R}^{d}$ and $|x|:=(\sum_{i,j=1}^{d}|x_{j}^{i}|^{2})^{1/2}$, $x \in {\mathbb R}^{d} \otimes {\mathbb R}^{d}$, respectively.

Let $h \in \{1,2,3,4\}$.
The space of real valued polynomial growth functions on ${\mathbb R}^{d}$ with polynomially bounded continuous derivatives up to $h$ is denoted by $C_{P}^{h}({\mathbb R}^{d})$, that is
$$
C_{P}^{h}({\mathbb R}^{d}):=\left\{f \in C^{h}({\mathbb R}^{d}) \,;\, 
\begin{array}{ll}
\text{There exist } r \in {\mathbb N} \cup \{0\} \text{ and a positive constant } C \text{ such that } \\
\displaystyle{\text{for any } y \in {\mathbb R}^{d}, \max_{\substack{i_{1},i_{2},\ldots,i_{k} \in \{1,2,\ldots,d\} \\ k \in \{1,2,\ldots,h\}}}\left|\frac{\partial^{k}f(y)}{\partial y^{i_{1}}\partial y^{i_{2}}\cdots\partial y^{i_{k}}}\right| \leq C\left(1+|y|^{2r}\right)}.
\end{array}
\right\}.
$$

\subsection{Outline}
\label{sec:1.4}
This article is divided as follows:
In Section \ref{sec:2}, we will introduce two Euler-Maruyama approximations based on the Haar system and the Walsh system for high dimensional SDEs.
Moreover, we will show that the 1st, 2nd and 3rd moments of these systems are the same as of the Gaussian system.
In Section \ref{sec:3}, we will state the error estimate for the weak convergence of the EM scheme in the general system.
This will imply that the EM schemes by the Haar system and the Walsh system have the same weak order $1$ of convergence as the standard EM scheme by the Gaussian system.
As the same time, our estimate 
suggests that 
the error 
grows very rapidly as the dimension gets higher  
in the Haar case, while 
the scheme with the Walsh system 
behaves far more nicely.
In Section \ref{sec:4}, to confirm that our schemes are more efficient than the standard EM scheme, we will perform some numerical experiments.
In Section \ref{sec:5}
(Appendix), we will prove the error estimate stated in Section \ref{sec:3} using the It\^o Taylor expansion (the Wagner-Platen expansion).

\section{Euler-Maruyama approximation for High Dimensional SDEs}
\label{sec:2}
\subsection{Simulation by mimicking random variables}
\label{sec:2.1}
In this section, we introduce two efficient algorithms for Euler-Maruyama approximations of a high dimensional SDE \eqref{eq:1.1} by orthogonal random variables after introducing the framework for the schemes to work on.

Let $n \in {\mathbb N}$ be the number of partitions of the closed interval $[0,T]$ and $(\Delta Z_{\ell}^{(n)})_{\ell=1}^{n}$ be $d$-dimensional i.i.d. random variables.
We consider the following Euler-Maruyama approximation of the equation \eqref{eq:1.1} given by $(\Delta Z_{\ell}^{(n)})_{\ell=1}^{n}$:
\begin{align}
\label{eq:2.1}
X_{0}^{(n)}=x_{0}, \quad X_{\ell T/n}^{(n)}=X_{(\ell-1)T/n}^{(n)}+\sigma(X_{(\ell-1)T/n}^{(n)})\Delta Z_{\ell}^{(n)}+b(X_{(\ell-1)T/n}^{(n)})\frac{T}{n}, \quad \ell \in \{1,2,\ldots,n\}.
\end{align}
Here, $0<T/n<2T/n<\cdots<nT/n=T$ form equal time steps on $[0,T]$. 
We assume that the system $(\Delta Z_{\ell}^{(n)})_{\ell=1}^{n}$ satisfies, for any $\ell \in \{1,2,\ldots,n\}$,
\begin{align}
\label{eq:2.2}
{\mathbb E}\left[(\Delta Z_{\ell}^{(n)})^{j_{1}}\right]=0, \quad \forall j_{1} \in \{1,2, \ldots, d\},
\end{align}
\begin{align}
\label{eq:2.3}
{\mathbb E}\left[(\Delta Z_{\ell}^{(n)})^{j_{1}}(\Delta Z_{\ell}^{(n)})^{j_{2}}\right]=\frac{T}{n}\delta_{j_{1},j_{2}}, \quad \forall j_{1}, j_{2} \in \{1,2, \ldots, d\}
\end{align}
and 
\begin{align}
\label{eq:2.4}
{\mathbb E}\left[(\Delta Z_{\ell}^{(n)})^{j_{1}}(\Delta Z_{\ell}^{(n)})^{j_{2}}(\Delta Z_{\ell}^{(n)})^{j_{3}}\right]=0, \quad \forall j_{1}, j_{2}, j_{3} \in \{1,2, \ldots, d\}. 
\end{align}
Then the Euler-Maruyama approximation of \eqref{eq:1.1} given by $(\Delta Z_{\ell}^{(n)})_{\ell=1}^{n}$ has weak order $1$ of convergence even if $(\Delta Z_{\ell}^{(n)})_{\ell=1}^{n}$ is not the Gaussian system (the increments of a Brownian motion), which will be made more precise and proven as a corollary to a more general theorem. 
Note that without \eqref{eq:2.3} we can only prove that it has weak order $1/2$ of convergence in Theorem \ref{theo:3.3} below.

Our objective in the following subsections is to find systems $(\Delta Z_{\ell}^{(n)})_{\ell=1}^{n}$ having the following conditions.
\begin{itemize}
\item $(\Delta Z_{\ell}^{(n)})_{\ell=1}^{n}$ satisfies the moment conditions \eqref{eq:2.2}, \eqref{eq:2.3} and \eqref{eq:2.4}.
\item The Euler-Maruyama approximation given by $(\Delta Z_{\ell}^{(n)})_{\ell=1}^{n}$ is more efficient than the standard Euler-Maruyama approximation by the Gaussian system in high dimensions.
Here, the efficiency is measured by the balance between accuracy and computation time.
\end{itemize}

Our strategy is to generate many random numbers out of a single uniform random number in order to reduce the time consumed to generate random numbers in high dimensions.
Below we introduce two distinct constructions, one from Haar functions and the other from Walsh functions. 
As was already discussed, the generating random variable is practically uniform on the set of integers, or, equivalently, on a binary set $\{0,1\}^{K}$, for some $K \in {\mathbb N}$.
Mathematically speaking, Propositions \ref{prop:2.1}
and \ref{prop:2.2} are not really new. Here we state them to stress and confirm that 
the third moments vanish, which is  important specifically in our schemes. 
The odd-ordered map of Theorem \ref{theo:2.4} is new though the proof is elementary. It gives a practical algorithm 
which seems to endorse the Walsh scheme to as competitive as the Haar scheme. 
See Section \ref{sec:4} for experimental results
to indicate how the algorithm saves
the computational time.

To the best of our knowledge, our schemes are new. 
The cubature method proposed by T. Lyons and his collaborators (\cite{LV}, see also \cite{Crisan} for recent developments) might be a nearest in that 
they use discrete random variables that
are not fully independent 
to mimic high-dimensional Brownian increments. 
They are also saving computational time by attaining a better convergence rate.

\subsection{Mimicking by the Haar system}
\label{sec:2.2}
Let $K \in {\mathbb N} $ with $d \leq 2^{K-1}$ and
\begin{align*}
h_{k}^{(K)}(x)=
\left\{
\begin{array}{ll}
2^{(K-1)/2} & x=2k-1 \\
-2^{(K-1)/2} & x=2k \\
0 & \text{otherwise},
\end{array}
\right. \quad x \in \{1,2,\ldots, 2^K\}, \quad k \in \{1,2,\ldots,2^{K-1}\}.
\end{align*}
\begin{Proposition}
\label{prop:2.1}
Define
\begin{align}
\label{eq:2.5}
(\Delta Z^{(n)})^{j}:=h_{j}^{(K)}(U)\sqrt[]{\frac{T}{n}}, \quad j \in \{1,2,\ldots,d\},
\end{align}
where $U$ is a random variable distributed uniformly over $\{1,2,\ldots, 2^K\}$. 
Then the $d$-dimensional random variable $\Delta Z^{(n)}$ satisfies \eqref{eq:2.2}, \eqref{eq:2.3} and \eqref{eq:2.4}. 
\end{Proposition}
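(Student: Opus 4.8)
The plan is to compute the first three moments of $\Delta Z^{(n)}$ directly from the definition \eqref{eq:2.5}, exploiting two structural features of the family $(h_{j}^{(K)})_{j}$: that distinct functions have disjoint supports, and that each individual function is antisymmetric across its two-point support. Since the factor $\sqrt{T/n}$ (resp.\ $T/n$, $(T/n)^{3/2}$) pulls out of each expectation, the whole problem reduces to computing moments of $h_{j}^{(K)}(U)$.

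First I would record the two elementary facts that drive everything. Because $U$ is uniform on $\{1,\ldots,2^{K}\}$, every point carries mass $2^{-K}$; and the support of $h_{j}^{(K)}$ is exactly the pair $\{2j-1,2j\}$, on which it takes the values $+2^{(K-1)/2}$ and $-2^{(K-1)/2}$. The key observation is that for $j_{1}\neq j_{2}$ the pairs $\{2j_{1}-1,2j_{1}\}$ and $\{2j_{2}-1,2j_{2}\}$ are disjoint, so the pointwise product $h_{j_{1}}^{(K)}(x)h_{j_{2}}^{(K)}(x)$ vanishes identically; a fortiori any product of three of these functions that are not all equal vanishes pointwise.

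With these in hand the three verifications are immediate. For \eqref{eq:2.2} I would evaluate ${\mathbb E}[h_{j}^{(K)}(U)] = 2^{-K}\bigl(2^{(K-1)/2} - 2^{(K-1)/2}\bigr) = 0$, the cancellation being exactly the antisymmetry. For \eqref{eq:2.3} the disjoint-support remark kills all off-diagonal terms, reducing matters to $j_{1}=j_{2}=j$, where ${\mathbb E}[(h_{j}^{(K)}(U))^{2}] = 2^{-K}\cdot 2\cdot 2^{K-1} = 1$, so after restoring the factor $T/n$ we recover $\frac{T}{n}\delta_{j_{1},j_{2}}$. For \eqref{eq:2.4} I would split into three cases (all three indices distinct, exactly two equal, all three equal); the disjoint-support remark disposes of the first two, while the all-equal case vanishes by antisymmetry since ${\mathbb E}[(h_{j}^{(K)}(U))^{3}] = 2^{-K}\bigl((2^{(K-1)/2})^{3} - (2^{(K-1)/2})^{3}\bigr) = 0$.

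There is no genuine obstacle here: the statement is elementary and reduces to a finite cancellation, so the hard part is really just organising the bookkeeping in the third-moment case split and checking that the normalisation $2^{(K-1)/2}$ paired with the uniform mass $2^{-K}$ produces a unit second moment. The role of the hypothesis $d\leq 2^{K-1}$ is to guarantee that every index $j\in\{1,\ldots,d\}$ lies in the admissible range $\{1,\ldots,2^{K-1}\}$, so that each $h_{j}^{(K)}$ is well defined and its support $\{2j-1,2j\}$ stays inside $\{1,\ldots,2^{K}\}$.
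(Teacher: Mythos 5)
Your proof is correct and follows essentially the same route as the paper's: both arguments reduce to the facts that distinct $h_{j}^{(K)}$ have disjoint two-point supports (so mixed products vanish pointwise) and that each $h_{j}^{(K)}$ is antisymmetric on its support (so odd pure moments vanish). The only difference is organisational --- the paper packages all cases into a single formula for ${\mathbb E}\bigl[\prod_{k=1}^{p}h_{j_{k}}^{(K)}(U)\bigr]$ valid for every $p$, whereas you verify $p=1,2,3$ separately --- but the substance is identical.
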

\begin{proof}
Let $p \in {\mathbb N}$ and $j_{1},j_{2},\ldots,j_{p} \in \{1,2,\ldots,d\}$.
We obtain
\begin{align}
\label{eq:2.6}
{\mathbb E}\left[\prod_{k=1}^{p}h_{j_{k}}^{(K)}(U)\right]
&=\sum_{x=1}^{2^{K}}\left(\prod_{k=1}^{p}h_{j_{k}}^{(K)}(x)\right)\mathbb{P}(U=x) \\ 
&=\sum_{x=1}^{2^{K}}\left(\prod_{k=1}^{p}\left({2^{(K-1)/2}\bf 1}_{\{2j_{k}-1\}}(x)-{2^{(K-1)/2}\bf 1}_{\{2j_{k}\}}(x)\right)\right)\frac{1}{2^{K}} \notag \\
&=
\left\{
\begin{array}{ll}
\displaystyle{2^{p(K-1)/2-K}\sum_{x=1}^{2^{K}}\left({\bf 1}_{\{2j_{1}-1\}}(x)-{\bf 1}_{\{2j_{1}\}}(x)\right)^{p}} & j_{1}=j_{2}=\cdots=j_{p} \\
0 & \text{otherwise}
\end{array}
\right. \notag \\
&=
\left\{
\begin{array}{ll}
\displaystyle{2^{(p-2)(K-1)/2}} & p \text{ is even and } j_{1}=j_{2}=\cdots=j_{p} \\
0 & \text{otherwise}.
\end{array}
\right. \notag
\end{align}
Equation \eqref{eq:2.6} implies \eqref{eq:2.2}, \eqref{eq:2.3} and \eqref{eq:2.4}.
\end{proof}

\subsection{Mimicking by the Walsh system}
\label{sec:2.3}
Let $K \in {\mathbb N}$ with $d \leq 2^{K-1}$.
We will denote by $\tau=(\tau_{1},\tau_{2},\ldots,\tau_{K})$ an element of the finite product set $\{-1,1\}^{K}$ of a two-point set $\{-1,1\} \subset {\mathbb R}$.
Endowed with the uniform distribution on $\{-1,1\}^{K}$, 
the coordinate maps $\tau \mapsto \tau_{i}$, $i \in \{1,2,\ldots,N\}$, regarded as random variables, 
are mutually independent and  identically distributed as
${\mathbb P}(\tau_{i}=\pm 1)=1/2$, $i \in \{1,2,\ldots,K\}$.

For non empty $ S \subset \{1,2, \cdots, K\} $, define 
\begin{equation*}
    \begin{split}
        \tau_S := \prod_{i \in S} \tau_i,
    \end{split}
\end{equation*}
and set $ \tau_\emptyset \equiv 1 $, that is, $ \tau _\emptyset (\omega) = 1 $
for all $ \omega \in \{-1,1\}^K $. 
Note that 
$ \tau_S $ for $ \emptyset \ne S \subset \{1, \cdots, K \} $  are all identically distributed. Further, if
$ S $ and $ S' $ are distinct,  
$\tau_{S}$ and $\tau_{S^{\prime}}$ are
not always independent but 
orthogonal (their covariance is $0$, i.e., their correlation is $0$).
Indeed, we obtain
$$
\tau_{S}\tau_{S^{\prime}}=\tau_{S \ominus S^{\prime}}\tau_{S \cap S^{\prime}}^{2}=\tau_{S \ominus S^{\prime}},
$$
where the symbol $\ominus $ is the symmetric difference of two sets. 
Then by the independence, we have
\begin{align}
\label{eq:2.7}
{\mathbb E}\left[\tau_{S}\tau_{S^{\prime}}\right]
=\prod_{j \in S \ominus S^{\prime}}{\mathbb E}\left[\tau_{j}\right]
=0.
\end{align}
Thus the system ${\mathscr W}_{K}:=\{\tau_{S}:=\prod_{j \in S}\tau_{j} \,;\, S \subset \{1,2,\ldots,K\}\}$ forms an orthonormal basis of the space of all functions on $\{-1,1\}^{K}$ endowed with
the scalar product induced by the uniform distribution
since $ \sharp {\mathscr W}_{K} =2^K = \dim \mathbb{R}^{\{-1,1\}^K} $.
\begin{Remark}
We can embed 
$\cup_{K \in {\mathbb N}}{\mathscr W}_{K}$ 
into $ L^2 ([0,1), \mathfrak{B} [0,1), \mathrm{Leb}) $, the space of
Borel measurable functions 
which are square integrable 
with respect to the Lebesgue measure, by
redefining $ \tau_i, i \in \mathbb{N} $, by
\begin{equation*}
    \tau_i (x) = \begin{cases}
    +1 & x \in \cup_{k=0}^{i-1} [\frac{2k}{2^i}, \frac{2k+1}{2^i}) \\
    -1 & \text{otherwise}
    \end{cases}
    , \quad x \in [0,1).
\end{equation*}
One can show that thus embedded
$\cup_{K \in {\mathbb N}}{\mathscr W}_{K}$ forms a complete orthonormal system of $L^{2}[0,1) $, which is often referred to as the {\em Walsh system} (\cite{Walsh}, \cite{Fine}).
\end{Remark}

To mimic the Brownian increments, we only use {\em odd} members of ${\mathscr W}_{K}$.

\begin{Proposition}
\label{prop:2.2}
Let $\varphi \equiv \varphi^{(K)}: \{1,2,\ldots,2^{K-1}\} \to {\mathcal O}_{K}:=\{\tau_{S} \,;\, S \subset \{1,2,\ldots,K\}, \sharp S \text{ is odd}\}$ be a bijection, and set 
\begin{align}
\label{eq:2.8}
(\Delta Z^{(n)})^{j}:=\varphi(j)\sqrt[]{\frac{T}{n}}, \quad j=1,2,\ldots,d.
\end{align}
Then the $d$-dimensional random variable $\Delta Z^{(n)}$ satisfies \eqref{eq:2.2}, \eqref{eq:2.3} and \eqref{eq:2.4}.
\end{Proposition}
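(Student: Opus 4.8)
The plan is to compute the general mixed moment ${\mathbb E}[\prod_{k=1}^{p}\varphi(j_{k})]$ once and for all and then read off the three required identities as the cases $p=1,2,3$. Write $\varphi(j)=\tau_{S_{j}}$, where $S_{j}\subset\{1,2,\ldots,K\}$ is the odd-cardinality set assigned to $j$ by the bijection $\varphi$; since $\varphi$ is a bijection, the map $j\mapsto S_{j}$ is injective, and every $S_{j}$ satisfies $\sharp S_{j}$ odd, in particular $S_{j}\neq\emptyset$.

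First I would record the two elementary facts about the Rademacher variables $\tau_{1},\ldots,\tau_{K}$ that drive everything. Using $\tau_{i}^{2}=1$ and collecting each index $i$ with its multiplicity (reduced modulo $2$), the two-factor identity $\tau_{S}\tau_{S'}=\tau_{S\ominus S'}$ from \eqref{eq:2.7} extends to the $p$-factor product rule $\prod_{k=1}^{p}\tau_{S_{k}}=\tau_{S_{1}\ominus\cdots\ominus S_{p}}$. Combined with the expectation rule ${\mathbb E}[\tau_{S}]=1$ if $S=\emptyset$ and $0$ otherwise (which follows from independence and ${\mathbb E}[\tau_{i}]=0$, exactly as in \eqref{eq:2.7}), this yields ${\mathbb E}[\prod_{k=1}^{p}\tau_{S_{k}}]=1$ when $S_{1}\ominus\cdots\ominus S_{p}=\emptyset$ and $0$ otherwise. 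The decisive auxiliary observation is the parity congruence $\sharp(S_{1}\ominus\cdots\ominus S_{p})\equiv\sharp S_{1}+\cdots+\sharp S_{p}\pmod 2$, valid because an index lies in the symmetric difference precisely when it belongs to an odd number of the $S_{k}$.

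Applying this to each identity (after factoring out the appropriate power of $\sqrt{T/n}$) is then immediate. For \eqref{eq:2.2}, $p=1$ and $\sharp S_{j_{1}}$ is odd, so $S_{j_{1}}\neq\emptyset$ and the expectation vanishes. For \eqref{eq:2.3}, $p=2$: if $j_{1}=j_{2}$ then $S_{j_{1}}\ominus S_{j_{2}}=\emptyset$ and the expectation is $1$, producing the factor $T/n$; if $j_{1}\neq j_{2}$ then $S_{j_{1}}\neq S_{j_{2}}$ by injectivity, the symmetric difference is nonempty, and the expectation is $0$ --- precisely $\tfrac{T}{n}\delta_{j_{1},j_{2}}$. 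For \eqref{eq:2.4}, $p=3$: the three cardinalities are each odd, so their sum is odd, hence $\sharp(S_{j_{1}}\ominus S_{j_{2}}\ominus S_{j_{3}})$ is odd and the set is nonempty, giving expectation $0$.

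The calculation is elementary, so there is no serious obstacle; the single point worth stressing --- and the very reason the construction uses only the odd members ${\mathcal O}_{K}$ --- is the parity argument behind the third moment. It is exactly because all cardinalities are odd that the symmetric difference of any three of them is again odd, hence nonempty, forcing \eqref{eq:2.4} to hold for every choice of indices. If even-cardinality members were permitted, three of them (for instance $\tau_{S}$, $\tau_{S}$, and $\tau_{\emptyset}\equiv 1$) could combine to $\tau_{\emptyset}$ with expectation $1$, and the third moment would fail to vanish.
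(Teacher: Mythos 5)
Your proof is correct and follows essentially the same route as the paper's: both rest on the orthogonality ${\mathbb E}[\tau_{S}]=\delta_{S,\emptyset}$ coming from independence, together with the parity observation that symmetric differences of odd-cardinality sets cannot vanish when an odd number of factors is involved (the paper phrases this as $\sharp(S_{1}\ominus S_{2})$ being even while $\sharp S_{3}$ is odd, which is the $p=3$ case of your congruence $\sharp(S_{1}\ominus\cdots\ominus S_{p})\equiv\sum_{k}\sharp S_{k}\pmod 2$). Your unified treatment of all three moments via the general product rule is a mild repackaging --- closer in spirit to the paper's Remark \ref{rem:2.1} on higher moments --- but the underlying argument is identical.
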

\begin{proof}
\eqref{eq:2.2} is clear by the independence, and \eqref{eq:2.3} also clear by \eqref{eq:2.7}.
Let $j_{1}$, $j_{2}$, $j_{3} \in \{1,2,\cdots,d\}$.
Set $\tau_{S_{1}}:=\varphi(j_{1})$, $\tau_{S_{2}}:=\varphi(j_{2})$ and $\tau_{S_{3}}:=\varphi(j_{3})$ for each corresponding $S \subset \{1,2,\ldots,K\}$ such that $\sharp S$ is odd.
Then we obtain
\begin{align*}
\varphi(i_{1})\varphi(i_{2})\varphi(i_{3})
=\tau_{S_{1} \ominus S_{2}}\tau_{S_{1} \cap S_{2}}^{2}\tau_{S_{3}}
=\tau_{S_{1} \ominus S_{2}} \tau_{S_{3}}.
\end{align*}
On the other hand, $S_{1} \ominus S_{2}$ is never equal to $S_{3}$ since
$$
\sharp(S_{1} \ominus S_{2})=\sharp S_{1}+\sharp S_{2}-2 \sharp (S_{1} \cap S_{2})
$$
is even and $\sharp S_{3}$ is odd.
Hence \eqref{eq:2.4} holds by \eqref{eq:2.7}.
\end{proof}

\begin{Remark}
\label{rem:2.1}
We can also obtain the higher moments by considering the atom.
For $S \subset \{1,2,\ldots,K\}$, we set $S^{1}:=S$ and $S^{-1}:=S^{c} \equiv \{1,2,\ldots,K\} \setminus S$.
Let $p \in {\mathbb N}$.
Then we obtain
\begin{align*}
\prod_{k=1}^{p}\tau_{S_{k}}
=\prod_{(i_{1},i_{2},\ldots,i_{p}) \in \{-1,1\}^{p}}\tau_{\cap_{k=1}^{p}S_{k}^{i_{k}}}^{\sharp\{i_{k}\,;\,i_{k}=1\}}
=\prod_{\substack{(i_{1},i_{2},\ldots,i_{p}) \in \{-1,1\}^{p} \\ \sharp\{i_{k}\,;\,i_{k}=1\} \text{ is odd}}}\tau_{\cap_{k=1}^{p}S_{k}^{i_{k}}}^{\sharp\{i_{k}\,;\,i_{k}=1\}},
\end{align*}
where $\sharp\{i_{k}\,;\,i_{k}=1\}=\sum_{k=1}^{p}{\bf 1}_{\{1\}}(i_{k})$ and $\tau_{\emptyset} \equiv 1$.
Thus by the independence, we have
\begin{align*}
{\mathbb E}\left[\prod_{k=1}^{p}\tau_{S_{k}}\right]
&=\prod_{\substack{(i_{1},i_{2},\ldots,i_{p}) \in \{-1,1\}^{p} \\ \sharp\{i_{k}\,;\,i_{k}=1\} \text{ is odd}}}{\mathbb E}\left[\tau_{\cap_{k=1}^{p}S_{k}^{i_{k}}}^{\sharp\{i_{k}\,;\,i_{k}=1\}}\right]
=\prod_{\substack{(i_{1},i_{2},\ldots,i_{p}) \in \{-1,1\}^{p} \\ \sharp\{i_{k}\,;\,i_{k}=1\} \text{ is odd}}}\prod_{l \in \cap_{k=1}^{p}S_{k}^{i_{k}}}{\mathbb E}\left[\tau_{l}^{\sharp\{i_{k}\,;\,i_{k}=1\}}\right] \\
&=
\left\{
\begin{array}{ll}
1 & \displaystyle{\bigcup_{\substack{(i_{1},i_{2},\ldots,i_{p}) \in \{-1,1\}^{p} \\ \sharp\{i_{k}\,;\,i_{k}=1\} \text{ is odd}}}\bigcap_{k=1}^{p}S_{k}^{i_{k}}=\emptyset} \\ 
0 & \text{otherwise}.
\end{array}
\right.
\end{align*}
\end{Remark}

As a practical scheme, the bijection $\varphi$ should be algorithmically efficient in some sense, which we formulate mathematically as follows: Let
$$
{\mathcal O}_{k}:=\{\tau_{S} \,;\, S \subset \{1,2,\ldots,k\}, \sharp S \text{ is odd}\}, \quad k \in \{1,2,\ldots,K\}.
$$
\begin{Definition}
\label{def:2.3}
A map $\varphi: \{1,2,\ldots,2^{K-1}\} \to {\mathcal O}_{K}$ is called {\bf odd-ordered} if it satisfies $\varphi(\{1,2,\ldots,2^{k-1}\})={\mathcal O}_{k}$ for any $k \in \{1,2,\ldots,K\}$.
\end{Definition}
Note that this definition implies that an odd-ordered map is bijective. 
Now we give an explicit odd-ordered map. 
We inductively define a map $\varphi$ as follows:
\begin{align}
\label{odd-ordered map}
\varphi(k):= 
\left\{
\begin{array}{ll}
\tau_{1} & k=1 \\
\varphi(k-1)\tau_{1}\tau_{\theta(k)} & k \in \{2,3,\ldots,2^{K-1}\},
\end{array}
\right.
\end{align}
where
$$
\theta(k)= 
\left\{
\begin{array}{ll}
2 & k \text{ is even} \\
\max\{l \in {\mathbb N} \,;\, (k-1)2^{-l} \in {\mathbb N}\}+2 & k \text{ is odd}.
\end{array}
\right.
$$
Note that this map $\varphi$ can be regarded as a Gray code (\cite{Gray}), where each word differs from the next one in only one digit (each word has a Hamming distance of $1$ from the next word).
In Section \ref{sec:4}, we compare 
the computational time between 
a simpler algorithm and the algorithm based on this map $ \varphi$. The results show that in higher dimensions the latter one saves the computation time significantly.
\begin{Example}
\label{ex:2.1}
We inductively obtain
$\varphi(1)=\tau_{1}$, $\varphi(2)=\varphi(1)\tau_{1}\tau_{2}=\tau_{2}$, $\varphi(3)=\varphi(2)\tau_{1}\tau_{3}=\tau_{1}\tau_{2}\tau_{3}$, $\varphi(4)=\varphi(3)\tau_{1}\tau_{2}=\tau_{3}$ and so on.
\end{Example}

\begin{Theorem}
\label{theo:2.4}
The map $\varphi$ given above is odd-ordered.
\end{Theorem}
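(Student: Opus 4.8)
The plan is to translate the multiplicative recursion \eqref{odd-ordered map} into an additive (symmetric-difference) recursion on index sets, to recognize the resulting sequence as a binary reflected Gray code, and to establish the defining property of Definition \ref{def:2.3} by induction on $k$, exploiting the self-similar (reflection) structure of that code.

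First I would write $\varphi(k)=\tau_{S_{k}}$ for subsets $S_{k}\subset\{1,\ldots,K\}$. Since the multiplication rule gives $\tau_{S}\tau_{S'}=\tau_{S\ominus S'}$ and $\theta(k)\geq 2$, the recursion \eqref{odd-ordered map} becomes $S_{1}=\{1\}$ and $S_{k}=S_{k-1}\ominus\{1,\theta(k)\}$ for $k\geq 2$; each step toggles membership of exactly the two indices $1$ and $\theta(k)$, so the parity of $\sharp S_{k}$ is preserved and every $S_{k}$ is odd, giving $\varphi(\{1,\ldots,2^{K-1}\})\subset{\mathcal O}_{K}$ at once. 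I would then unify the two cases of $\theta$ by the single formula $\theta(k)=v(k-1)+2$ for $k\geq 2$, where $v(m):=\max\{l\in{\mathbb N}\cup\{0\}:m2^{-l}\in{\mathbb N}\}$ is the $2$-adic valuation (the even and odd cases of $\theta$ coincide because $v(m)=0$ for odd $m$). Next I record a parity bijection: an odd set $S$ is determined by its trace $T:=S\cap\{2,\ldots,K\}$ together with the rule ``$1\in S$ iff $\sharp T$ is even'', so $S\mapsto T$ carries ${\mathcal O}_{k}$ bijectively onto the power set of $\{2,\ldots,k\}$. Under this bijection toggling $\{1,\theta(k)\}$ (with $\theta(k)\in\{2,\ldots,K\}$) is just toggling the single coordinate $\theta(k)$ of the trace, so the goal $\varphi(\{1,\ldots,2^{k-1}\})={\mathcal O}_{k}$ is equivalent to the statement that the traces $T_{1},\ldots,T_{2^{k-1}}$, defined by $T_{1}=\emptyset$ and $T_{k}=T_{k-1}\ominus\{\theta(k)\}$, exhaust the power set of $\{2,\ldots,k\}$.

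I would prove this trace statement by induction on $k$, strengthening it to record also the endpoint $T_{2^{k-1}}=\{k\}$ for $k\geq 2$ (with $T_{1}=\emptyset$); the cases $k=1,2$ follow from Example \ref{ex:2.1}. For the step $k\to k+1$ with $k\geq 2$, the engine is the elementary identity $v(2^{k-1})=k-1$ together with $v(2^{k-1}+i)=v(i)$ for $1\leq i\leq 2^{k-1}-1$. The first part shows that the transition at index $2^{k-1}+1$ toggles the new coordinate $k+1$, whence $T_{2^{k-1}+1}=\{k\}\ominus\{k+1\}=\{k,k+1\}$; the second part gives $\theta(2^{k-1}+1+i)=\theta(1+i)=v(i)+2\in\{2,\ldots,k\}$ for $1\leq i\leq 2^{k-1}-1$, so the second block repeats, index by index, the toggles of the first. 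Writing $a_{i}:=T_{1+i}$ and $b_{i}:=T_{2^{k-1}+1+i}$ and cancelling the common cumulative toggles yields $b_{i}=\{k,k+1\}\ominus a_{i}=(\{k\}\ominus a_{i})\cup\{k+1\}$ for $0\leq i\leq 2^{k-1}-1$. Since the inductive hypothesis makes $i\mapsto a_{i}$ a bijection onto the power set of $\{2,\ldots,k\}$ and symmetric difference with $\{k\}$ permutes that power set, the second block is a bijection onto the subsets of $\{2,\ldots,k+1\}$ containing $k+1$, while the first block gives those omitting it; together they exhaust the power set of $\{2,\ldots,k+1\}$. Taking $i=2^{k-1}-1$ gives the new endpoint $T_{2^{k}}=\{k+1\}$, closing the induction, and translating back through the parity bijection delivers $\varphi(\{1,\ldots,2^{k-1}\})={\mathcal O}_{k}$ for every $k\in\{1,\ldots,K\}$.

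The main obstacle is the inductive step, specifically making the reflection argument airtight: one must verify both that coordinate $k+1$ is toggled exactly once in the second block (at its very first step) and never afterwards, and that the remaining toggles replay the first block's toggles in order, so that the second block is a faithful ``$\{k+1\}$-shifted copy'' of the first. Both points rest entirely on the $2$-adic identities $v(2^{k-1})=k-1$ and $v(2^{k-1}+i)=v(i)$ for $1\leq i<2^{k-1}$ (the latter because $v(i)\leq k-2<k-1$ forces the valuation of the sum to equal $v(i)$); once this is isolated, the remainder is bookkeeping with symmetric differences. The parity bijection of the second paragraph is the other place where care is needed, since it is what turns the two-coordinate toggle $\{1,\theta(k)\}$ into the single-coordinate toggle that makes the Gray-code structure visible.
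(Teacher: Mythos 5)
Your proof is correct, and it takes a genuinely different route from the paper's, though both ultimately rest on the same reflected-Gray-code self-similarity. The paper goes \emph{up}: it introduces an auxiliary double-length map $\psi:\{1,\ldots,2^{K}\}\to\{\tau_{S}\,;\,S\subset\{1,\ldots,K\}\}$ (the full Gray code that toggles one coordinate per step, including coordinate $1$), proves the identification $\psi(2k)=\varphi(k)$ and the reflection identity $\psi(2^{l-1}+k)=\tau_{l}\tau_{l-1}\psi(k)$, and then deduces bijectivity and odd-orderedness of $\varphi$ by induction on blocks. You instead go \emph{down}: your parity/trace bijection ${\mathcal O}_{k}\ni S\mapsto S\cap\{2,\ldots,k\}$ quotients out coordinate $1$ entirely, converting the two-coordinate toggle $\{1,\theta(k)\}$ into a single-coordinate toggle on the power set of $\{2,\ldots,k\}$, and then you run the reflection induction directly on the half-length trace sequence, with the $2$-adic identities $v(2^{k-1})=k-1$ and $v(2^{k-1}+i)=v(i)$ playing exactly the role of the paper's computation of $\eta(2^{l-1}+k)$, and your strengthened endpoint hypothesis $T_{2^{k-1}}=\{k\}$ mirroring the paper's $\psi(2^{l-1})=\tau_{l-1}$. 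What each buys: the paper's embedding makes explicit that $\varphi$ is the even-indexed subsequence of the standard binary reflected Gray code, which is conceptually illuminating; your projection avoids any auxiliary double-length object, keeps every sequence at length $2^{K-1}$, and isolates the parity mechanism (why restricting to odd sets costs nothing) as a clean standalone bijection, arguably yielding a more economical and self-contained argument. All the delicate points you flag (the single toggle of the new coordinate $k+1$ at the start of the second block, its absence thereafter since $\theta(1+i)\leq k$, and the cancellation $b_{i}=\{k,k+1\}\ominus a_{i}$) check out.
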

\begin{proof}
We inductively define a map $\psi:\{1,2,\ldots,2^{K}\} \to \{\tau_{S}; S \subset \{1,2,\ldots,K\}\}$ as follows:
$$
\psi(k):=
\left\{
\begin{array}{ll}
1 & k=1 \\
\psi(k-1)\tau_{\eta(k)} & k \in \{2,3,\ldots,2^{K}\},
\end{array}
\right.
$$
where
$$
\eta(k)= 
\left\{
\begin{array}{ll}
1 & k \text{ is even} \\
\max\{l \in {\mathbb N} \,;\, (k-1)2^{-l} \in {\mathbb N}\}+1 & k \text{ is odd}.
\end{array}
\right.
$$
Observe first that, for $k \in \{2,3,\ldots,2^{K-1}\}$, we obtain
$$
\psi(2k)=\psi(2k-1)\tau_{1}=\psi(2k-2)\tau_{1}\tau_{\eta(2k-1)}
$$
and
\begin{align*}
\eta(2k-1)&=\max\{l \in {\mathbb N} \,;\, (k-1)2^{-l+1} \in {\mathbb N}\}+1 \\
&=\max\{l \in {\mathbb N} \,;\, (k-1)2^{-l} \in {\mathbb N}\}+2 \\
&=\theta(k).
\end{align*}
Thus we inductively have
\begin{align}
\label{eq:2.9}
\psi(2k)=\varphi(k), \quad k \in \{1,2,\ldots,2^{K-1}\}.
\end{align}

Next, we see that, 
for $l \in \{2,3,\ldots,K\}$ and $k \in \{1,3,\ldots,2^{l-1}-1\}$,  
\begin{align*}
\eta(2^{l-1}+k)&=
\left\{
\begin{array}{ll}
\max\{l' \in {\mathbb N} \,;\, 2^{(l-l')-1} \in {\mathbb N}\}+1 & k=1 \\
\max\{l' \in {\mathbb N} \,;\, (2^{l-1}+k-1)2^{-l'} \in {\mathbb N}\}+1 & k \geq 3
\end{array}
\right. \\
&=
\left\{
\begin{array}{ll}
l & k=1 \\
\max\{l' \in {\mathbb N} \,;\, (k-1)2^{-l'} \in {\mathbb N}\}+1 & k \geq 3.
\end{array}
\right.
\end{align*}
This implies that
$$
\eta(2^{l-1}+k)=
\left\{
\begin{array}{ll}
l & k=1 \\
\eta(k) &  k \in \{2,3,\ldots,2^{l-1}\},
\end{array}
\right. \quad l \in \{2,3,\ldots,K\}.
$$
Hence for $l \in \{3,4,\ldots,K\}$ and $k \in \{1,2,\ldots,2^{l-1}\}$, we obtain
\begin{align*}
\psi(2^{l-1}+k)&=\psi(2^{l-1}+k-1)\tau_{\eta(2^{l-1}+k)}
=\psi(2^{l-1}+k-1)\tau_{\eta(k)} \\
&=\psi(2^{l-1}+k-2)\tau_{\eta(k)}\tau_{\eta(k-1)}
=\cdots \\
&=\psi(2^{l-1})\tau_{\eta(k)}\tau_{\eta(k-1)} \cdots \tau_{\eta(2)}\tau_{l}
=\psi(2^{l-1})\psi(k)\tau_{l}.
\end{align*}
Moreover, 
$$
\psi(2^{l-1})=\psi(2^{(l-1)-1}+2^{l-2})=\psi(2^{l-2})\psi(2^{l-2})\tau_{l-1}=\tau_{l-1}.
$$
Thus we have
\begin{align}
\label{eq:2.10}
\psi(2^{l-1}+k)=\tau_{l}\tau_{l-1}\psi(k), \quad l \in \{2,3,\ldots,K\}.
\end{align}
Consequently, we can inductively prove that the restricted map $\psi|_{\{1,2,\ldots,2^{l}\}}: \{1,2,\ldots,2^{l}\} \to \{\tau_{S} \,;\, S \subset \{1,2,\cdots,l\}\}$ is bijective by \eqref{eq:2.10}.
Furthermore, we can inductively show
$$
\{\psi(2k) \,;\, k \in \{1,2,\cdots,2^{k-1}\}\}={\mathcal O}_{k}.
$$
This implies that $\varphi$ is odd-ordered by \eqref{eq:2.9}.
\end{proof}

\section{Error Estimates Depending on Dimension}
\label{sec:3}
In this section, we discuss the error estimate for the weak convergence of the Euler Maruyama approximation given by $(\Delta Z_{\ell}^{(n)})_{\ell=1}^{n}$.
In particular, we will find the weak order of convergence with respect to the number of time steps.
This will imply that the EM schemes based on the Haar system \eqref{eq:2.5} and the Walsh system \eqref{eq:2.8}  have the same weak order $1$ of convergence as the standard EM scheme based on the Gaussian system.
We assume that the coefficients $\sigma$ and $b$ of the $d$-dimensional SDE ${\rm d}X_{t}=\sigma(X_{t}){\rm d}W_{t}+b(X_{t}){\rm d}t$, $t \geq 0$ satisfy the following assumption.

\begin{Assumption}
\label{ass:3.1}
The coefficients $\sigma$ and $b$ satisfy the following conditions.
\begin{itemize}
\item[{\bf A1-1.}] There exists a positive constant $C$ such that for any $x,y \in {\mathbb R}^{d}$, $|\sigma(y)-\sigma(x)| \vee |b(y)-b(x)| \leq C|y-x|$.
\item[{\bf A1-2.}] There exists a positive constant $C$ such that for any $y \in {\mathbb R}^{d}$, $|b(y)| \vee |\sigma(y)| \leq C(1+|y|)$.
\item[{\bf A1-3.}] For any $i,j \in \{1,2,\ldots,d\}$, $\sigma_{j}^{i}, b^{i} \in C_{P}^{4}({\mathbb R}^{d})$.
\end{itemize}
\end{Assumption}

Let $(X,W)$ be a solution of the $d$-dimensional SDE \eqref{eq:1.1} with the initial $X_{0} \equiv x_{0} \in {\mathbb R}^{d}$ on a complete probability space $(\Omega,{\mathcal F},{\mathbb P})$ with a filtration $({\mathcal F}_{t})_{t \geq 0}$.
Let $T>0$, $n \in {\mathbb N}$ be the number of partitions of the closed interval $[0,T]$ and $t_{\ell}:=\ell T/n$, $\ell \in \{1,2,\ldots,n\}$ be the equidistant
grid generated by equal time steps
on $[0,T]$.
The flow associated with the SDE is defined on the same filtered probability space since the Lipschitz condition {\bf A1-1} provides a unique strong solution to the SDE.
Note that the condition {\bf A1-1} is only used to guarantee a unique strong solution and not in the discussion about the error estimate.
We consider the Euler-Maruyama approximation $X^{(n)}$ of the equation \eqref{eq:1.1} given by $d$-dimensional random variables $(\Delta Z_{\ell}^{(n)})_{\ell=1}^{n}$ which satisfy the following assumption.

\begin{Assumption}
\label{ass:3.2}
The $d$-dimensional random variables $(\Delta Z_{\ell}^{(n)})_{\ell=1}^{n}$ satisfy the following conditions.
\begin{itemize}
\item[{\bf A2-1.}] $(\Delta Z_{\ell}^{(n)})_{\ell=1}^{n}$ is $({\mathcal F}_{t_{\ell}})_{\ell=1}^{n}$-adapted.
\item[{\bf A2-2.}] For any $\ell \in \{1,2,\ldots,n\}$, $\Delta Z_{\ell}^{(n)}$ and ${\mathcal F}_{t_{\ell-1}}$ are independent.
\item[{\bf A2-3.}] For any $j_{1},j_{2},j_{3} \in \{1,2,\ldots,d\}$ and $\ell \in \{1,2,\ldots,n\}$,
$$
{\mathbb E}\left[(\Delta Z_{\ell}^{(n)})^{j_{1}}\right]=0, \quad {\mathbb E}\left[(\Delta Z_{\ell}^{(n)})^{j_{1}}(\Delta Z_{\ell}^{(n)})^{j_{2}}\right]=\frac{T}{n}\delta_{j_{1},j_{2}}, \quad {\mathbb E}\left[(\Delta Z_{\ell}^{(n)})^{j_{1}}(\Delta Z_{\ell}^{(n)})^{j_{2}}(\Delta Z_{\ell}^{(n)})^{j_{3}}\right]=0.
$$
\end{itemize}
\end{Assumption}

Obviously, the Gaussian system satisfies Assumption \ref{ass:3.2}.
We see that the Haar system \eqref{eq:2.5} and the Walsh system \eqref{eq:2.8} also satisfy it from Proposition \ref{prop:2.1} and \ref{prop:2.2} in Section \ref{sec:2}.
Also note that we do not assume the independence for each component of the vector $\Delta Z_{\ell}^{(n)}$ here.
That is, we do not know whether $(\Delta Z_{\ell}^{(n)})^{1}, (\Delta Z_{\ell}^{(n)})^{2}, \ldots, (\Delta Z_{\ell}^{(n)})^{d}$ are independent or not under Assumption \ref{ass:3.2}.

Moreover, we set for any $p \in {\mathbb N} \cup \{0\}$,
$$
M_{p}^{(n)}(Z):=\max_{\ell \in \{1,2,\ldots,n\}}{\mathbb E}\left[\left|\Delta Z_{\ell}^{(n)}\right|^{p}\right],
$$
where $|x|=(\sum_{i=1}^{d}|x^{i}|^{2})^{1/2}$, $x \in {\mathbb R}^{d}$.

Under these assumptions, we can get the following error estimate, where we can see the difference of the bounds of the error with different choices of $(\Delta Z_{\ell}^{(n)})_{\ell=1}^{n}$.

\begin{Theorem}
\label{theo:3.3}
Suppose that Assumption \ref{ass:3.1} holds.
Then for any $f \in C_{P}^{4}({\mathbb R}^{d})$, 
we have the following estimates.
(i) For any $ (\Delta Z^{(n)}_\ell)_{\ell=1}^n $ which satisfies Assumption \ref{ass:3.2}, 
there exists a positive constant $C_1$
that is independent of $ n $ 
and $ (\Delta Z^{(n)}_\ell)_{\ell=1}^n $ such that 
$$
\left|{\mathbb E}[f(X_{T})]-{\mathbb E}[f(X_{T}^{(n)})]\right| \leq C_{r}^{(n)}(Z)\left(nM_{8}^{(n)}(Z)^{1/2}+\frac{1}{n}\right),
$$
where
$$
C_{r}^{(n)}(Z)=C_1\left(1+M_{4r}^{(n)}(Z)^{1/2}\right)\exp\left\{C_1\left(1+n\left(M_{2}^{(n)}(Z) \vee M_{2(r+2)}^{(n)}(Z) \vee M_{2(4r+3)}^{(n)}(Z)\right)\right)\right\},
$$
$$
r:=\min\left\{r \in {\mathbb N}\cup \{0\} \,;\, 
\begin{array}{ll}
\text{There exists a positive constant } C \text{ such that for any } y \in {\mathbb R}^{d}, \\
\displaystyle{\max_{\substack{i_{1},i_{2},\ldots,i_{k} \in \{1,2,\ldots,d\} \\ k \in \{1,2,3,4\}}}\left|\frac{\partial^{k}f(y)}{\partial y^{i_{1}}\partial y^{i_{2}}\cdots\partial y^{i_{k}}}\right|} \\
\displaystyle{\vee \max_{\substack{i \in \{1,2,\ldots,d\} \\ j \in \{0,1,\ldots,d\}}}\max_{\substack{i_{1},i_{2},\ldots,i_{k} \in \{1,2,\ldots,d\} \\ k \in \{1,2\}}}\left|\frac{\partial^{k}\sigma_{j}^{i}(y)}{\partial y^{i_{1}}\partial y^{i_{2}}\cdots\partial y^{i_{k}}}\right| \leq C\left(1+|y|^{2r}\right)}.
\end{array}
\right\}.
$$
and $\sigma_{0}:=b$.
(ii) If $ (\Delta Z^{(n)}_\ell), \ell=1,2, \ldots, d $ 
are the increment of a $d$-dimensional Brownian motion, then, 
if $ n \geq (8r+5) T d $, 
\begin{equation}\label{boundGauss}
\left|{\mathbb E}[f(X_{T})]-{\mathbb E}[f(X_{T}^{(n)})]\right| \leq 2C_1 e^{C_1 (1+dT)} (1+ T^2 K_{d}){n^{-1}},
\end{equation}
where $ K_d $ is a constant depending only on $ d $, which satisfies
\begin{equation*}
   \sqrt{ d (d+1)(d+2)(d+3)} \leq K_d \leq 
    d^2 \sqrt{105}.  
\end{equation*}
(iii) If $ (\Delta Z^{(n)}_\ell), \ell=1,2, \ldots, d $ 
are independent copies of the Haar system \eqref{eq:2.5}, then, if $ n \geq T 2^{K-1}$,   
we have 
\begin{equation}\label{Haarbound}
\left|{\mathbb E}[f(X_{T})]-{\mathbb E}[f(X_{T}^{(n)})]\right| \leq 2C_1 e^{C_1 (1+dT)} (1+ T^2 \sqrt{d 2^{3(K-1)}}){n^{-1}}.
\end{equation}
(iv) If $ (\Delta Z^{(n)}_\ell), \ell=1,2, \ldots, d $ 
are independent copies of the Walsh system \eqref{eq:2.8}, then   
we have, if $ n \geq T d $, 
\begin{equation}\label{Walshbound}
\left|{\mathbb E}[f(X_{T})]-{\mathbb E}[f(X_{T}^{(n)})]\right| \leq 2C_1 e^{C_1 (1+dT)} (1+ T^2 d^2){n^{-1}}.
\end{equation}
\end{Theorem}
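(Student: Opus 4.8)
The plan is to establish part (i) as the central estimate by means of an It\^o--Taylor (Wagner--Platen) expansion, and then to deduce (ii)--(iv) as corollaries by substituting the explicit moment quantities $M_p^{(n)}(Z)$ for each of the three systems and checking that the prefactor $C_r^{(n)}(Z)$ collapses to $2C_1 e^{C_1(1+dT)}$ under the stated lower bounds on $n$.

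For (i) I would first introduce the value function $u(t,x):=\mathbb{E}[f(X_T^{t,x})]$, where $X^{t,x}$ solves \eqref{eq:1.1} started at $x$ at time $t$; under Assumption \ref{ass:3.1} this function is four times differentiable in $x$ with polynomially bounded derivatives (growth exponent governed by the $r$ in the statement), satisfies the backward Kolmogorov equation, and obeys $u(0,x_0)=\mathbb{E}[f(X_T)]$. Writing $h:=T/n$ and telescoping along the grid, the global error becomes
\[
\mathbb{E}[f(X_T^{(n)})]-\mathbb{E}[f(X_T)]=\sum_{\ell=1}^{n}\mathbb{E}\big[u(t_\ell,X_{t_\ell}^{(n)})-u(t_{\ell-1},X_{t_{\ell-1}}^{(n)})\big],
\]
so that each summand is a local one-step error. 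I would expand $u(t_\ell,\cdot)$ to fourth order in the increment $\sigma(X_{t_{\ell-1}}^{(n)})\Delta Z_\ell^{(n)}+b(X_{t_{\ell-1}}^{(n)})h$ and take the conditional expectation given $\mathcal{F}_{t_{\ell-1}}$ using \textbf{A2-2}. The first-, second-, and third-order terms are then controlled \emph{exactly} by \eqref{eq:2.2}, \eqref{eq:2.3}, \eqref{eq:2.4}: combined with the Kolmogorov equation for $u$, the contributions a genuine Brownian increment would produce cancel up to $O(h^2)$, and summing over the $n$ steps yields the $1/n$ term. The remaining fourth-order and higher terms record the mismatch between the scheme and true Brownian increments; after a Cauchy--Schwarz split that separates the polynomially growing derivatives of $u$ from the increments, each is bounded by $M_8^{(n)}(Z)^{1/2}$ per step, giving the $nM_8^{(n)}(Z)^{1/2}$ term. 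The factor $C_r^{(n)}(Z)$ then collects the derivative bounds on $u$ together with the moments $\mathbb{E}[|X_{t_{\ell-1}}^{(n)}|^{2r}]$ of the scheme, the latter propagated through the steps by a discrete Gronwall inequality, which produces the $\exp\{\cdots\}$ depending on $nM_2^{(n)}(Z)$ and the higher $M$'s.

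For (ii)--(iv) I would simply insert the explicit moments. For the Walsh system every component of $\Delta Z^{(n)}$ equals $\pm\sqrt{h}$, so $|\Delta Z^{(n)}|^2=dh$ is deterministic and $M_p^{(n)}(Z)=(dT/n)^{p/2}$; for the Haar system exactly one component is nonzero (of size $2^{(K-1)/2}\sqrt{h}$) with probability $d\,2^{-(K-1)}$, whence $M_p^{(n)}(Z)=d\,2^{(K-1)(p/2-1)}(T/n)^{p/2}$; for the Gaussian system $|\Delta W|^2\sim(T/n)\chi_d^2$, so $M_{2k}^{(n)}(Z)=(T/n)^k\prod_{i=0}^{k-1}(d+2i)$. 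In each case $nM_8^{(n)}(Z)^{1/2}$ reproduces $T^2d^2n^{-1}$, $T^2\sqrt{d\,2^{3(K-1)}}\,n^{-1}$, and $T^2K_d n^{-1}$ respectively, with $K_d=\sqrt{d(d+2)(d+4)(d+6)}$ (so that $\sqrt{d(d+1)(d+2)(d+3)}\le K_d\le\sqrt{105}\,d^2$); and the thresholds $n\ge Td$, $n\ge T2^{K-1}$, $n\ge(8r+5)Td$ are precisely what force every $nM_p^{(n)}(Z)$ appearing in the exponent to be $\le dT$ and $M_{4r}^{(n)}(Z)^{1/2}\le 1$, giving $C_r^{(n)}(Z)\le 2C_1 e^{C_1(1+dT)}$ and hence the asserted bounds.

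The main obstacle, and the bulk of the work, is part (i): proving that the value function $u$ is four times differentiable with polynomially bounded derivatives of the exact growth order $r$, and organizing the It\^o--Taylor remainder so that the matched terms collapse to $O(h^2)$ while the unmatched higher-order terms are cleanly isolated and dominated through $M_8^{(n)}(Z)$ --- all uniformly in the dimension $d$, which is exactly what lets the dimensional dependence surface only through the moment factors $M_p^{(n)}(Z)$ and thereby separate the Walsh, Haar, and Gaussian behaviors in (ii)--(iv).
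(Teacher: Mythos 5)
Your proposal is correct and follows essentially the same route as the paper's proof: a value-function/It\^o--Taylor argument in which the first three conditional moments are matched exactly via Assumption \ref{ass:3.2}, the fourth-order remainder is separated by Cauchy--Schwarz to produce the $nM_{8}^{(n)}(Z)^{1/2}$ term, the scheme's moments are propagated by a discrete Gronwall inequality to yield the exponential factor, and parts (ii)--(iv) follow by inserting the explicit moments $M_{2p}^{(n)}(Z)$ of the three systems under the stated thresholds on $n$. The only cosmetic differences are that the paper handles the time dependence of $u$ through its martingale property along the exact flow (Lemma \ref{lem:5.6}) together with an intermediate one-step Euler scheme $\eta_{\ell}^{(n)}$ driven by true Brownian increments (giving the split $I_{1}=0$, $I_{2}$, $I_{3}$) rather than invoking the backward PDE in time directly, and that the paper only brackets the Gaussian eighth moment rather than computing $K_{d}=\sqrt{d(d+2)(d+4)(d+6)}$ exactly as you do --- both immaterial to the result.
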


The proof of Theorem \ref{theo:3.3} is postponed in Section \ref{sec:5} (Appendix).

\begin{Remark}
\label{rem:3.1}
The above theorem 
shows that, 
not only weak order $1$ of convergence, but also the difference of the bounds with the different choices 
of $(\Delta Z_{\ell}^{(n)})_{\ell=1}^{n}$.
We observe that 
the bound in the Gaussian scheme is the largest. 
Note that, in the Haar scheme and the Walsh scheme, 
$ K $ is chosen so that 
$ d \leq 2^{K-1} $, and practically 
the minimum of such $ K $ will be chosen
although the bound of the Walsh scheme is independent of $ K $.
In the Haar scheme, 
for a large $ d $, 
$ 2^{K-1} -d $ can also be large
(in worst case it is $ 2^{K-1}-2^{K-2} -1= d -2 $).
In such a case the scheme based on the Haar system may suffer a relatively slow convergence, compared with the Walsh scheme. 
In fact, although the bounds are not tight in general, the 
following extreme but simple example shows that 
in some cases the difference of the bounds is almost optimal in
respect of $ d $, though 
we can not see the difference with the Gaussian scheme by the example.
\end{Remark}

In the following proposition, we consider the simple case where $b \equiv 0$, $\sigma$ is the unit matrix, $x_{0}=0$ and $f(x)=|x|^{4}$.

\begin{Proposition}
Let $ W \equiv (W^1,W^2, \ldots, W^d) $ be a $ d $-dimensional Brownian motion starting from $ 0 $.
(i) If $ (\Delta Z^{(n)}_{\ell}), \ell=1,2, \ldots, d $ 
are independent copies of the Haar system \eqref{eq:2.5}, then
\begin{equation*}
    \mathbb{E} \left[\left(\sum_{i=1}^d (W^i_T)^2 \right)^2
    \right]
    -  \mathbb{E} \left[\left( \sum_{i=1}^d\left(\sum_{\ell=1}^n (\Delta Z_\ell^{(n)})^i\right)^2\right)^2\right] = \left(2d + d (d-2^{K-1}) \right)T^{2}n^{-1}.
\end{equation*}
(ii) If $ (\Delta Z^{(n)}_\ell), \ell=1,2, \ldots, d $ 
are independent copies of the Walsh system \eqref{eq:2.8}, then 
\begin{equation}\label{prop:3.new}
    \mathbb{E} \left[\left(\sum_{i=1}^d (W^i_T)^2\right)^2
    \right]
    -  \mathbb{E} \left[\left( \sum_{i=1}^d\left(\sum_{\ell=1}^n (\Delta Z_\ell^{(n)})^i\right)^2\right)^2\right] = 2dT^{2}n^{-1}.
\end{equation}
\end{Proposition}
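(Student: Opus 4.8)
The plan is to exploit the triviality of the coefficients. With $b\equiv 0$, $\sigma$ the identity and $x_{0}=0$, the scheme \eqref{eq:2.1} telescopes to $X_{T}^{(n)}=\sum_{\ell=1}^{n}\Delta Z_{\ell}^{(n)}$, while the exact solution is simply $X_{T}=W_{T}$. Writing $\xi^{i}:=(X_{T}^{(n)})^{i}=\sum_{\ell=1}^{n}(\Delta Z_{\ell}^{(n)})^{i}$, both quantities appearing in the proposition are fourth moments of a sum, so the entire problem reduces to computing ${\mathbb E}[|W_{T}|^{4}]$ and ${\mathbb E}[|X_{T}^{(n)}|^{4}]={\mathbb E}[(\sum_{i}(\xi^{i})^{2})^{2}]$ and subtracting. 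First I would record the Gaussian value: since $W_{T}^{i}\sim N(0,T)$ are independent, expanding $(\sum_{i}(W_{T}^{i})^{2})^{2}=\sum_{i,j}(W_{T}^{i})^{2}(W_{T}^{j})^{2}$ and using ${\mathbb E}[(W_{T}^{i})^{2}]=T$, ${\mathbb E}[(W_{T}^{i})^{4}]=3T^{2}$ gives ${\mathbb E}[|W_{T}|^{4}]=3dT^{2}+d(d-1)T^{2}=d(d+2)T^{2}$, which is the common exact term in both cases (i) and (ii).

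Next I would expand the discrete side as ${\mathbb E}[(\sum_{i}(\xi^{i})^{2})^{2}]=\sum_{i}{\mathbb E}[(\xi^{i})^{4}]+\sum_{i\neq j}{\mathbb E}[(\xi^{i})^{2}(\xi^{j})^{2}]$ and substitute $\xi^{i}=\sum_{\ell}(\Delta Z_{\ell}^{(n)})^{i}$, so each summand becomes a sum over four time indices of ${\mathbb E}$ of a product of four entries. Since the $\Delta Z_{\ell}^{(n)}$ are i.i.d.\ copies across $\ell$, the expectation factorizes over groups of equal time index, and the vanishing first moment \eqref{eq:2.2} forces every time index appearing only once to kill the whole term. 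With four slots the only surviving index-patterns are therefore "all four equal" ($n$ terms, a single-step fourth moment) and the two-pair patterns ($n(n-1)$ terms each). After this reduction the answer depends on the system only through the single-step moments ${\mathbb E}[((\Delta Z^{(n)})^{i})^{4}]$ and, for $i\neq j$, ${\mathbb E}[((\Delta Z^{(n)})^{i})^{2}((\Delta Z^{(n)})^{j})^{2}]$, together with the known ${\mathbb E}[((\Delta Z^{(n)})^{i})^{2}]=T/n$; by symmetry all $d$ diagonal and all $d(d-1)$ off-diagonal contributions are respectively equal.

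The delicate step, which I expect to be the main obstacle, is the bookkeeping of the three pairings in the off-diagonal term $i\neq j$. Two of the three pairings group an $i$-entry with a $j$-entry and hence produce a factor ${\mathbb E}[(\Delta Z^{(n)})^{i}(\Delta Z^{(n)})^{j}]^{2}$, which vanishes for $i\neq j$ by the orthogonality \eqref{eq:2.3}; only the pairing that keeps the two $i$-entries together and the two $j$-entries together survives, contributing $n(n-1)(T/n)^{2}$. Getting this enumeration right, and distinguishing it from the diagonal case (where all three pairings survive and give $3n(n-1)(T/n)^{2}$), is the one genuinely non-routine point.

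Finally I would compute the two system-specific single-step moments. For the Haar system \eqref{eq:2.5} the factor $h_{i}^{(K)}(U)$ is supported on $\{2i-1,2i\}$ and equals $\pm 2^{(K-1)/2}$ there, giving ${\mathbb E}[((\Delta Z^{(n)})^{i})^{4}]=2^{K-1}(T/n)^{2}$; moreover the supports of $h_{i}^{(K)}$ and $h_{j}^{(K)}$ are disjoint for $i\neq j$, so the mixed moment vanishes. For the Walsh system \eqref{eq:2.8} every $\varphi(j)=\tau_{S}$ takes values in $\{-1,1\}$, whence $\tau_{S}^{2}=1$ and both ${\mathbb E}[((\Delta Z^{(n)})^{i})^{4}]=(T/n)^{2}$ and the mixed moment equal $(T/n)^{2}$. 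Plugging these in yields ${\mathbb E}[|X_{T}^{(n)}|^{4}]=\tfrac{T^{2}}{n}\{d\,2^{K-1}+(d^{2}+2d)(n-1)\}$ in the Haar case and $(d^{2}+2d)T^{2}-2dT^{2}/n$ in the Walsh case. Subtracting each from $d(d+2)T^{2}$ collapses the $(n-1)/n$ terms and leaves exactly $(2d+d(d-2^{K-1}))T^{2}n^{-1}$ and $2dT^{2}n^{-1}$, as claimed; the extra discrepancy $d(d-2^{K-1})$ in the Haar case is precisely the footprint of the inflated fourth moment $2^{K-1}(T/n)^{2}$ that is absent for the $\pm 1$-valued Walsh variables.
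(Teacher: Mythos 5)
Your proposal is correct and follows essentially the same route as the paper's proof: both expand $\mathbb{E}\bigl[\bigl(\sum_{i}(\sum_{\ell}(\Delta Z_{\ell}^{(n)})^{i})^{2}\bigr)^{2}\bigr]$ into diagonal and off-diagonal components with the same time-index pairing bookkeeping (factor $3n(n-1)$ versus $n(n-1)$), compute the identical single-step moments $2^{K-1}(T/n)^{2}$ (Haar, with vanishing mixed moment) and $(T/n)^{2}$ (Walsh, with mixed moment $(T/n)^{2}$), and subtract from $\mathbb{E}[|W_{T}|^{4}]=(d^{2}+2d)T^{2}$. The only difference is presentational: you spell out the Wick-type pairing argument that the paper's first display states implicitly.
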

\begin{proof}
Observe first that, in both cases, 
\begin{equation*}
    \begin{split}
        \mathbb{E}\left[\left( \sum_{i=1}^d\left(\sum_{\ell=1}^n (\Delta Z_\ell^{(n)})^i\right)^2\right)^2\right]
        &=  \sum_{i=1}^d\left(\sum_{\ell=1}^n\mathbb{E}\left[
        ((\Delta Z^{(n)}_\ell)^i)^4\right]
        +3\sum_{\ell \ne \ell'}\mathbb{E}\left[((\Delta Z^{(n)}_\ell)^i)^2\right]\mathbb{E}\left[((\Delta Z^{(n)}_{\ell'})^i)^2\right]\right)  \\
        &\hspace{0.38cm}+ \sum_{i\ne i'}\left(
        \sum_{\ell=1}^n 
       \mathbb{E}\left[( (\Delta Z^{(n)}_\ell)^i)^2((\Delta Z^{(n)}_\ell)^{i'})^2\right]
        + \sum_{\ell \ne \ell'}\mathbb{E}\left[((\Delta Z^{(n)}_\ell)^i)^2\right]\mathbb{E}\left[((\Delta Z^{(n)}_{\ell'})^{i'})^2\right]\right) \\
        &=\sum_{i=1}^{d}\left(\sum_{\ell=1}^{n}{\mathbb E}\left[((\Delta Z_{\ell}^{(n)})^{i})^{4}\right]+3n(n-1)\left(\frac{T}{n}\right)^{2}\right) \\
        &\hspace{0.38cm}+\sum_{i \neq i'}\left(\sum_{\ell=1}^{n}{\mathbb E}\left[((\Delta Z_{\ell}^{(n)})^{i})^{2}((\Delta Z^{(n)}_{\ell})^{i'})^{2}\right]+n(n-1)\left(\frac{T}{n}\right)^{2}\right).
    \end{split}
\end{equation*}
In (i), we have 
\begin{equation*}
    \mathbb{E} \left[ ((\Delta Z^{(n)}_\ell)^i)^4\right] = 2^{K-1} \left(\frac{T}{n}\right)^2 \quad \text{ and } \quad \mathbb{E} \left[
       ( (\Delta Z^{(n)}_\ell)^i)^2((\Delta Z^{(n)}_\ell)^{i'})^2  
       \right] = 0
\end{equation*}
if $ i \ne i'$.
In (ii), we have 
\begin{equation*}
    \mathbb{E} \left[ ((\Delta Z^{(n)}_\ell)^i)^4\right] =  \left(\frac{T}{n}\right)^2 \quad \text{ and } \quad \mathbb{E} \left[
       ( (\Delta Z^{(n)}_\ell)^i)^2((\Delta Z^{(n)}_\ell)^{i'})^2  
       \right] = \left(\frac{T}{n}\right)^2
\end{equation*}
if $ i \ne i'$.
Since 
\begin{equation*}
    \mathbb{E} \left[\left(\sum_{i=1}^d (W^i_T)^2\right)^2
    \right] = (d^2 + 2d) T^2, 
\end{equation*}
we have the desired results. 
\end{proof}

\section{Numerical experiments}
\label{sec:4}
In this section, we compare the quality among the Euler-Maruyama schemes $X^{(n)}$ by the Gaussian system, the Haar system \eqref{eq:2.5} and the Walsh system \eqref{eq:2.8} thorough some numerical experiments using the Monte Carlo method for ${\mathbb E}[f(X_{T}^{(n)})]$.
First, we compare the confidence interval, the computational time, the sample variance and the ratio standard deviation/computational time among them under the following situations Case 1 and Case 2.
\begin{itemize}
\item Case 1. We consider the $d$-dimensional SDE
$$
{\rm d}X_{t}^{i}=\frac{1}{d}\sum_{j=1}^{d}(X_{t}^{i}-X_{t}^{j}){\rm d}t+{\rm d}W_{t}^{i}, \quad t \geq 0, \quad i \in \{1,2,\dots,d\}
$$
with the initial $x_{0}=(1,1,\ldots,1)$, the time horizon $T=1$ and the test function $f(x)=\cos(\sum_{i=1}^{d}x^{i})$.
\item Case 2. We consider the $d$-dimensional SDE \eqref{eq:1.1} with the coefficients $b \equiv 0$,
$$
\sigma_{j}^{i}(x)=
\left\{
\begin{array}{ll}
x^{i-1} & j=i-1 \\
x^{i} & j=i \\
x^{i+1} & j=i+1 \\
0 & \text{otherwise},
\end{array}
\right. \quad x \in {\mathbb R}^{d}, \quad i,j \in \{1,2,\ldots,d\}
$$
and the initial $x_{0}=(1,1,\ldots,1)$, the time horizon $T=1$ and the test function $f(x)=\cos(\sum_{i=1}^{d}x^{i})$.
\end{itemize}

In order to see 
the effect of the odd-ordered map 
given by \eqref{odd-ordered map}, 
we compare it with the following simple algorithm 
which is not odd-ordered. 
Let $K \in \{1,2,\ldots,32\}$ and $S \subset \{1,2,\ldots,K\}$.
By setting the bits corresponding to the numbers of the elements of $S$ to $1$ and the rest to $0$, $\tau_{S}$ can be considered to correspond to an element of the bit mask.
By using the bit shift operator, 
we construct
$\varphi$ by skipping even 
elements like $\varphi(1)=\tau_{1}=(0,\ldots,0,1)$, $\varphi(2)=\tau_{2}=(0,\ldots,0,1,0)$, $\ldots$, $\varphi(K)=\tau_{K}=(1,0,\ldots,0)$, $\varphi(K+1)=\tau_{1}\tau_{2}\tau_{3}=(0,\ldots,0,1,1,1)$, $\varphi(K+2)=\tau_{1}\tau_{2}\tau_{4}=(0,\ldots,0,1,0,1,1)$, $\ldots$
and so on.
Note that this $ \varphi $ is not an odd-ordered map for $ K \geq 4 $; an odd-ordered map should satisfy 
$ \{\varphi (3), \varphi (4) \} = \{ \tau_3, \tau_1\tau_2 \tau_3\} $ (see Definition \ref{def:2.3}).

In the figures and tables (A1)-(G2) below, we describe some numerical results using the Monte Carlo method for ${\mathbb E}[f(X_{T}^{(n)})]$.
The figures and tables (A1)-(G1) and (A2)-(G2) show results for Case 1 and Case 2, respectively.
Recall that $d$, $n$ and $m$ mean the dimension of the SDEs, the number of time steps of the EM schemes and the number of Monte Carlo trials, respectively.
We chose $\min\{K \in {\mathbb N}\,;\,d \leq 2^{K-1}\}$ as $K \in {\mathbb N}$ in Section \ref{sec:2.2} and \ref{sec:2.3} for all the results using the Haar system \eqref{eq:2.5} and the Walsh system \eqref{eq:2.8} to stabilize the numerical calculations as much as possible.
Specifically, we chose $K=6$ for $d=32=2^{5}$, $K=7$ for $d=40,48,56,64=2^{6}$, $K=8$ for $d=80,96,112,128=2^{7}$ and $K=9$ for $d=160,192,224,256=2^{8}$.
In (A1) and (A2), the $x$-axis indicates the number of Monte Carlo trials, and the $y$-axis indicates the value of the sample mean of ${\mathbb E}[f(X_{T}^{(n)})]$.
In the other figures, the $x$-axis indicates the value of the dimension $d=32=2^{5}, 40,48,56,64=2^{6}, 80,96,112,128=2^{7}, 160,192,224,256=2^{8}$ of the SDEs, and we compare the EM schemes about various items.
The purple lines and the green lines are results for the EM schemes by the Gaussian system and the Haar system, respectively.
The blue lines named Walsh system 1 are results for the EM scheme by the Walsh system with the algorithm created using the odd-ordered map \eqref{odd-ordered map}.
On the other hand, the yellow lines named Walsh system 2 are results for the EM scheme by the Walsh system with the simple algorithm created using the bit mask.

The figures (A1) and (A2) show numerical results about the sample mean of ${\mathbb E}[f(X_{T}^{(n)})]$ with $d=100$ and $n=2^{5}$.
In the figures and tables (B1), (B2), (C1) and (C2), we compare, among the EM schemes, the $95\%$ confidence interval for each dimension with $n=2^{10}$ and $m=2^{20}$.
Here, we used
\begin{align*}
&\text{sample mean}-2.262 \times \sqrt[]{\frac{\text{unbiased variance}}{m}} \leq \text{population mean} \\
&\hspace{5.0cm}\leq \text{sample mean}+2.262 \times \sqrt[]{\frac{\text{unbiased variance}}{m}}
\end{align*}
to calculate the $95\%$ confidence interval since we do not know the population variance, and we used $2 \times 2.262 \times \sqrt[]{\frac{\text{unbiased variance}}{m}}$ for the size of the $95\%$ confidence interval.
From these results, we can confirm that the EM schemes by the Haar system and the Walsh system converge with the same accuracy as the standard EM scheme by the Gaussian scheme.

In the figures (D1), (D2), (E1) and (E2), we compare, among the EM schemes, the computational time (sec) of Monte Calro trials for each dimension with $n=2^{10}$ and $m=2^{20}$.
The figures (E1) and (E2) are enlargements of (D1) and (D2), respectively, excluding the standard EM scheme with the Gaussian system.
These results show that the Haar system has the fastest computation time, followed the Walsh system by the odd-ordered map, the Walsh system by the bit mask and the Gaussian system.
It can be seen that the higher the dimension, the more the computation time can be greatly improved by using the Haar system and the Walsh system.

In the figures (F1) and (F2), we compare, among the EM schemes, the sample variance of ${\mathbb E}[f(X_{T}^{(n)})]$ for each dimension with $n=2^{10}$ and $m=2^{20}$.
We can confirm that the EM schemes by the Haar and Walsh systems are as stable as the standard EM scheme by the Gaussian scheme.
These results can also be seen in the figures (B1) and (B2).

In the figures (G1) and (G2), we compare, among the EM schemes, the ratio of the standard deviation of ${\mathbb E}[f(X_{T}^{(n)})]$ and the computational time (the standard deviation divided the computational time) for each dimension with $n=2^{10}$ and $m=2^{20}$.
These results also show that the Haar system has the highest accuracy, followed the Walsh system by the odd-ordered map, the Walsh system by the bit mask and the Gaussian system.

{\tiny
\begin{table}
\begin{tabular}{cc}
\begin{minipage}{0.5\hsize}
\begin{center}
\includegraphics[width=0.8\textwidth]{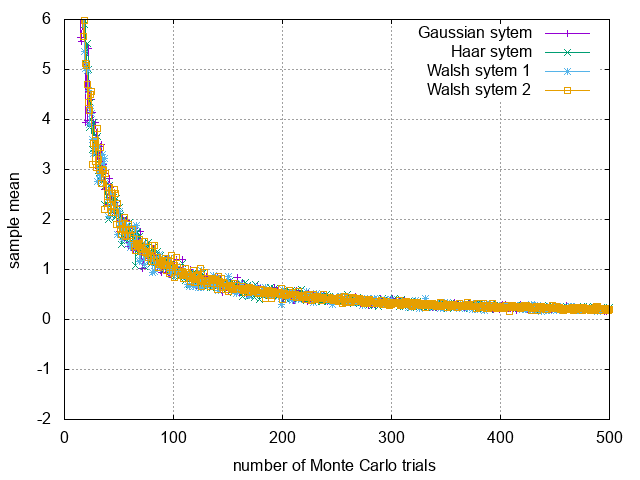}
\caption*{{\tiny (A1) sample mean in Case 1 with $d=100$ and $n=2^{5}$}}
\end{center}
\end{minipage}
\begin{minipage}{0.5\hsize}
\begin{center}
\includegraphics[width=0.8\textwidth]{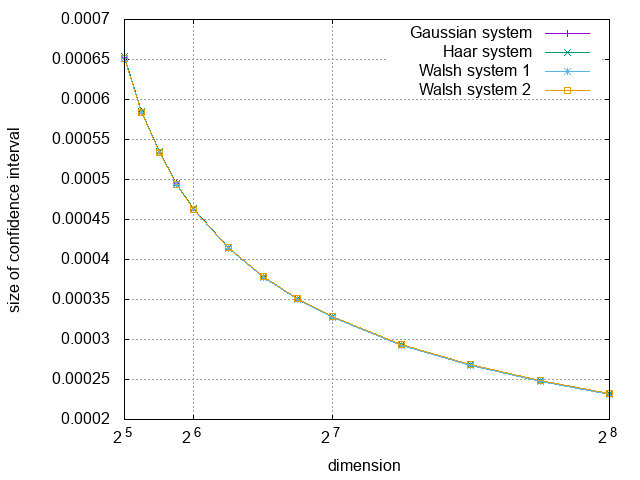}
\caption*{{\tiny (B1) size of $95\%$ confidence interval in Case 1 with $n=2^{10}$ and $m=2^{20}$}}
\end{center}
\end{minipage}
\end{tabular}
\begin{tabular}{|r|r|r|r|r|} \hline
$d$ & Gaussian system & Haar system & Walsh system 1 & Walsh system 2 \\ \hline \hline
$2^{5}=32$ & $[5.316311 \times 10^{-1}, 5.322835 \times 10^{-1}]$ & $[5.314268 \times 10^{-1}, 5.320800 \times 10^{-1}]$ & $[5.317695 \times 10^{-1}, 5.324213 \times 10^{-1}]$ & $[5.314857 \times 10^{-1}, 5.321376 \times 10^{-1}]$ \\ \hline
$40$ & $[5.331991 \times 10^{-1}, 5.337837 \times 10^{-1}]$ & $[5.332854 \times 10^{-1}, 5.338704 \times 10^{-1}]$ & $[5.334386 \times 10^{-1}, 5.340226 \times 10^{-1}]$ & $[5.331589 \times 10^{-1}, 5.337433 \times 10^{-1}]$ \\ \hline
$48$ & $[5.343189 \times 10^{-1}, 5.348532 \times 10^{-1}]$ & $[5.343348 \times 10^{-1}, 5.348696 \times 10^{-1}]$ & $[5.345783 \times 10^{-1}, 5.351117 \times 10^{-1}]$ & $[5.342897 \times 10^{-1}, 5.348237 \times 10^{-1}]$ \\ \hline
$56$ & $[5.352447 \times 10^{-1}, 5.357398  \times 10^{-1}]$ & $[5.352226  \times 10^{-1}, 5.357179  \times 10^{-1}]$ & $[5.354836  \times 10^{-1},5.359779 \times 10^{-1}]$ & $[5.351636  \times 10^{-1}, 5.356580 \times 10^{-1}]$ \\ \hline
$2^{6}=64$ & $[5.359463 \times 10^{-1}, 5.364091 \times 10^{-1}]$ & $[5.357873 \times 10^{-1}, 5.362510 \times 10^{-1}]$ & $[5.360351 \times 10^{-1}, 5.364976 \times 10^{-1}]$ & $[5.357885 \times 10^{-1}, 5.362515 \times 10^{-1}]$ \\ \hline
$80$ & $[5.367704 \times 10^{-1},5.371847 \times 10^{-1}]$ & $[5.366489 \times 10^{-1}, 5.370637 \times 10^{-1}]$ & $[5.368968 \times 10^{-1}, 5.373108 \times 10^{-1}]$ & $[5.366503 \times 10^{-1}, 5.370649 \times 10^{-1}]$ \\ \hline
$96$ & $[5.373508 \times 10^{-1}, 5.377292 \times 10^{-1}]$ & $[5.372468 \times 10^{-1}, 5.376258 \times 10^{-1}]$ & $[5.374371 \times 10^{-1}, 5.378149 \times 10^{-1}]$ & $[5.372517 \times 10^{-1}, 5.376304 \times 10^{-1}]$ \\ \hline
$112$ & $[5.377219 \times 10^{-1}, 5.380727 \times 10^{-1}]$ & $[5.376723 \times 10^{-1}, 5.380233 \times 10^{-1}]$ & $[5.378768 \times 10^{-1}, 5.382268 \times 10^{-1}]$ &  $[5.376798 \times 10^{-1}, 5.380305 \times 10^{-1}]$ \\ \hline
$2^{7}=128$ & $[5.380441 \times 10^{-1}, 5.383721 \times 10^{-1}]$ & $[5.379881 \times 10^{-1}, 5.383166 \times 10^{-1}]$ & $[5.382057 \times 10^{-1}, 5.385332 \times 10^{-1}]$ &  $[5.379877 \times 10^{-1}, 5.383158 \times 10^{-1}]$ \\ \hline
$160$ & $[5.384504 \times 10^{-1}, 5.387440 \times 10^{-1}]$ & $[5.384511 \times 10^{-1}, 5.387447 \times 10^{-1}]$ & $[5.386179 \times 10^{-1}, 5.389110 \times 10^{-1}]$ &  $[5.384536 \times 10^{-1}, 5.387472 \times 10^{-1}]$ \\ \hline
$192$ & $[5.387672 \times 10^{-1}, 5.390351 \times 10^{-1}]$ & $[5.387074 \times 10^{-1}, 5.389756 \times 10^{-1}]$ & $[5.388499 \times 10^{-1}, 5.391178 \times 10^{-1}]$ & $[5.387702 \times 10^{-1}, 5.390384 \times 10^{-1}]$ \\ \hline
$224$ & $[5.389770 \times 10^{-1}, 5.392251 \times 10^{-1}]$ & $[5.389362 \times 10^{-1}, 5.391847 \times 10^{-1}]$ & $[5.390752 \times 10^{-1}, 5.393231 \times 10^{-1}]$ &  $[5.389677 \times 10^{-1}, 5.392160 \times 10^{-1}]$ \\ \hline
$2^{8}=256$ & $[5.391502 \times 10^{-1}, 5.393822 \times 10^{-1}]$ & $[5.391090 \times 10^{-1}, 5.393414 \times 10^{-1}]$ & $[5.392262 \times 10^{-1}, 5.394581 \times 10^{-1}]$ & $[5.391671 \times 10^{-1}, 5.393993 \times 10^{-1}]$ \\ \hline
\end{tabular}
\caption*{{\tiny (C1) $95\%$ confidence interval in Case 1 with $n=2^{10}$ and $m=2^{20}$}}
\begin{tabular}{cc}
\begin{minipage}{0.5\hsize}
\begin{center}
\includegraphics[width=0.8\textwidth]{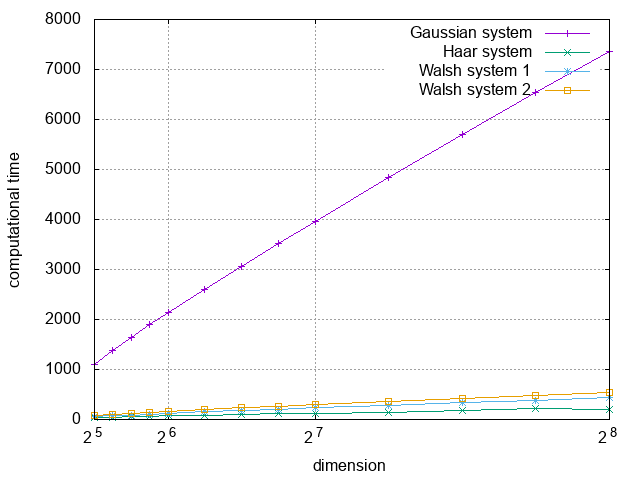}
\caption*{{\tiny (D1) computational time in Case 1 with $n=2^{10}$ and $m=2^{20}$}}
\end{center}
\end{minipage}
\begin{minipage}{0.5\hsize}
\begin{center}
\includegraphics[width=0.8\textwidth]{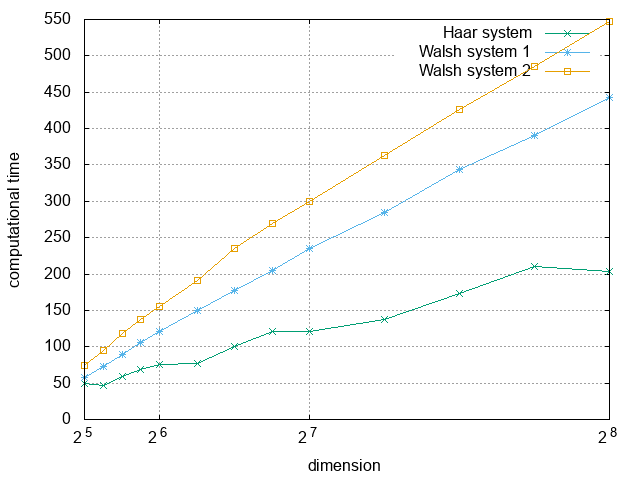}
\caption*{{\tiny (E1) computational time in Case 1 with $n=2^{10}$ and $m=2^{20}$}}
\end{center}
\end{minipage}
\end{tabular}
\begin{tabular}{cc}
\begin{minipage}{0.5\hsize}
\begin{center}
\includegraphics[width=0.8\textwidth]{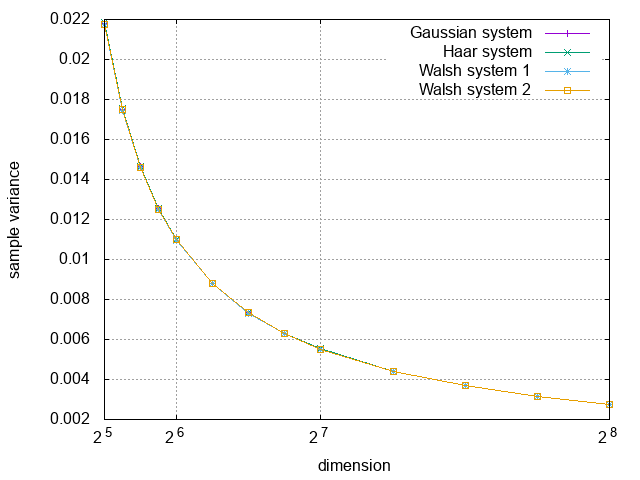}
\caption*{{\tiny (F1) sample variance in Case 1 with $n=2^{10}$ and $m=2^{20}$}}
\end{center}
\end{minipage}
\begin{minipage}{0.5\hsize}
\begin{center}
\includegraphics[width=0.8\textwidth]{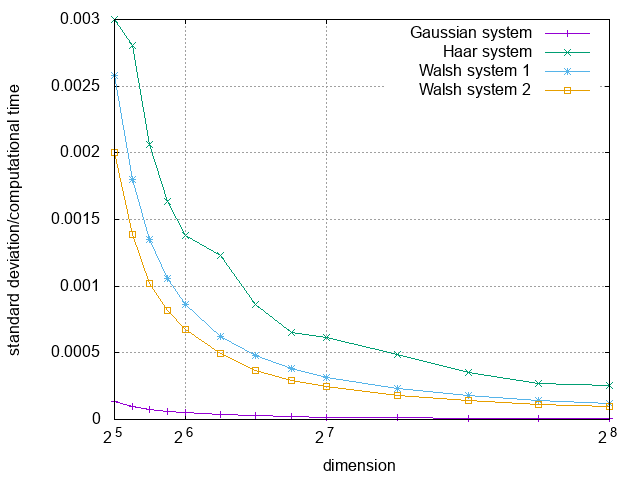}
\caption*{{\tiny (G1) standard deviation/computational time in Case 1 with $n=2^{10}$ and $m=2^{20}$}}
\end{center}
\end{minipage}
\end{tabular}
\end{table}
}

{\tiny
\begin{table}
\begin{tabular}{cc}
\begin{minipage}{0.5\hsize}
\begin{center}
\includegraphics[width=0.8\textwidth]{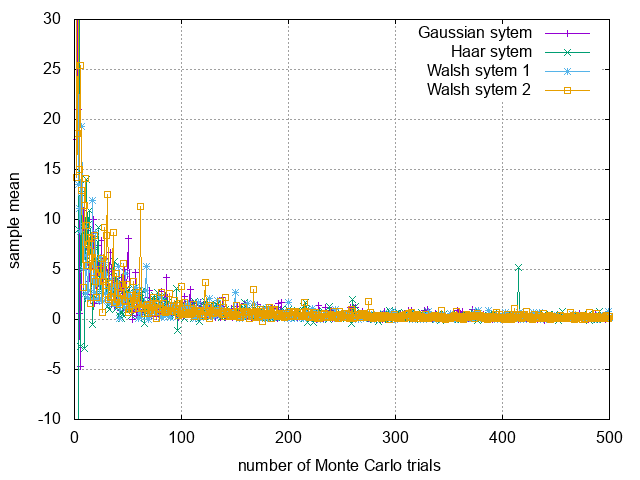}
\caption*{{\tiny (A2) sample mean in Case 2 with $d=100$ and $n=2^{5}$}}
\end{center}
\end{minipage}
\begin{minipage}{0.5\hsize}
\begin{center}
\includegraphics[width=0.8\textwidth]{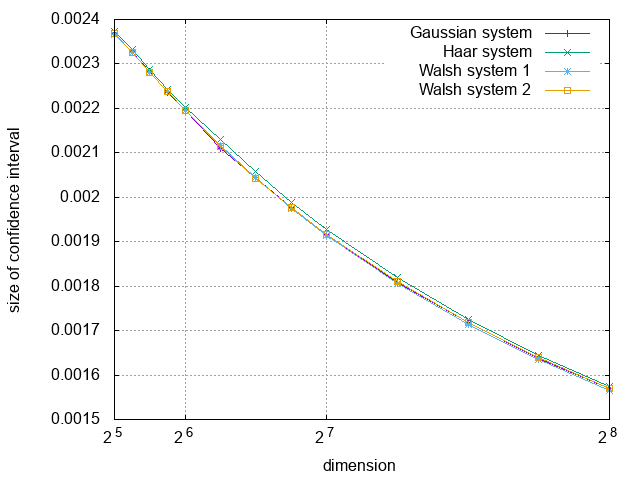}
\caption*{{\tiny (B2) size of $95\%$ confidence interval in Case 2 with $n=2^{10}$ and $m=2^{20}$}}
\end{center}
\end{minipage}
\end{tabular}
\begin{tabular}{|r|r|r|r|r|} \hline
$d$ & Gaussian system & Haar system & Walsh system 1 & Walsh system 2 \\ \hline \hline
$2^{5}=32$ & $[5.388813 \times 10^{-1}, 5.412508 \times 10^{-1}]$ & $[5.380646 \times 10^{-1}, 5.404370 \times 10^{-1}]$ & $[5.394858 \times 10^{-1}, 5.418530 \times 10^{-1}]$ &  $[5.393577 \times 10^{-1}, 5.417261 \times 10^{-1}]$ \\ \hline
$40$ & $[5.312468 \times 10^{-1}, 5.335721 \times 10^{-1}]$ & $[5.291805 \times 10^{-1}, 5.315129 \times 10^{-1}]$ & $[5.313038 \times 10^{-1}, 5.336298 \times 10^{-1}]$ & $[5.307647 \times 10^{-1}, 5.330906 \times 10^{-1}]$ \\ \hline
$48$ & $[5.248937 \times 10^{-1}, 5.271766  \times 10^{-1}]$ & $[5.236429  \times 10^{-1}, 5.259303  \times 10^{-1}]$ & $[5.257482  \times 10^{-1}, 5.280289 \times 10^{-1}]$ & $[5.255703  \times 10^{-1}, 5.278508  \times 10^{-1}]$ \\ \hline
$56$ & $[5.213830 \times 10^{-1}, 5.236191 \times 10^{-1}]$ & $[5.196167 \times 10^{-1}, 5.218595 \times 10^{-1}]$ & $[5.217672 \times 10^{-1}, 5.240042 \times 10^{-1}]$ & $[5.213875 \times 10^{-1}, 5.236256 \times 10^{-1}]$ \\ \hline
$2^{6}=64$ & $[5.190664 \times 10^{-1}, 5.212606 \times 10^{-1}]$ & $[5.167808 \times 10^{-1}, 5.189819 \times 10^{-1}]$ & $[5.184511 \times 10^{-1}, 5.206471 \times 10^{-1}]$ & $[5.183159 \times 10^{-1}, 5.205116 \times 10^{-1}]$ \\ \hline
$80$ & $[5.152587 \times 10^{-1}, 5.173693 \times 10^{-1}]$ & $[5.108301 \times 10^{-1}, 5.129611 \times 10^{-1}]$ & $[5.140594 \times 10^{-1}, 5.161762 \times 10^{-1}]$ & $[5.145808 \times 10^{-1}, 5.166955 \times 10^{-1}]$ \\ \hline
$96$ & $[5.120016 \times 10^{-1}, 5.140460 \times 10^{-1}]$ & $[5.089440 \times 10^{-1}, 5.110019 \times 10^{-1}]$ & $[5.119263 \times 10^{-1}, 5.139703 \times 10^{-1}]$ & $[5.123631 \times 10^{-1}, 5.144062 \times 10^{-1}]$ \\ \hline
$112$ & $[5.113176 \times 10^{-1}, 5.132954 \times 10^{-1}]$ & $[5.083539 \times 10^{-1}, 5.103426 \times 10^{-1}]$ & $[5.112470 \times 10^{-1}, 5.132221 \times 10^{-1}]$ & $[5.111189 \times 10^{-1}, 5.130968 \times 10^{-1}]$ \\ \hline
$2^{7}=128$ & $[5.105766 \times 10^{-1},5.124928 \times 10^{-1}]$ & $[5.079734 \times 10^{-1},5.099018 \times 10^{-1}]$ & $[5.108941 \times 10^{-1}, 5.128082 \times 10^{-1}]$ & $[5.107330 \times 10^{-1}, 5.126497 \times 10^{-1}]$ \\ \hline
$160$ & $[5.104821 \times 10^{-1}, 5.122913 \times 10^{-1}]$ & $[5.070743 \times 10^{-1}, 5.088945 \times 10^{-1}]$ & $[5.109406 \times 10^{-1}, 5.127476 \times 10^{-1}]$ & $[5.107402 \times 10^{-1}, 5.125503 \times 10^{-1}]$ \\ \hline
$192$ & $[5.111927 \times 10^{-1}, 5.129120 \times 10^{-1}]$ & $[5.084502 \times 10^{-1}, 5.101758 \times 10^{-1}]$ & $[5.116462 \times 10^{-1}, 5.133605 \times 10^{-1}]$ & $[5.112806 \times 10^{-1}, 5.129998 \times 10^{-1}]$ \\ \hline
$224$ & $[5.127379 \times 10^{-1}, 5.143750 \times 10^{-1}]$ & $[5.099301 \times 10^{-1}, 5.115749 \times 10^{-1}]$ & $[5.127729 \times 10^{-1}, 5.144076 \times 10^{-1}]$ & $[5.124041 \times 10^{-1}, 5.140432 \times 10^{-1}]$ \\ \hline
$2^{8}=256$ & $[5.131295 \times 10^{-1}, 5.146994 \times 10^{-1}]$ & $[5.114802 \times 10^{-1}, 5.130537 \times 10^{-1}]$ & $[5.140365 \times 10^{-1}, 5.156008 \times 10^{-1}]$ & $[5.135172 \times 10^{-1}, 5.150867 \times 10^{-1}]$ \\ \hline
\end{tabular}
\caption*{{\tiny (C2) $95\%$ confidence interval in Case 2 with $n=2^{10}$ and $m=2^{20}$}}
\begin{tabular}{cc}
\begin{minipage}{0.5\hsize}
\begin{center}
\includegraphics[width=0.8\textwidth]{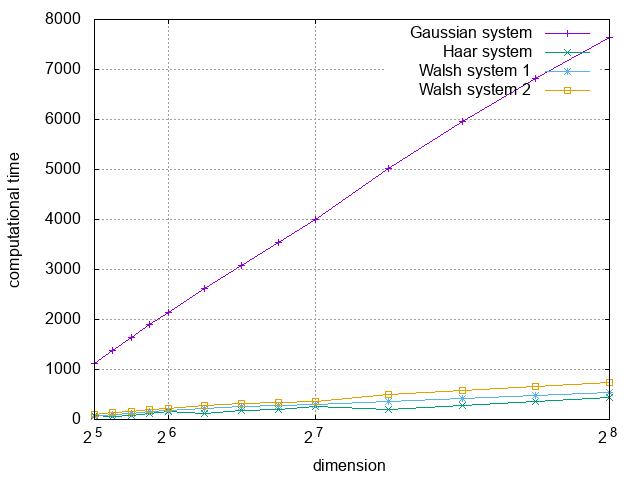}
\caption*{{\tiny (D2) computational time in Case 2 with $n=2^{10}$ and $m=2^{20}$}}
\end{center}
\end{minipage}
\begin{minipage}{0.5\hsize}
\begin{center}
\includegraphics[width=0.8\textwidth]{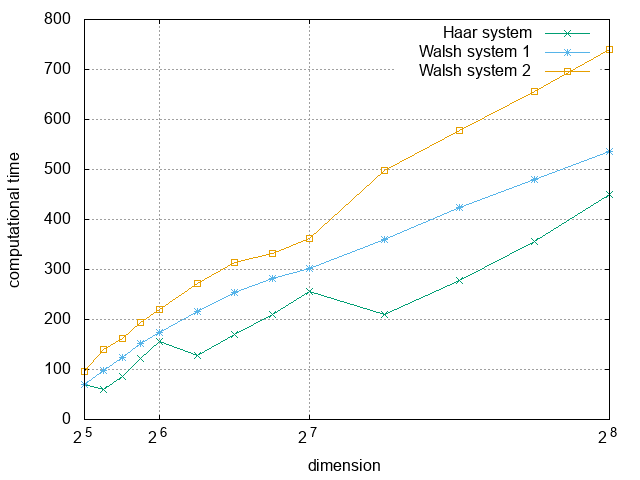}
\caption*{{\tiny (E2) computational time in Case 2 with $n=2^{10}$ and $m=2^{20}$}}
\end{center}
\end{minipage}
\end{tabular}
\begin{tabular}{cc}
\begin{minipage}{0.5\hsize}
\begin{center}
\includegraphics[width=0.8\textwidth]{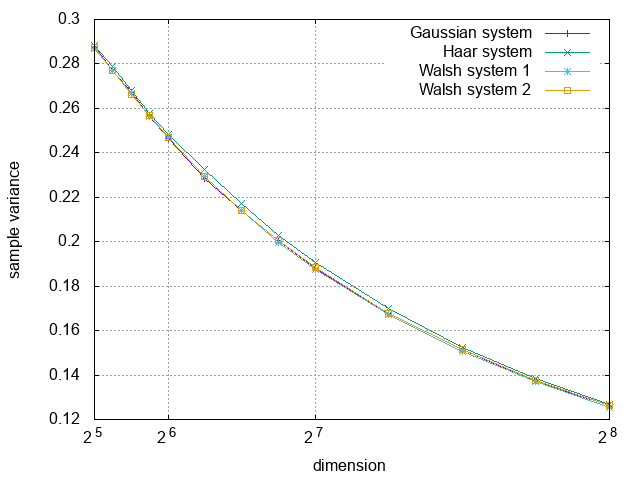}
\caption*{{\tiny (F2) sample variance in Case 2 with $n=2^{10}$ and $m=2^{20}$}}
\end{center}
\end{minipage}
\begin{minipage}{0.5\hsize}
\begin{center}
\includegraphics[width=0.8\textwidth]{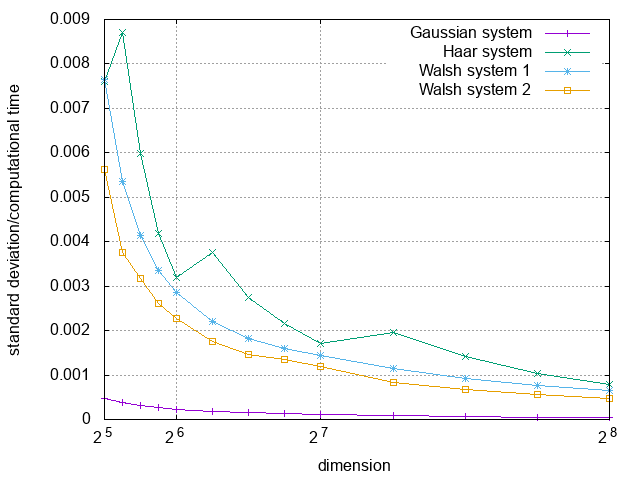}
\caption*{{\tiny (G2) standard deviation/computational time in Case 2 with $n=2^{10}$ and $m=2^{20}$}}
\end{center}
\end{minipage}
\end{tabular}
\end{table}
}

Next, we compare the computational complexity among the Euler-Maruyama schemes $X^{(n)}$ by the Gaussian system, the Haar system \eqref{eq:2.5} and the Walsh system \eqref{eq:2.8} under the following simple situation Case 3.

\begin{itemize}
\item Case 3. We consider a $d$-dimensional Brownian motion $X=W$ with the initial $x_{0} \equiv 0$, the time horizon $T=1$ and the test function $f(x)=|x|^{2}/d$.
Then note that ${\mathbb E}[f(X_{T})]=1$.
\end{itemize}
For a given $\varepsilon>0$, we consider how much time until each EM scheme $X^{(n)}$ satisfies $|1-{\mathbb E}[f(X_{T}^{(n)})]|<\varepsilon$.
These numerical results are shown in the figures (H3)-(K3) below.
The $y$-axis in all the figures indicates the average of the computational time of $100$ trials.
Here, the computational time refers to the time it takes for Monte Calro trials to satisfy $|1-{\mathbb E}[f(X_{T}^{(n)})]|<\varepsilon$ for the first time.
The purple, green, blue and yellow points are results for the EM schemes by the Gaussian system, the Haar system, the Walsh system (named Walsh system 1) with the algorithm created using the odd-ordered map \eqref{odd-ordered map} and the Walsh system (named Walsh system 2) with the simple algorithm created using the bit mask, respectively.
The lines represent the regression lines corresponding to each color.
The figures (I3) and (K3) are enlargements of (H3) and (J3), respectively, excluding the standard EM scheme with the Gaussian system.

In the figures (H3) and (I3), we compare, among the EM schemes, the computational time for each dimension with $n=2^{10}$ and the fixed accuracy $\varepsilon=10^{-3}$.
In the figures (J3) and (K3), we compare, among them, the computational time for each $\varepsilon=5.0 \times10^{-4}, 5.5 \times 10^{-4},\ldots, 9.5 \times 10^{-4}, 1.0 \times 10^{-3}$ with the fixed dimension $d=2^{5}$ and $n=2^{10}$.
From these results with considering the regression lines, we can also see that the Haar system has the highest accuracy, followed the Walsh system by the odd-ordered map, the Walsh system by the bit mask and the Gaussian system.

{\tiny
\begin{table}
\begin{tabular}{cc}
\begin{minipage}{0.5\hsize}
\begin{center}
\includegraphics[width=0.8\textwidth]{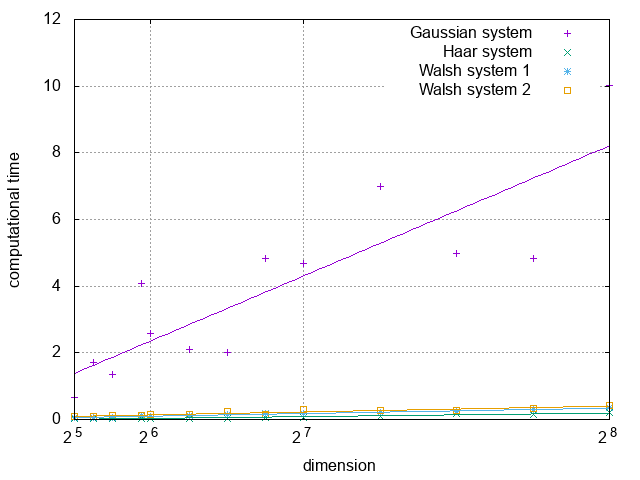}
\caption*{{\tiny (H3) computational complexity for dimension in Case 3 with $n=2^{10}$ and $\varepsilon=10^{-3}$}}
\end{center}
\end{minipage}
\begin{minipage}{0.5\hsize}
\begin{center}
\includegraphics[width=0.8\textwidth]{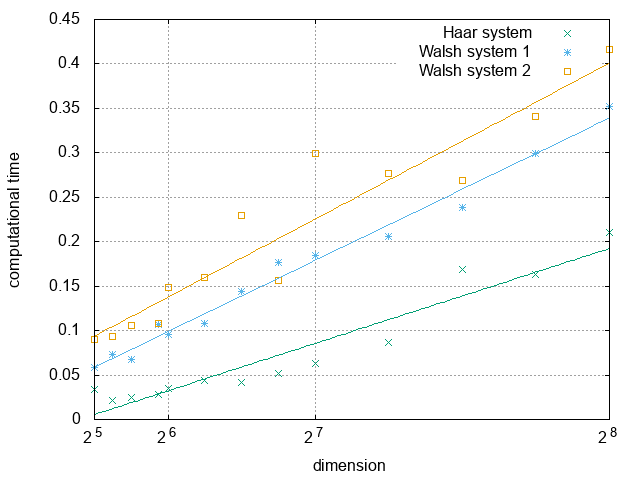}
\caption*{{\tiny (I3) computational complexity for dimension in Case 3 with $n=2^{10}$ and $\varepsilon=10^{-3}$}}
\end{center}
\end{minipage}
\end{tabular}
\begin{tabular}{cc}
\begin{minipage}{0.5\hsize}
\begin{center}
\includegraphics[width=0.8\textwidth]{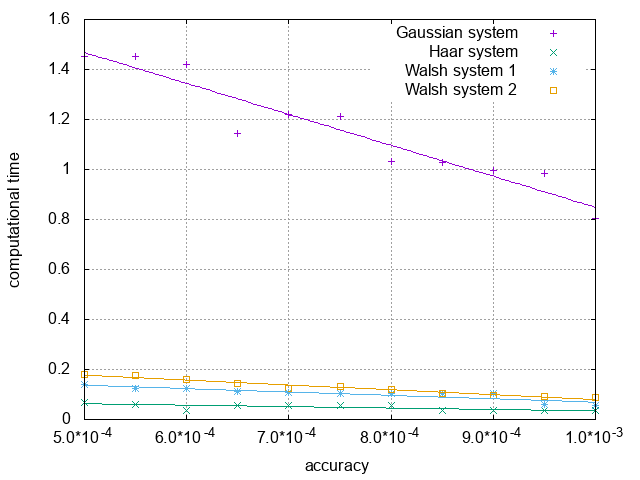}
\caption*{{\tiny (J3) computational complexity for accuracy in Case 3 with $d=2^{5}$ and $n=2^{10}$}}
\end{center}
\end{minipage}
\begin{minipage}{0.5\hsize}
\begin{center}
\includegraphics[width=0.8\textwidth]{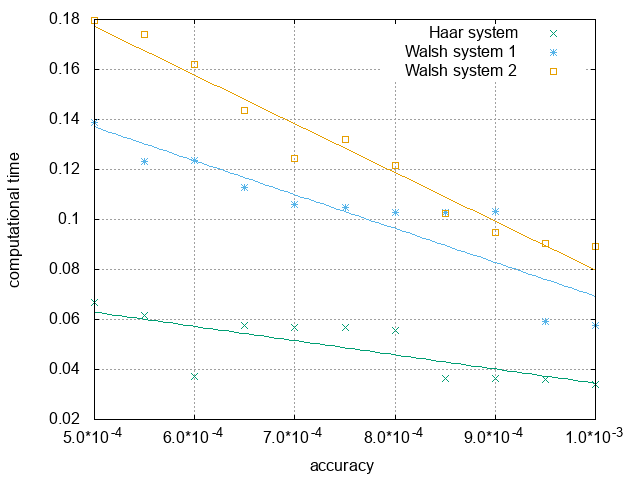}
\caption*{{\tiny (K3) computational complexity for accuracy in Case 3 with $d=2^{5}$ and $n=2^{10}$}}
\end{center}
\end{minipage}
\end{tabular}
\end{table}
}

\section{Appendix: Proof of Theorem \ref{theo:3.3}}
\label{sec:5}
In this section, we prove Theorem \ref{theo:3.3} in the same way as Theorem 14.5.2 in \cite{KEPPE} using the It\^o Taylor expansion (the Wagner-Platen expansion).
We first introduce various notations.

\subsection{Notations in this section}
\label{sec:5.1}
\begin{itemize}
\item The It\^o integral operator is defined as follows: For any $j \in \{0,1,\ldots,d\}$ and $0 \leq s \leq t \leq T$,
$$
I_{s,t}^{(j)}f:=
\left\{
\begin{array}{ll}
\displaystyle{\int_{s}^{t}f(u){\rm d}u} & j=0 \\
\displaystyle{\int_{s}^{t}f(u){\rm d}W_{u}^{j}} & j \in \{1,2,\ldots,d\}
\end{array}
\right., \quad f \in {\rm Dom}(I_{s,t}^{(j)}),
$$
where
\begin{align*}
{\rm Dom}(I_{s,t}^{(j)})&:=
\left\{
\begin{array}{ll}
\left\{f=(f(t))_{t \geq 0}\,;\,\hspace{0.3cm}\text{For any }t>0, \int_{0}^{t}|f(u)|{\rm d}u<\infty.\right\}
 & j=0  \\
\left\{f=(f(t))_{t \geq 0}\,;\,
\begin{array}{ll}
f \text{ is a measurable process on } \Omega \text{ adapted to } ({\mathcal F}_{t})_{t \geq 0} \\
 \text{ such that for any } t>0, \int_{0}^{t}|f(u)|^{2}{\rm d}u<\infty \text{ a.s.}
\end{array}
\right\} & j \in \{1,2,\ldots,d\}.
\end{array}
\right.
\end{align*}
Here, we see that ${\rm Dom}(I_{s,t}^{(j)})$ is the common domain of the It\^o integral operator with respect to time since it does not depend on $s$ and $t$.
In this paper, we only treat the following double It\^o integral operator among the multiple It\^o integral operators:
For any $j_{1}.j_{2} \in \{0,1,\ldots,d\}$ and $0 \leq s \leq t \leq T$,
$$
I_{s,t}^{(j_{1},j_{2})}f:=I_{s,t}^{(j_{2})}I_{s,\bullet}^{(j_{1})}f, \quad f \in \text{Dom}(I_{s,t}^{(j_{1},j_{2})}):=\left\{f=(f(t))_{t \geq 0}\,;\,I_{s,\bullet}^{(j_{1})}f \in {\rm Dom}(I_{s,t}^{(j_{2})})\right\}.
$$
\item The It\^o coefficient function is defined as follows: For any $j \in \{0,1,\ldots,d\}$, $0 \leq s \leq T$ and $y \in {\mathbb R}^{d}$,
$$
{\mathcal L}^{j}f(s,y):=
\left\{
\begin{array}{ll}
\displaystyle{\frac{\partial f(s,y)}{\partial s}+\sum_{i'=1}^{d}b^{i'}(y)\frac{\partial f(s,y)}{\partial y^{i'}}+\frac{1}{2}\sum_{i'_{1},i'_{2},j' \in \{1,2,\ldots,d\}}\sigma_{j'}^{i'_{1}}(y)\sigma_{j'}^{i'_{2}}(y)\frac{\partial^{2}f(s,y)}{\partial y^{i'_{1}}\partial y^{i'_{2}}}} & j=0 \\
\displaystyle{\sum_{i'=1}^{d}\sigma_{j}^{i'}(y)\frac{\partial f(s,y)}{\partial y^{i'}}} & j \in \{1,2,\ldots,d\}
\end{array}
\right., \quad f \in {\rm Dom}({\mathcal L}^{j}),
$$
where
$$
{\rm Dom}({\mathcal L}^{j}):=
\left\{
\begin{array}{ll}
C^{1,2}([0,T] \times {\mathbb R}^{d}) & j=0 \\
C^{0,1}([0,T] \times {\mathbb R}^{d}) & j \in \{1,2,\ldots,d\}.
\end{array}
\right.
$$
\item The flow associated with the SDE is defined as follows: For any $0 \leq s \leq T$ and $y \in {\mathbb R}^{d}$,
$$
X_{t}^{s,y}=y+\sum_{j=1}^{d}\int_{s}^{t}\sigma_{j}(X_{u}^{s,y}){\rm d}W_{u}^{j}+\int_{s}^{t}b(X_{u}^{s,y}){\rm d}u, \quad s \leq t \leq T.
$$
Here, we denote $ \sigma_j $ is the $ j $-th column vector of the matrix $ \sigma $, and its $i$-th component is denoted by $\sigma_{j}^{i}$.
Similarly, the $i$-th component of the vector $b$ is denoted by $b^{i}$.
\item The functional associated with the flow for a fixed test function $f \in C_{P}^{4}({\mathbb R}^{d})$ is defined as follows: For any $0 \leq s \leq T$ and $y \in {\mathbb R}^{d}$,
$$
u(s,y):={\mathbb E}[f(X_{T}^{s,y})].
$$
\item We often use $\int_{t_{\ell-1}}^{t}\bullet\,{\rm d}W_{s}^{0}:=\int_{t_{\ell-1}}^{t}\bullet\,{\rm d}s$ and $\sigma_{0}:=b$ to simplify the argument.
\item We always denote by $ C $ the chosen constants wherever there is no risk of ambiguity.
\end{itemize}

As is well known, we obtain the following two statements under Assumption \ref{ass:3.1}.
Theorem \ref{theo:5.1} states that the flow is a solution of the Kolmogorov backward equation.
The expansion in Theorem \ref{theo:5.2} is called the (first order) It\^o-Taylor expansion or the Wagner-Platen expansion.

\begin{Theorem}[cf. Theorem 4.8.6 in \cite{KEPPE}]
\label{theo:5.1}
Suppose that Assumption \ref{ass:3.1} holds.
Then the functional $u$ associated with the flow satisfies the following two statements:
\begin{itemize}
\item For any $0 \leq s \leq T$, $u(s,\bullet) \in C_{P}^{4}({\mathbb R}^{d})$.
\item For any $0 \leq s \leq T$, $y \in {\mathbb R}^{d}$, ${\mathcal L}^{0}u(s,y)=0$.
\end{itemize}
\end{Theorem}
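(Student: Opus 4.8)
The plan is to prove the two assertions in turn, obtaining the backward equation ${\mathcal L}^{0}u=0$ from the martingale property of $t\mapsto u(t,X_{t}^{s,y})$ once sufficient regularity of $u$ is in hand. First I would establish the spatial regularity $u(s,\cdot)\in C_{P}^{4}({\mathbb R}^{d})$. Under Assumption \ref{ass:3.1}, and in particular the $C_{P}^{4}$ hypothesis \textbf{A1-3} on $\sigma$ and $b$, the flow $y\mapsto X_{t}^{s,y}$ is four times continuously differentiable in its initial datum, and the spatial derivatives $\partial_{y}X_{t}^{s,y},\,\partial_{y}^{2}X_{t}^{s,y},\ldots$ solve the linear variational SDEs obtained by formally differentiating the equation defining $X_{t}^{s,y}$. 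Applying the Burkholder--Davis--Gundy inequality together with Gronwall's lemma to these variational equations yields, for every $p\geq 1$ and every multi-index $\alpha$ with $|\alpha|\leq 4$, moment bounds of the form ${\mathbb E}[|\partial_{y}^{\alpha}X_{T}^{s,y}|^{p}]\leq C(1+|y|^{q})$ with $C,q$ independent of $s\in[0,T]$ (uniformity in $s$ following from time-homogeneity). Since $f\in C_{P}^{4}$, these bounds justify differentiating $u(s,y)={\mathbb E}[f(X_{T}^{s,y})]$ under the expectation via dominated convergence; the chain rule then writes $\partial_{y}^{\alpha}u$ as an expectation of finite sums of products of derivatives of $f$ evaluated at $X_{T}^{s,y}$ with derivatives of the flow, each factor of polynomial growth in $|y|$. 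This gives $u(s,\cdot)\in C_{P}^{4}({\mathbb R}^{d})$.

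To give ${\mathcal L}^{0}u$ meaning I would next check $u\in C^{1,2}([0,T]\times{\mathbb R}^{d})$. Because \eqref{eq:1.1} is time-homogeneous, $u(s,y)={\mathbb E}[f(X_{T-s}^{0,y})]$, so it suffices to treat $g(t,y):={\mathbb E}[f(X_{t}^{0,y})]$. Applying It\^o's formula to $f(X_{t}^{0,y})$, legitimate as $f\in C^{2}$, and taking expectations yields
\[
g(t,y)=f(y)+\int_{0}^{t}{\mathbb E}\!\left[({\mathcal L}^{0}f)(X_{r}^{0,y})\right]{\rm d}r,
\]
where ${\mathcal L}^{0}f$ reduces to its spatial part since $f$ does not depend on the time variable. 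As $f\in C_{P}^{4}$ and the coefficients obey the linear-growth bound \textbf{A1-2}, the integrand ${\mathcal L}^{0}f$ has polynomial growth, so continuity of $r\mapsto X_{r}^{0,y}$ with the standard moment bounds makes $r\mapsto{\mathbb E}[({\mathcal L}^{0}f)(X_{r}^{0,y})]$ continuous; hence $g$, and therefore $u$, is $C^{1}$ in time. Together with the spatial regularity this provides the $C^{1,2}$ smoothness needed below.

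Finally I would derive the equation itself. Fix $(s,y)$ and set $N_{t}:=u(t,X_{t}^{s,y})$ for $t\in[s,T]$. By the flow (Markov) property, $u(t,X_{t}^{s,y})={\mathbb E}[f(X_{T}^{s,y})\mid{\mathcal F}_{t}]$, so $N$ is a martingale. It\^o's formula, applicable by the $C^{1,2}$ regularity just obtained, shows that the finite-variation part of ${\rm d}N_{t}$ equals $({\mathcal L}^{0}u)(t,X_{t}^{s,y})\,{\rm d}t$. Since $N$ is a martingale this drift vanishes for almost every $t$; letting $t\downarrow s$ and using continuity together with $X_{s}^{s,y}=y$ gives ${\mathcal L}^{0}u(s,y)=0$.

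I expect the main obstacle to be the spatial-regularity step: proving fourth-order differentiability of the flow in the initial condition together with the \emph{uniform-in-$s$, polynomially growing} moment estimates on its derivatives. This is exactly where the structural hypotheses \textbf{A1-2} and \textbf{A1-3} are needed, and where the Burkholder--Davis--Gundy and Gronwall estimates for the variational SDEs must be executed with care; the time-regularity and the martingale argument are then comparatively routine.
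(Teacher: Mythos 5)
The paper does not actually prove Theorem \ref{theo:5.1}: it is quoted as a known result (``cf.\ Theorem 4.8.6 in \cite{KEPPE}''), stated immediately after the phrase ``As is well known\ldots''. Your sketch is a correct outline of precisely the standard argument behind that citation --- differentiability of the flow in its initial datum with polynomially growing moment bounds for the variational equations (where the Lipschitz condition {\bf A1-1}, giving bounded first derivatives, is also used alongside {\bf A1-2} and {\bf A1-3}), differentiation under the expectation to get $u(s,\cdot) \in C_{P}^{4}({\mathbb R}^{d})$, time regularity via time-homogeneity and It\^o's formula, and the Markov/martingale argument identifying the vanishing drift with ${\mathcal L}^{0}u$ --- so it matches the proof the paper delegates to the reference.
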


\begin{Theorem}[cf. Theorem 5.5.1 in \cite{KEPPE}]
\label{theo:5.2}
Suppose that Assumption \ref{ass:3.1} holds.
Then for any $\ell \in \{1,2,\ldots,n\}$, it holds that
$$
X_{t}^{t_{\ell-1},X_{t_{\ell-1}}^{(n)}}=\eta_{\ell}^{(n)}(t)+\sum_{j_{1},j_{2} \in \{0,1,\ldots,d\}}I_{t_{\ell-1},t}^{(j_{1},j_{2})}{\mathcal L}^{j_{1}}\sigma_{j_{2}}(X_{\bullet}^{t_{\ell-1},X_{t_{\ell-1}}^{(n)}}), \quad t_{\ell-1} \leq t \leq t_{\ell},
$$
where 
$$
\eta_{\ell}^{(n)}(t):=X_{t_{\ell-1}}^{(n)}+\sum_{j=0}^{d}I_{t_{\ell-1},t}^{(j)}\sigma_{j}(X_{t_{\ell-1}}^{(n)}), \quad t_{\ell-1} \leq t \leq t_{\ell}.
$$
\end{Theorem}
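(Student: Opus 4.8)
The plan is to obtain the claimed identity as a single iteration of It\^o's formula applied to the integral form of the flow. Writing the defining SDE for $X^{t_{\ell-1},y}$ in the compact form afforded by the conventions $\sigma_{0}:=b$ and $\mathrm{d}W^{0}_{u}:=\mathrm{d}u$, we have for $t_{\ell-1}\le t\le t_{\ell}$ and a fixed starting point $y\in\mathbb{R}^{d}$
\begin{equation*}
X_{t}^{t_{\ell-1},y}=y+\sum_{j=0}^{d}I_{t_{\ell-1},t}^{(j)}\sigma_{j}(X_{\bullet}^{t_{\ell-1},y}).
\end{equation*}
The key observation is that the It\^o coefficient operators $\mathcal{L}^{j_{1}}$ are defined precisely so that, for each scalar component $g$ of a sufficiently smooth coefficient, It\^o's formula reads
\begin{equation*}
g(X_{u}^{t_{\ell-1},y})=g(y)+\sum_{j_{1}=0}^{d}I_{t_{\ell-1},u}^{(j_{1})}\mathcal{L}^{j_{1}}g(X_{\bullet}^{t_{\ell-1},y}),\quad t_{\ell-1}\le u\le t_{\ell};
\end{equation*}
here $\mathcal{L}^{0}$ collects the bounded-variation part (the full second-order generator, the explicit $\partial_{s}$ term vanishing because $g$ carries no time dependence), while $\mathcal{L}^{j_{1}}$ for $j_{1}\ge 1$ collects the martingale parts, matching the drift/diffusion coefficients produced by the quadratic variation $\mathrm{d}\langle X^{i'_{1}},X^{i'_{2}}\rangle_{u}=\sum_{j'}\sigma^{i'_{1}}_{j'}\sigma^{i'_{2}}_{j'}\,\mathrm{d}u$.

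First I would apply this one-step expansion componentwise with $g=\sigma_{j_{2}}^{i}$, for every $j_{2}\in\{0,1,\ldots,d\}$, to each integrand $\sigma_{j_{2}}(X_{u}^{t_{\ell-1},y})$ above. Substituting the result back into the integral form and splitting the outer It\^o integral by linearity gives
\begin{align*}
X_{t}^{t_{\ell-1},y}
&=y+\sum_{j_{2}=0}^{d}I_{t_{\ell-1},t}^{(j_{2})}\sigma_{j_{2}}(y)
+\sum_{j_{1},j_{2}\in\{0,1,\ldots,d\}}I_{t_{\ell-1},t}^{(j_{2})}\bigl(I_{t_{\ell-1},\bullet}^{(j_{1})}\mathcal{L}^{j_{1}}\sigma_{j_{2}}(X_{\bullet}^{t_{\ell-1},y})\bigr)\\
&=y+\sum_{j=0}^{d}I_{t_{\ell-1},t}^{(j)}\sigma_{j}(y)
+\sum_{j_{1},j_{2}\in\{0,1,\ldots,d\}}I_{t_{\ell-1},t}^{(j_{1},j_{2})}\mathcal{L}^{j_{1}}\sigma_{j_{2}}(X_{\bullet}^{t_{\ell-1},y}),
\end{align*}
where $\sigma_{j_{2}}(y)$ is read as a constant (in $u$) process, and the last line uses the very definition $I_{s,t}^{(j_{1},j_{2})}=I_{s,t}^{(j_{2})}I_{s,\bullet}^{(j_{1})}$ of the double It\^o integral operator. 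Recognising the first two terms, after setting $y=X_{t_{\ell-1}}^{(n)}$, as $\eta_{\ell}^{(n)}(t)$ yields exactly the asserted expansion.

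Two points need care. The regularity needed to invoke It\^o's formula and to ensure $\sigma_{j_{2}}\in\mathrm{Dom}(\mathcal{L}^{j_{1}})$ is supplied by Assumption \ref{ass:3.1}: condition {\bf A1-3} gives $\sigma_{j}^{i}\in C_{P}^{4}(\mathbb{R}^{d})\subset C^{2}$, so $\sigma_{j_{2}}\in C^{1,2}=\mathrm{Dom}(\mathcal{L}^{0})$ and a fortiori $\sigma_{j_{2}}\in\mathrm{Dom}(\mathcal{L}^{j_{1}})$ for $j_{1}\ge 1$, while the linear growth {\bf A1-2} together with the standard moment bounds for the flow guarantees that every integrand lies in the domain of the relevant iterated It\^o integral operator, legitimising the interchanges used implicitly. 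The step I expect to be the genuine, if routine, obstacle is the substitution of the random, $\mathcal{F}_{t_{\ell-1}}$-measurable datum $X_{t_{\ell-1}}^{(n)}$ for the deterministic $y$: the expansion is derived pathwise for each fixed $y$, and to plug in $y=X_{t_{\ell-1}}^{(n)}$ one argues via the independence of the post-$t_{\ell-1}$ Brownian increments from $\mathcal{F}_{t_{\ell-1}}$ (condition {\bf A2-2}), first for simple $\mathcal{F}_{t_{\ell-1}}$-measurable random variables and then passing to the limit. Everything else is bookkeeping of indices.
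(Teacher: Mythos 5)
Your proof is correct, and it is essentially the canonical argument: the paper itself gives no proof of Theorem \ref{theo:5.2} (it is quoted as ``cf.\ Theorem 5.5.1 in \cite{KEPPE}''), and the proof in that reference is exactly your one-step iteration --- apply It\^o's formula componentwise to each coefficient $\sigma_{j_{2}}(X_{\bullet}^{t_{\ell-1},y})$, substitute back into the integral form of the flow, and collect the double integrals via the definition $I_{s,t}^{(j_{1},j_{2})}=I_{s,t}^{(j_{2})}I_{s,\bullet}^{(j_{1})}$. One minor remark: your appeal to {\bf A2-2} for handling the random initial datum is unnecessary and slightly misplaced (that condition concerns $\Delta Z_{\ell}^{(n)}$, and the theorem assumes only Assumption \ref{ass:3.1}); since $X_{t_{\ell-1}}^{(n)}$ is ${\mathcal F}_{t_{\ell-1}}$-measurable and $W$ is a Brownian motion with respect to $({\mathcal F}_{t})_{t\geq 0}$, the process $X^{t_{\ell-1},X_{t_{\ell-1}}^{(n)}}$ is itself a semimartingale on $[t_{\ell-1},t_{\ell}]$ to which It\^o's formula applies directly, so no approximation by simple ${\mathcal F}_{t_{\ell-1}}$-measurable random variables is needed.
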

Next, we give some lemmas that will be used to prove Theorem \ref{theo:3.3}.

\subsection{Lemmas}
\label{sec:5.2}
\begin{Lemma}
\label{lem:5.3}
For any $p \in {\mathbb N}$, there exists a positive constant $C$ such that for any $j \in \{0,1,\ldots,d\}$, $\ell \in \{1,2,\ldots,n\}$ and $f \in \cap_{t \in [t_{\ell-1},t_{\ell}]}{\rm Dom}(I_{t_{\ell-1},t}^{(j)})$,
$$
\sup_{t \in [t_{\ell-1},t_{\ell}]}{\mathbb E}\left[\left|I_{t_{\ell-1},t}^{(j)}f\right|^{2p}\,\Bigr|\,{\mathcal F}_{t_{\ell-1}}\right]
\leq
\left\{
\begin{array}{ll}
\displaystyle{\frac{C}{n^{2p-1}}\int_{t_{\ell-1}}^{t_{\ell}}{\mathbb E}\left[\left|f(s)\right|^{2p}\,\bigr|\,{\mathcal F}_{t_{\ell-1}}\right]{\rm d}s} & j=0 \\
\displaystyle{\frac{C}{n^{p-1}}\int_{t_{\ell-1}}^{t_{\ell}}{\mathbb E}\left[\left|f(s)\right|^{2p}\,\bigr|\,{\mathcal F}_{t_{\ell-1}}\right]{\rm d}s} & j \neq 0.
\end{array}
\right.
$$
\end{Lemma}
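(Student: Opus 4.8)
The plan is to treat the deterministic integral ($j=0$) and the It\^o integral ($j\neq 0$) separately; in both cases the estimate reduces, after controlling the appropriate norm, to a single application of H\"older's inequality in the time variable, exploiting that the mesh width $t_\ell-t_{\ell-1}$ equals $T/n$. No structural properties of $f$ beyond membership in the stated domain are needed, so the constant $C$ will depend only on $p$ (and $T$), not on $n$, $\ell$, $j$, or $f$.

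First I would treat $j=0$. Fix $t\in[t_{\ell-1},t_\ell]$ and apply H\"older's inequality with conjugate exponents $2p$ and $2p/(2p-1)$ to the Lebesgue integral $\int_{t_{\ell-1}}^{t}f(u)\,{\rm d}u$. Since the length of $[t_{\ell-1},t]$ is at most $T/n$, this gives $|I_{t_{\ell-1},t}^{(0)}f|^{2p}\leq (T/n)^{2p-1}\int_{t_{\ell-1}}^{t_\ell}|f(u)|^{2p}\,{\rm d}u$, where the upper limit may be pushed up to $t_\ell$ because the integrand is nonnegative. Taking the conditional expectation given ${\mathcal F}_{t_{\ell-1}}$, using Fubini's theorem to interchange it with the time integral, and then taking the supremum over $t$ yields the bound with constant $C=T^{2p-1}$.

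For $j\neq 0$ the argument is the same but with the quadratic variation in place of the pathlength. I would apply the conditional Burkholder--Davis--Gundy inequality over $[t_{\ell-1},t_\ell]$ to obtain, with a constant $C_p$ depending only on $p$,
$$
{\mathbb E}\left[\sup_{t\in[t_{\ell-1},t_\ell]}\left|I_{t_{\ell-1},t}^{(j)}f\right|^{2p}\,\Bigr|\,{\mathcal F}_{t_{\ell-1}}\right]\leq C_p\,{\mathbb E}\left[\left(\int_{t_{\ell-1}}^{t_\ell}|f(u)|^{2}\,{\rm d}u\right)^{p}\,\Bigr|\,{\mathcal F}_{t_{\ell-1}}\right].
$$
H\"older's inequality with exponents $p$ and $p/(p-1)$ (the case $p=1$ being immediate) bounds $\left(\int_{t_{\ell-1}}^{t_\ell}|f(u)|^{2}\,{\rm d}u\right)^{p}$ by $(T/n)^{p-1}\int_{t_{\ell-1}}^{t_\ell}|f(u)|^{2p}\,{\rm d}u$; combining this with Fubini and the elementary inequality $\sup_t{\mathbb E}[\,\cdot\mid{\mathcal F}_{t_{\ell-1}}]\leq{\mathbb E}[\sup_t\,\cdot\mid{\mathcal F}_{t_{\ell-1}}]$ delivers the claim with $C=C_pT^{p-1}$.

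The main obstacle is the passage to the \emph{conditional} BDG inequality, since the classical statement is unconditional. I would justify it by noting that, under Assumption \ref{ass:3.1}, the shifted process $(W^{j}_{t_{\ell-1}+s}-W^{j}_{t_{\ell-1}})_{s\geq 0}$ is a Brownian motion independent of ${\mathcal F}_{t_{\ell-1}}$, so one may apply the unconditional inequality after freezing the ${\mathcal F}_{t_{\ell-1}}$-measurable data (via a regular conditional probability), with the universal constant $C_p$ unchanged. Since the paper tracks constants only through the generic symbol $C$, it then suffices to record that the resulting $C$ depends on $p$ and $T$ alone.
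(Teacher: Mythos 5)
Your proposal is correct in substance, but for the stochastic-integral case it takes a genuinely different route from the paper. For $j=0$ your argument coincides with the paper's: the paper invokes Jensen's inequality, which, applied to the normalized measure $(t-t_{\ell-1})^{-1}{\rm d}u$ and the convex function $x\mapsto|x|^{2p}$, gives exactly your H\"older bound $|I^{(0)}_{t_{\ell-1},t}f|^{2p}\leq (T/n)^{2p-1}\int_{t_{\ell-1}}^{t_{\ell}}|f(u)|^{2p}{\rm d}u$. For $j\neq 0$, however, the paper's stated method is It\^o's formula applied to $|I^{(j)}_{t_{\ell-1},t}f|^{2p}$: after localization the stochastic-integral term is killed by ${\mathbb E}[\,\cdot\,|\,{\mathcal F}_{t_{\ell-1}}]$, leaving the drift term $p(2p-1)\int_{t_{\ell-1}}^{t}{\mathbb E}[|I^{(j)}_{t_{\ell-1},s}f|^{2p-2}|f(s)|^{2}\,|\,{\mathcal F}_{t_{\ell-1}}]{\rm d}s$, which is then closed by H\"older/Young and an iteration in $t$; this is elementary and works directly with conditional expectations. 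Your route, conditional Burkholder--Davis--Gundy plus H\"older with exponents $p$ and $p/(p-1)$, is shorter and even yields the stronger estimate with the supremum inside the conditional expectation; both deliver the claimed rates $n^{-(2p-1)}$ and $n^{-(p-1)}$ with $C$ depending only on $p$ and $T$.

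The one weak point is your justification of the \emph{conditional} BDG inequality. The independence-plus-freezing argument is not rigorous as stated: the integrand $f$ is merely $({\mathcal F}_t)$-adapted, and $({\mathcal F}_t)$ may be strictly larger than the filtration generated by ${\mathcal F}_{t_{\ell-1}}$ together with the shifted Brownian motion, so $f$ on $[t_{\ell-1},t_{\ell}]$ need not be a functional of ``frozen'' ${\mathcal F}_{t_{\ell-1}}$-data and the future increments of $W^{j}$; also, Assumption \ref{ass:3.1} plays no role here (what matters is only that $W$ is an $({\mathcal F}_t)$-Brownian motion, i.e.\ {\bf A2}-type structure of the filtration). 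The standard repair is the indicator trick: for any $A\in{\mathcal F}_{t_{\ell-1}}$ one has ${\bf 1}_{A}\,I^{(j)}_{t_{\ell-1},t}f=I^{(j)}_{t_{\ell-1},t}({\bf 1}_{A}f)$, so the unconditional BDG inequality applied to the continuous local martingale $I^{(j)}_{t_{\ell-1},\bullet}({\bf 1}_{A}f)$ gives
${\mathbb E}\bigl[{\bf 1}_{A}\sup_{t}|I^{(j)}_{t_{\ell-1},t}f|^{2p}\bigr]\leq C_{p}\,{\mathbb E}\bigl[{\bf 1}_{A}\bigl(\int_{t_{\ell-1}}^{t_{\ell}}|f(u)|^{2}{\rm d}u\bigr)^{p}\bigr]$
for every such $A$, which is precisely the conditional statement almost surely. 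With that replacement your argument is complete and correct.
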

\begin{proof}
The statement follows by using Jensen's inequality in $j=0$ and using It\^o's formula in $j \neq 0$.
\end{proof}

\begin{Lemma}
\label{lem:5.4}
Suppose that Assumption \ref{ass:3.1} holds.
Then for any $p \in {\mathbb N}$, there exists a positive constant $C$ such that for any $\ell \in \{1,2,\ldots,n\}$,
$$
\sup_{t \in [t_{\ell-1},t_{\ell}]}{\mathbb E}\left[\Bigl|X_{t}^{t_{\ell-1},X_{t_{\ell-1}}^{(n)}}\Bigr|^{2p}\,\Bigr|\,{\mathcal F}_{t_{\ell-1}}\right]
\leq C\left(1+\left|X_{t_{\ell-1}}^{(n)}\right|^{2p}\right)
$$
and
$$
{\mathbb E}\left[\Bigl|X_{t_{\ell}}^{t_{\ell-1},X_{t_{\ell-1}}^{(n)}}-X_{t_{\ell-1}}^{(n)}\Bigr|^{2p}\,\Bigr|\,{\mathcal F}_{t_{\ell-1}}\right]
\leq \frac{C}{n^{p}}\left(1+\left|X_{t_{\ell-1}}^{(n)}\right|^{2p}\right).
$$
\end{Lemma}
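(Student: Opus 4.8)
The plan is to read off both bounds from the integral form of the flow recorded in Section~\ref{sec:5.1}, namely
$$X_{t}=X_{t_{\ell-1}}^{(n)}+\sum_{j=0}^{d}I_{t_{\ell-1},t}^{(j)}\sigma_{j}(X_{\bullet}),\qquad t\in[t_{\ell-1},t_{\ell}],$$
where I abbreviate $X_{t}:=X_{t}^{t_{\ell-1},X_{t_{\ell-1}}^{(n)}}$ and use the convention $\sigma_{0}:=b$, and then to feed the $L^{2p}$-bounds of Lemma~\ref{lem:5.3} into a Gr\"onwall argument, with the linear growth condition {\bf A1-2} controlling the coefficients along the path. Writing $g(t):={\mathbb E}[|X_{t}|^{2p}\mid{\mathcal F}_{t_{\ell-1}}]$, the two estimates differ only in how carefully one tracks the powers of $n$.

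For the first estimate I would apply the elementary inequality $|\sum_{k=0}^{d+1}a_{k}|^{2p}\leq(d+2)^{2p-1}\sum_{k=0}^{d+1}|a_{k}|^{2p}$ to the $d+2$ summands above, take conditional expectation, and bound each It\^o-integral term by Lemma~\ref{lem:5.3} applied on the subinterval $[t_{\ell-1},t]$ (its proof gives the same $n$-dependence there, since $t-t_{\ell-1}\leq T/n$, and lets one integrate only up to $t$). Because $n\geq1$ forces $n^{-(p-1)}\vee n^{-(2p-1)}\leq1$, and because {\bf A1-2} gives $|\sigma_{j}(y)|^{2p}\leq C(1+|y|^{2p})$, this yields
$$g(t)\leq C\left(1+\left|X_{t_{\ell-1}}^{(n)}\right|^{2p}\right)+C\int_{t_{\ell-1}}^{t}g(s)\,{\rm d}s,\qquad t\in[t_{\ell-1},t_{\ell}].$$
Gr\"onwall's lemma then gives $g(t)\leq C(1+|X_{t_{\ell-1}}^{(n)}|^{2p})e^{C(t-t_{\ell-1})}$; bounding $e^{C(t-t_{\ell-1})}\leq e^{CT}$ and taking the supremum over $t\in[t_{\ell-1},t_{\ell}]$ proves the first claim.

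For the second estimate I would instead start from
$$X_{t_{\ell}}-X_{t_{\ell-1}}^{(n)}=\sum_{j=1}^{d}I_{t_{\ell-1},t_{\ell}}^{(j)}\sigma_{j}(X_{\bullet})+I_{t_{\ell-1},t_{\ell}}^{(0)}b(X_{\bullet}),$$
and combine the same power inequality with Lemma~\ref{lem:5.3}, now keeping the factors $n^{-(p-1)}$ (diffusion terms) and $n^{-(2p-1)}$ (drift term) explicit. Inserting {\bf A1-2} and then the already-proved first estimate bounds each integrand ${\mathbb E}[|\sigma_{j}(X_{s})|^{2p}\mid{\mathcal F}_{t_{\ell-1}}]$ by $C(1+|X_{t_{\ell-1}}^{(n)}|^{2p})$, while the integration over $[t_{\ell-1},t_{\ell}]$ supplies one further factor $T/n$. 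The $d$ diffusion terms thus scale like $n^{-p}$ and the drift term like $n^{-2p}\leq n^{-p}$, so the diffusion contribution dominates and the bound $Cn^{-p}(1+|X_{t_{\ell-1}}^{(n)}|^{2p})$ follows.

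The main obstacle is not any individual estimate but the a priori finiteness of $g(t)$ that Gr\"onwall's lemma tacitly requires. I would secure it by the usual localization: set $\tau_{R}:=\inf\{t\geq t_{\ell-1}\,;\,|X_{t}|\geq R\}$, run the whole argument with $X_{t}$ replaced by the bounded process $X_{t\wedge\tau_{R}}$ so that all conditional moments are finite, obtain a bound uniform in $R$, and pass to the limit $R\to\infty$ by Fatou's lemma. The remaining ingredients --- the power inequality, the growth bound, and Gr\"onwall --- are entirely routine.
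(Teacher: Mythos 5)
Your proof is correct, and for the second estimate it coincides with the paper's argument (decompose $X_{t_{\ell}}^{t_{\ell-1},X_{t_{\ell-1}}^{(n)}}-X_{t_{\ell-1}}^{(n)}$ into the It\^o integrals, apply the power inequality, Lemma \ref{lem:5.3}, \textbf{A1-2} and the first estimate, with the drift contributing $n^{-2p}$ and the diffusion $n^{-p}$). For the first estimate, however, you take a genuinely different route. The paper applies It\^o's formula directly to $\bigl|X_{t}^{t_{\ell-1},X_{t_{\ell-1}}^{(n)}}\bigr|^{2p}$, discards the stochastic-integral terms via the martingale property after conditioning on ${\mathcal F}_{t_{\ell-1}}$, and uses the Cauchy--Schwarz inequality with \textbf{A1-2} to reach the same Gronwall inequality that you obtain; you instead work from the integral form of the flow, pull the $2p$-th power through the $d+2$ summands by convexity, and recycle Lemma \ref{lem:5.3}. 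Your strengthening of Lemma \ref{lem:5.3} (integration only up to $t$ rather than $t_{\ell}$) is legitimate: besides following from its proof as you note, it follows from the lemma as stated by replacing $f$ with $f{\bf 1}_{[t_{\ell-1},t]}$. What the paper's route buys is marginally cleaner constants: It\^o plus Cauchy--Schwarz avoids the factor $(d+2)^{2p-1}$, so the constant depends on $d$ only through \textbf{A1-2}; since the lemma holds $d$ fixed and merely asserts existence of $C$ (and the paper itself incurs $d$-dependent factors of the same kind in its proof of the second estimate), this costs you nothing here. What your route buys is modularity and rigor: you avoid a second application of It\^o's formula by reusing Lemma \ref{lem:5.3}, and your localization by the stopping times $\tau_{R}$ together with Fatou's lemma supplies the a priori finiteness that Gronwall tacitly requires --- a point the paper glosses over, since dropping its local-martingale terms under conditional expectation needs exactly the same justification.
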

\begin{proof}
Fix $t \in [t_{\ell-1},t_{\ell}]$.
Using It\^o's formula, we obtain
\begin{align*}
\Bigl|X_{t}^{t_{\ell-1},X_{t_{\ell-1}}^{(n)}}\Bigr|^{2p}
&=\left|X_{t_{\ell-1}}^{(n)}\right|^{2p}+2p\sum_{i,j \in \{1,2,\ldots,d\}}\int_{t_{\ell-1}}^{t}\Bigl|X_{s}^{t_{\ell-1},X_{t_{\ell-1}}^{(n)}}\Bigr|^{2(p-1)}(X_{s}^{t_{\ell-1},X_{t_{\ell-1}}^{(n)}})^{i}\sigma_{j}^{i}(X_{s}^{t_{\ell-1},X_{t_{\ell-1}}^{(n)}}){\rm d}W_{s}^{j} \\
&\hspace{0.38cm}+2p\sum_{i=1}^{d}\int_{t_{\ell-1}}^{t}\Bigl|X_{s}^{t_{\ell-1},X_{t_{\ell-1}}^{(n)}}\Bigr|^{2(p-1)}(X_{s}^{t_{\ell-1},X_{t_{\ell-1}}^{(n)}})^{i}b^{i}(X_{s}^{t_{\ell-1},X_{t_{\ell-1}}^{(n)}}){\rm d}s \\
&\hspace{0.38cm}+2p(p-1)\sum_{i,j,k \in \{1,2,\ldots,d\}}\int_{t_{\ell-1}}^{t}\Bigl|X_{s}^{t_{\ell-1},X_{t_{\ell-1}}^{(n)}}\Bigr|^{2(p-2)}(X_{s}^{t_{\ell-1},X_{t_{\ell-1}}^{(n)}})^{i}(X_{s}^{t_{\ell-1},X_{t_{\ell-1}}^{(n)}})^{j}\sigma_{k}^{i}(X_{s}^{t_{\ell-1},X_{t_{\ell-1}}^{(n)}})\sigma_{k}^{j}(X_{s}^{t_{\ell-1},X_{t_{\ell-1}}^{(n)}}){\rm d}s \\
&\hspace{0.38cm}+p\sum_{i,k \in \{1,2,\ldots,d\}}\int_{t_{\ell-1}}^{t}\Bigl|X_{s}^{t_{\ell-1},X_{t_{\ell-1}}^{(n)}}\Bigr|^{2(p-1)}\Bigl|\sigma_{k}^{i}(X_{s}^{t_{\ell-1},X_{t_{\ell-1}}^{(n)}})\Bigr|^{2}{\rm d}s.
\end{align*}
Then by using the Cauchy-Schwarz inequality, {\bf A2-1} and the martingale property, we have
\begin{align*}
{\mathbb E}\left[\Bigl|X_{t}^{t_{\ell-1},X_{t_{\ell-1}}^{(n)}}\Bigr|^{2p}\,\Bigr|\,{\mathcal F}_{t_{\ell-1}}\right]
&\leq \left|X_{t_{\ell-1}}^{(n)}\right|^{2p}
+2p\int_{t_{\ell-1}}^{t}{\mathbb E}\left[\Bigl|X_{s}^{t_{\ell-1},X_{t_{\ell-1}}^{(n)}}\Bigr|^{2p-1}\Bigl|b(X_{s}^{t_{\ell-1},X_{t_{\ell-1}}^{(n)}})\Bigr|\,\Bigr|\,{\mathcal F}_{t_{\ell-1}}\right]{\rm d}s \\
&\hspace{0.38cm}+p(2p-1)\int_{t_{\ell-1}}^{t}{\mathbb E}\left[\Bigl|X_{s}^{t_{\ell-1},X_{t_{\ell-1}}^{(n)}}\Bigr|^{2(p-1)}\Bigl|\sigma(X_{s}^{t_{\ell-1},X_{t_{\ell-1}}^{(n)}})\Bigr|^{2}\,\Bigr|\,{\mathcal F}_{t_{\ell-1}}\right]{\rm d}s.
\end{align*}
Thus by {\bf A1-2}, we obtain
\begin{align*}
{\mathbb E}\left[\Bigl|X_{t}^{t_{\ell-1},X_{t_{\ell-1}}^{(n)}}\Bigr|^{2p}\,\Bigr|\,{\mathcal F}_{t_{\ell-1}}\right]
&\leq \left|X_{t_{\ell-1}}^{(n)}\right|^{2p}+C\int_{t_{\ell-1}}^{t}{\mathbb E}\left[\Bigl|X_{s}^{t_{\ell-1},X_{t_{\ell-1}}^{(n)}}\Bigr|^{2(p-1)}\biggl(1+\Bigl|X_{s}^{t_{\ell-1},X_{t_{\ell-1}}^{(n)}}\Bigr|\biggr)^{2}\,\Bigr|\,{\mathcal F}_{t_{\ell-1}}\right]{\rm d}s \\
&\leq \left|X_{t_{\ell-1}}^{(n)}\right|^{2p}+C\int_{t_{\ell-1}}^{t}\left(1+{\mathbb E}\left[\Bigl|X_{s}^{t_{\ell-1},X_{t_{\ell-1}}^{(n)}}\Bigr|^{2p}\,\Bigr|\,{\mathcal F}_{t_{\ell-1}}\right]\right){\rm d}s \\
&=\left|X_{t_{\ell-1}}^{(n)}\right|^{2p}+C(t-t_{\ell-1})+C\int_{t_{\ell-1}}^{t}{\mathbb E}\left[\Bigl|X_{s}^{t_{\ell-1},X_{t_{\ell-1}}^{(n)}}\Bigr|^{2p}\,\Bigr|\,{\mathcal F}_{t_{\ell-1}}\right]{\rm d}s.
\end{align*}
Hence it follows by using the Gronwall inequality that
\begin{align*}
{\mathbb E}\left[\Bigl|X_{t}^{t_{\ell-1},X_{t_{\ell-1}}^{(n)}}\Bigr|^{2p}\,\Bigr|\,{\mathcal F}_{t_{\ell-1}}\right]
&\leq \left|X_{t_{\ell-1}}^{(n)}\right|^{2p}+C(t-t_{\ell-1})+C\int_{t_{\ell-1}}^{t}e^{C(t-s)}\left(\left|X_{t_{\ell-1}}^{(n)}\right|^{2p}+C(s-t_{\ell-1})\right){\rm d}s \\
&\leq \left|X_{t_{\ell-1}}^{(n)}\right|^{2p}+C(t-t_{\ell-1})+C(t-t_{\ell-1})e^{C(t-t_{\ell-1})}\left(\left|X_{t_{\ell-1}}^{(n)}\right|^{2p}+C(t-t_{\ell-1})\right) \\
&=\left(1+C(t-t_{\ell-1})e^{C(t-t_{\ell-1})}\right)\left(\left|X_{t_{\ell-1}}^{(n)}\right|^{2p}+C(t-t_{\ell-1})\right) \\
&\leq \left(1+CTe^{CT}\right)\left(1 \vee CT\right)\left(1+\left|X_{t_{\ell-1}}^{(n)}\right|^{2p}\right),
\end{align*}
which concludes the first statement.

Next, we first have
\begin{align*}
\left|\sum_{j=0}^{d}\int_{t_{\ell-1}}^{t_{\ell}}\sigma_{j}(X_{s}^{t_{\ell-1},X_{t_{\ell-1}}^{(n)}}){\rm d}W_{s}^{j}\right|^{2p}
&=\left(\sum_{i=1}^{d}\left|\sum_{j=0}^{d}\int_{t_{\ell-1}}^{t_{\ell}}\sigma_{j}^{i}(X_{s}^{t_{\ell-1},X_{t_{\ell-1}}^{(n)}}){\rm d}W_{s}^{j}\right|^{2}\right)^{p} \\
&\leq C\sum_{i=1}^{d}\sum_{j=0}^{d}\left|\int_{t_{\ell-1}}^{t_{\ell}}\sigma_{j}^{i}(X_{s}^{t_{\ell-1},X_{t_{\ell-1}}^{(n)}}){\rm d}W_{s}^{j}\right|^{2p}.
\end{align*}
Then from Lemma \ref{lem:5.3}, {\bf A1-2} and the first statement, we obtain
\begin{align*}
{\mathbb E}\left[\Bigl|X_{t_{\ell}}^{t_{\ell-1},X_{t_{\ell-1}}^{(n)}}-X_{t_{\ell-1}}^{(n)}\Bigr|^{2p}\,\Bigr|\,{\mathcal F}_{t_{\ell-1}}\right]
&\leq \frac{C}{n^{p-1}}\sum_{i=1}^{d}\sum_{j=0}^{d}\int_{t_{\ell-1}}^{t_{\ell}}{\mathbb E}\biggl[\Bigl|\sigma_{j}^{i}(X_{s}^{t_{\ell-1},X_{t_{\ell-1}}^{(n)}})\Bigr|^{2p}\,\Bigr|\,{\mathcal F}_{t_{\ell-1}}\biggr]{\rm d}s \\
&\leq \frac{C}{n^{p-1}}\int_{t_{\ell-1}}^{t_{\ell}}{\mathbb E}\biggl[1+\Bigl|X_{t}^{t_{\ell-1},X_{t_{\ell-1}}^{(n)}}\Bigr|^{2p}\,\Bigr|\,{\mathcal F}_{t_{\ell-1}}\biggr]{\rm d}s \\
&\leq  \frac{C}{n^{p}}\left(1+\left|X_{t_{\ell-1}}^{(n)}\right|^{2p}\right),
\end{align*}
which concludes the second statement.
\end{proof}

\begin{Lemma}
\label{lem:5.5}
Suppose that Assumptions \ref{ass:3.1} and \ref{ass:3.2} hold.
Then there exists a positive constant $C$ such that for any $k \in \{1,2,3\}$, $i_{1},i_{2},\ldots,i_{k} \in \{1,2,\ldots,d\}$ and $\ell \in \{1,2,\ldots,n\}$,
\begin{align}
\label{eq:5.1}
\left|{\mathbb E}\left[\prod_{j=1}^{k}(X_{t_{\ell}}^{t_{\ell-1},X_{t_{\ell-1}}^{(n)}}-X_{t_{\ell-1}}^{(n)})^{i_{j}}-\prod_{j=1}^{k}(\eta_{\ell}^{(n)}(t_{\ell})-X_{t_{\ell-1}}^{(n)})^{i_{j}}\,\Bigggr|\,{\mathcal F}_{t_{\ell-1}}\right]\right| \leq \frac{C}{n^{2}}\left(1+\left|X_{t_{\ell-1}}^{(n)}\right|^{6(r+1)}\right),
\end{align}
where
$$
r:=\min\left\{r \in {\mathbb N}\cup \{0\} \,;\, 
\begin{array}{ll}
\text{There exists a positive constant } C \text{ such that for any } y \in {\mathbb R}^{d}, \\
\displaystyle{\max_{\substack{i \in \{1,2,\ldots,d\} \\ j \in \{0,1,\ldots,d\}}}\max_{\substack{i_{1},i_{2},\ldots,i_{k} \in \{1,2,\ldots,d\} \\ k \in \{1,2\}}}\left|\frac{\partial^{k}\sigma_{j}^{i}(y)}{\partial y^{i_{1}}\partial y^{i_{2}}\cdots\partial y^{i_{k}}}\right| \leq C\left(1+|y|^{2r}\right)}.
\end{array}
\right\}.
$$
\end{Lemma}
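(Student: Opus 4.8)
The plan is to follow the scheme of Theorem 14.5.2 in \cite{KEPPE}: expand the true one-step increment by the It\^o--Taylor (Wagner--Platen) expansion, isolate the double-integral remainder, and compare the moments of order $k \in \{1,2,3\}$ against those of the Euler increment $\eta_\ell^{(n)}(t_\ell)-X_{t_{\ell-1}}^{(n)}$. Throughout, $X_{t_{\ell-1}}^{(n)}$ is $\mathcal{F}_{t_{\ell-1}}$-measurable by {\bf A2-1}, so conditioning on $\mathcal{F}_{t_{\ell-1}}$ freezes it; note that $\eta_\ell^{(n)}$ is built from genuine Brownian increments, so essentially only the adaptedness part of Assumption \ref{ass:3.2} enters.

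First, by Theorem \ref{theo:5.2}, write $R_\ell := X_{t_\ell}^{t_{\ell-1},X_{t_{\ell-1}}^{(n)}}-\eta_\ell^{(n)}(t_\ell) = \sum_{j_1,j_2\in\{0,1,\ldots,d\}}I_{t_{\ell-1},t_\ell}^{(j_1,j_2)}\mathcal{L}^{j_1}\sigma_{j_2}(X_\bullet^{t_{\ell-1},X_{t_{\ell-1}}^{(n)}})$. Set $A_j := (\eta_\ell^{(n)}(t_\ell)-X_{t_{\ell-1}}^{(n)})^{i_j}$ and $B_j := R_\ell^{i_j}$, so that $(X_{t_\ell}^{t_{\ell-1},X_{t_{\ell-1}}^{(n)}}-X_{t_{\ell-1}}^{(n)})^{i_j} = A_j + B_j$. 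Expanding $\prod_{j=1}^k(A_j+B_j) - \prod_{j=1}^k A_j$ yields a sum of monomials, each carrying at least one factor $B$, and the estimate follows by bounding the conditional expectation of each such monomial by $Cn^{-2}(1+|X_{t_{\ell-1}}^{(n)}|^{6(r+1)})$.

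Second, I would record the conditional moment orders in $L^{2p}$-norm form. Since $A_j$ is a sum of single integrals of the $\mathcal{F}_{t_{\ell-1}}$-measurable quantities $\sigma_j(X_{t_{\ell-1}}^{(n)})$, Lemma \ref{lem:5.3} together with {\bf A1-2} gives $\mathbb{E}[|A_j|^{2p}\mid\mathcal{F}_{t_{\ell-1}}]^{1/(2p)}\leq Cn^{-1/2}(1+|X_{t_{\ell-1}}^{(n)}|)$. Applying Lemma \ref{lem:5.3} twice to each double integral, and using the polynomial growth of $\mathcal{L}^{j_1}\sigma_{j_2}$ along the flow (its derivatives being controlled via {\bf A1-3}) with the moment bound of Lemma \ref{lem:5.4}, gives $\mathbb{E}[|B_j|^{2p}\mid\mathcal{F}_{t_{\ell-1}}]^{1/(2p)}\leq Cn^{-1}(1+|X_{t_{\ell-1}}^{(n)}|^{2(r+1)})$. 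Thus each $A$-factor is of order $n^{-1/2}$ and each $B$-factor of order $n^{-1}$, with growth exponent $2(r+1)$ per $B$-factor; the exponent $6(r+1)$ in the statement is the crude uniform bound forced by the triple-$B$ monomial when $k=3$. With these orders, for $k=3$ every monomial ($A^{2}B$, $AB^{2}$, $B^{3}$) is already of order $n^{-2}$ or smaller by H\"older's inequality, so that case is immediate.

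Third --- and this is the main obstacle --- the monomial $\mathbb{E}[B_1\mid\mathcal{F}_{t_{\ell-1}}]$ ($k=1$) and the cross terms $\mathbb{E}[A_1B_2\mid\mathcal{F}_{t_{\ell-1}}]$, $\mathbb{E}[B_1A_2\mid\mathcal{F}_{t_{\ell-1}}]$ ($k=2$) are only of naive order $n^{-1}$ and $n^{-3/2}$, so the factor-counting bound is off by $n^{1/2}$ and genuine cancellation must be exploited. For $k=1$, the martingale property kills every double integral in $R_\ell$ whose outer integrator is a Brownian motion, and the remaining $(j_1,0)$-terms with $j_1\neq 0$ vanish in conditional mean because their inner Brownian integral has zero mean; only the $(0,0)$ term survives, and it is genuinely $O(n^{-2})$. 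For $k=2$, I would split $A_1$ into its Brownian part (order $n^{-1/2}$) and its drift part (order $n^{-1}$): the drift part paired with $B_2$ is $O(n^{-2})$ by H\"older, while for the Brownian part I freeze the integrand $\mathcal{L}^{j_1}\sigma_{j_2}(X_\bullet)$ at its value at $X_{t_{\ell-1}}^{(n)}$. The frozen Brownian--Brownian double integral against a single Brownian increment has vanishing conditional expectation (orthogonality of the first and second Wiener chaos, equivalently zero mean of the inner integral), and the freezing remainder is controlled by the mean value theorem applied to $\mathcal{L}^{j_1}\sigma_{j_2}$ (using the polynomially bounded derivatives of $\sigma$ from {\bf A1-3}), which contributes an extra factor $n^{-1/2}$ and restores the order $n^{-2}$; the double integrals in $B_2$ carrying a time integral are already small enough against the $n^{-1/2}$ Brownian part. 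Collecting the polynomial growth factors through these H\"older and mean-value estimates, and taking $6(r+1)$ as a uniform (non-optimal) exponent, yields the claimed bound.
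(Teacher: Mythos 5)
Your proposal is correct, and most of its skeleton coincides with the paper's own proof: the same $A$/$B$ decomposition obtained from Theorem \ref{theo:5.2}, the same martingale-property argument for $k=1$ (every term except the $(0,0)$ double integral has vanishing conditional mean, and the surviving term is an iterated time integral of order $n^{-2}$), and the same H\"older/Cauchy--Schwarz factor counting for the $BB$ term of $k=2$ and for all monomials of $k=3$, with Lemmas \ref{lem:5.3} and \ref{lem:5.4} supplying the moment bounds. The genuine difference is your treatment of the critical $k=2$ cross terms. The paper invokes covariance identities for It\^o integrals with general \emph{adapted} integrands (Proposition 2-1.1, 2-1.2 in \cite{INWS}): conditionally on ${\mathcal F}_{t_{\ell-1}}$, the product ${\mathbb E}[I^{(j)}_{t_{\ell-1},t_{\ell}}f \cdot I^{(j_1,j_2)}_{t_{\ell-1},t_{\ell}}g \mid {\mathcal F}_{t_{\ell-1}}]$ vanishes identically whenever $j_1,j_2 \geq 1$, because the bracket computation leaves only the conditional mean of the inner Brownian integral; hence only the pairings $(j;0,0)$, $(j;j,0)$, $(j;0,j)$ survive, each an explicit iterated time integral of order $n^{-2}$, with no freezing and no error term. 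You obtain the same cancellation by freezing the coefficient at $X^{(n)}_{t_{\ell-1}}$, killing the frozen term by first/second chaos orthogonality, and absorbing the freezing error through a mean-value estimate that gains the missing $n^{-1/2}$; this is more elementary (orthogonality is needed only for constant integrands, i.e.\ a plain Gaussian computation) but strictly less sharp, since the dangerous term no longer vanishes exactly and you pay with extra polynomial growth and extra regularity bookkeeping. Your route does close, but one point deserves to be made explicit: the lemma's exponent $6(r+1)$ is phrased in terms of an $r$ that controls only the \emph{first and second} derivatives of $\sigma$, whereas {\bf A1-3} bounds higher derivatives with a priori unrelated exponents. Your mean-value step is admissible only because the coefficients you freeze are ${\mathcal L}^{j_1}\sigma_{j_2}$ with $j_1,j_2 \geq 1$, whose gradient involves nothing beyond $\sigma$, its first and its second derivatives, so the resulting growth (at worst of order $\max(4r,2r+1)+2$) stays below $6(r+1)$. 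Had you needed to freeze a coefficient of the form ${\mathcal L}^{0}\sigma_{j}$, its gradient would bring in third derivatives of $\sigma$, whose polynomial growth is not tied to $r$, and the stated exponent would not follow; fortunately, as you note, those terms carry a time integral and are already $O(n^{-2})$ by H\"older against the $n^{-1/2}$ Brownian factor.
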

\begin{proof}
\begin{itemize}
\item First, we show \eqref{eq:5.1} for $k=1$.
From Theorem \ref{theo:5.2} and the martingale property, we obtain
\begin{align*}
&{\mathbb E}\left[(X_{t_{\ell}}^{t_{\ell-1},X_{t_{\ell-1}}^{(n)}}-X_{t_{\ell-1}}^{(n)})^{i_{1}}-(\eta_{\ell}^{(n)}(t_{\ell})-X_{t_{\ell-1}}^{(n)})^{i_{1}}\,\Bigr|\,{\mathcal F}_{t_{\ell-1}}\right] \\
&={\mathbb E}\left[(X_{t_{\ell}}^{t_{\ell-1},X_{t_{\ell-1}}^{(n)}}-\eta_{\ell}^{(n)}(t_{\ell}))^{i_{1}}\,\Bigr|\,{\mathcal F}_{t_{\ell-1}}\right]
=\sum_{j_{1},j_{2} \in \{0,1,\ldots,d\}}{\mathbb E}\left[I_{t_{\ell-1},t_{\ell}}^{(j_{1},j_{2})}{\mathcal L}^{j_{1}}\sigma_{j_{2}}^{i_{1}}(X_{\bullet}^{t_{\ell-1},X_{t_{\ell-1}}^{(n)}})\,\Bigr|\,{\mathcal F}_{t_{\ell-1}}\right] \\
&={\mathbb E}\left[I_{t_{\ell-1},t_{\ell}}^{(0,0)}{\mathcal L}^{0}b^{i_{1}}(X_{\bullet}^{t_{\ell-1},X_{t_{\ell-1}}^{(n)}})\,\Bigr|\,{\mathcal F}_{t_{\ell-1}}\right]
=\int_{t_{\ell-1}}^{t_{\ell}}\int_{t_{\ell-1}}^{s_{2}}{\mathbb E}\left[{\mathcal L}^{0}b^{i_{1}}(X_{s_{1}}^{t_{\ell-1},X_{t_{\ell-1}}^{(n)}})\,\Bigr|\,{\mathcal F}_{t_{\ell-1}}\right]{\rm d}s_{1}{\rm d}s_{2}.
\end{align*}
On the other hand, by using the Cauchy-Schwarz inequality, {\bf A1-2} and {\bf A1-3}, we have for any $i \in \{1,2,\ldots,d\}$, $j_{1},j_{2} \in \{0,1,\ldots,d\}$ and $y \in {\mathbb R}^{d}$,
\begin{align}
\label{eq:5.2}
\left|{\mathcal L}^{j_{1}}\sigma_{j_{2}}^{i}(y)\right|^{2}
&\leq C\left(\left|\sum_{i'=1}^{d}\sigma_{j_{1}}^{i'}(y)\frac{\partial \sigma_{j_{2}}^{i}(y)}{\partial y^{i'}}\right|^{2}+\left|\sum_{i'_{1},i'_{2},j' \in \{1,2,\ldots,d\}}\sigma_{j'}^{i'_{1}}(y)\sigma_{j'}^{i'_{2}}(y)\frac{\partial \sigma_{j_{2}}^{i}(y)}{\partial y^{i'_{1}}\partial y^{i'_{2}}}\right|^{2}\right) \\ \notag
&\leq C\left(|\sigma_{j_{1}}(y)|^{2}\sum_{i'=1}^{d}\left|\frac{\partial\sigma_{j_{2}}^{i}(y)}{\partial y^{i'}}\right|^{2}+|\sigma(y)|^{4}\sum_{i'_{1},i'_{2}=1}^{d}\left|\frac{\partial^{2}\sigma_{j_{2}}^{i}(y)}{\partial y^{i'_{1}}\partial y^{i'_{2}}}\right|^{2}\right) \\ \notag
&\leq C\left(1+|y|^{4(r+1)}\right).
\end{align}
Thus it follows from Lemma \ref{lem:5.4} that
$$
\left|{\mathbb E}\left[(X_{t_{\ell}}^{t_{\ell-1},X_{t_{\ell-1}}^{(n)}}-X_{t_{\ell-1}}^{(n)})^{i_{1}}-(\eta_{\ell}^{(n)}(t_{\ell})-X_{t_{\ell-1}}^{(n)})^{i_{1}}\,\Bigr|\,{\mathcal F}_{t_{\ell-1}}\right]\right|
\leq \frac{C}{n^{2}}\left(1+\left|X_{t_{\ell-1}}^{(n)}\right|^{2(r+1)}\right).
$$
\item Next, we show \eqref{eq:5.1} for $k=2$.
We first obtain
\begin{align}
\label{eq:5.3}
&{\mathbb E}\left[\prod_{j=1}^{2}(X_{t_{\ell}}^{t_{\ell-1},X_{t_{\ell-1}}^{(n)}}-X_{t_{\ell-1}}^{(n)})^{i_{j}}-\prod_{j=1}^{2}(\eta_{\ell}^{(n)}(t_{\ell})-X_{t_{\ell-1}}^{(n)})^{i_{j}}\,\Bigggr|\,{\mathcal F}_{t_{\ell-1}}\right] \\ \notag
&={\mathbb E}\left[(\eta_{\ell}^{(n)}(t_{\ell})-X_{t_{\ell-1}}^{(n)})^{i_{1}}(X_{t_{\ell}}^{t_{\ell-1},X_{t_{\ell-1}}^{(n)}}-\eta_{\ell}^{(n)}(t_{\ell}))^{i_{2}}\,\Bigr|\,{\mathcal F}_{t_{\ell-1}}\right] \\ \notag
&\hspace{0.38cm}+{\mathbb E}\left[(X_{t_{\ell}}^{t_{\ell-1},X_{t_{\ell-1}}^{(n)}}-\eta_{\ell}^{(n)}(t_{\ell}))^{i_{1}}(\eta_{\ell}^{(n)}(t_{\ell})-X_{t_{\ell-1}}^{(n)})^{i_{2}}\,\Bigr|\,{\mathcal F}_{t_{\ell-1}}\right] \\ \notag
&\hspace{0.38cm}+{\mathbb E}\Biggl[(X_{t_{\ell}}^{t_{\ell-1},X_{t_{\ell-1}}^{(n)}}-\eta_{\ell}^{(n)}(t_{\ell}))^{i_{1}}(X_{t_{\ell}}^{t_{\ell-1},X_{t_{\ell-1}}^{(n)}}-\eta_{\ell}^{(n)}(t_{\ell}))^{i_{2}}\,\Bigr|\,{\mathcal F}_{t_{\ell-1}}\Biggr].
\end{align}
Then we estimate each term on the right hand side.
From Theorem \ref{theo:5.2}, the martingale property, some properties of the stochastic integral (cf. Proposition 2-1.1, 2-1.2 in \cite{INWS}) and {\bf A2-1}, we have
\begin{align*}
&{\mathbb E}\left[(\eta_{\ell}^{(n)}(t_{\ell})-X_{t_{\ell-1}}^{(n)})^{i_{1}}(X_{t_{\ell}}^{t_{\ell-1},X_{t_{\ell-1}}^{(n)}}-\eta_{\ell}^{(n)}(t_{\ell}))^{i_{2}}\,\Bigr|\,{\mathcal F}_{t_{\ell-1}}\right] \\
&=\sum_{j,j_{1},j_{2} \in \{0,1,\ldots,d\}}{\mathbb E}\left[I_{t_{\ell-1},t_{\ell}}^{(j)}\sigma_{j}^{i_{1}}(X_{t_{\ell-1}}^{(n)}) \cdot I_{t_{\ell-1},t_{\ell}}^{(j_{1},j_{2})}{\mathcal L}^{j_{1}}\sigma_{j_{2}}^{i_{2}}(X_{\bullet}^{t_{\ell-1},X_{t_{\ell-1}}^{(n)}})\,\Bigr|\,{\mathcal F}_{t_{\ell-1}}\right] \\
&=\sum_{j=0}^{d}{\mathbb E}\left[I_{t_{\ell-1},t_{\ell}}^{(j)}\sigma_{j}^{i_{1}}(X_{t_{\ell-1}}^{(n)}) \cdot I_{t_{\ell-1},t_{\ell}}^{(0,0)}{\mathcal L}^{0}b^{i_{2}}(X_{\bullet}^{t_{\ell-1},X_{t_{\ell-1}}^{(n)}})\,\Bigr|\,{\mathcal F}_{t_{\ell-1}}\right] \\
&\hspace{0.38cm}+\sum_{j=1}^{d}{\mathbb E}\left[I_{t_{\ell-1},t_{\ell}}^{(j)}\sigma_{j}^{i_{1}}(X_{t_{\ell-1}}^{(n)}) \cdot I_{t_{\ell-1},t_{\ell}}^{(j,0)}{\mathcal L}^{j}b^{i_{2}}(X_{\bullet}^{t_{\ell-1},X_{t_{\ell-1}}^{(n)}})\,\Bigr|\,{\mathcal F}_{t_{\ell-1}}\right] \\
&\hspace{0.38cm}+\sum_{j=1}^{d}{\mathbb E}\left[I_{t_{\ell-1},t_{\ell}}^{(j)}\sigma_{j}^{i_{1}}(X_{t_{\ell-1}}^{(n)}) \cdot I_{t_{\ell-1},t_{\ell}}^{(0,j)}{\mathcal L}^{0}\sigma_{j}^{i_{2}}(X_{\bullet}^{t_{\ell-1},X_{t_{\ell-1}}^{(n)}})\,\Bigr|\,{\mathcal F}_{t_{\ell-1}}\right] \\
&=\sum_{j=0}^{d}\int_{t_{\ell-1}}^{t_{\ell}}\int_{t_{\ell-1}}^{s_{2}}{\mathbb E}\left[I_{t_{\ell-1},t_{\ell}}^{(j)}\sigma_{j}^{i_{1}}(X_{t_{\ell-1}}^{(n)}) \cdot {\mathcal L}^{0}b^{i_{2}}(X_{s_{1}}^{t_{\ell-1},X_{t_{\ell-1}}^{(n)}})\,\Bigr|\,{\mathcal F}_{t_{\ell-1}}\right]{\rm d}s_{1}{\rm d}s_{2} \\
&\hspace{0.38cm}+\sum_{j=1}^{d}\int_{t_{\ell-1}}^{t_{\ell}}\int_{t_{\ell-1}}^{s_{2}}\sigma_{j}^{i_{1}}(X_{t_{\ell-1}}^{(n)}){\mathbb E}\left[{\mathcal L}^{j}b^{i_{2}}(X_{s_{1}}^{t_{\ell-1},X_{t_{\ell-1}}^{(n)}})\,\Bigr|\,{\mathcal F}_{t_{\ell-1}}\right]{\rm d}s_{1}{\rm d}s_{2} \\
&\hspace{0.38cm}+\sum_{j=1}^{d}\int_{t_{\ell-1}}^{t_{\ell}}\int_{t_{\ell-1}}^{s_{2}}\sigma_{j}^{i_{1}}(X_{t_{\ell-1}}^{(n)}){\mathbb E}\left[{\mathcal L}^{0}\sigma_{j}^{i_{2}}(X_{s_{1}}^{t_{\ell-1},X_{t_{\ell-1}}^{(n)}})\,\Bigr|\,{\mathcal F}_{t_{\ell-1}}\right]{\rm d}s_{1}{\rm d}s_{2}.
\end{align*}
Thus by using the Cauchy-Schwarz inequality, {\bf A2-1}, {\bf A1-2}, \eqref{eq:5.2} and Lemma \ref{lem:5.4}, we obtain
\begin{align}
\label{eq:5.4}
\left|{\mathbb E}\left[(\eta_{\ell}^{(n)}(t_{\ell})-X_{t_{\ell}}^{(n)})^{i_{1}}(X_{t_{\ell}}^{t_{\ell-1},X_{t_{\ell-1}}^{(n)}}-\eta_{\ell}^{(n)}(t_{\ell}))^{i_{2}}\,\Bigr|\,{\mathcal F}_{t_{\ell-1}}\right]\right|
\leq \frac{C}{n^{2}}\left(1+\left|X_{t_{\ell-1}}^{(n)}\right|^{2r+3}\right).
\end{align}
On the other hand, from Theorem \ref{theo:5.2}, the Cauchy-Schwarz inequality, Lemma \ref{lem:5.3}, \eqref{eq:5.2} and Lemma \ref{lem:5.4}, we have
\begin{align}
\label{eq:5.5}
&\left|{\mathbb E}\Biggl[(X_{t_{\ell}}^{t_{\ell-1},X_{t_{\ell-1}}^{(n)}}-\eta_{\ell}^{(n)}(t_{\ell}))^{i_{1}}(X_{t_{\ell}}^{t_{\ell-1},X_{t_{\ell-1}}^{(n)}}-\eta_{\ell}^{(n)}(t_{\ell}))^{i_{2}}\,\Bigr|\,{\mathcal F}_{t_{\ell-1}}\Biggr]\right| \\ \notag
&\leq \sum_{j_{1},j_{2},j'_{1},j'_{2} \in \{0,1,\ldots,d\}}{\mathbb E}\left[\Bigl|I_{t_{\ell-1},t_{\ell}}^{(j_{1},j_{2})}{\mathcal L}^{j_{1}}\sigma_{j_{2}}^{i_{1}}(X_{\bullet}^{t_{\ell-1},X_{t_{\ell-1}}^{(n)}})\Bigr|^{2}\,\Bigr|\,{\mathcal F}_{t_{\ell-1}}\right]^{1/2}{\mathbb E}\left[\Bigl|I_{t_{\ell-1},t_{\ell}}^{(j'_{1},j'_{2})}{\mathcal L}^{j'_{1}}\sigma_{j'_{2}}^{i_{1}}(X_{\bullet}^{t_{\ell-1},X_{t_{\ell-1}}^{(n)}})\Bigr|^{2}\,\Bigr|\,{\mathcal F}_{t_{\ell-1}}\right]^{1/2} \\ \notag
&\leq \frac{C}{n^{2}}\left(1+\left|X_{t_{\ell-1}}^{(n)}\right|^{4(r+1)}\right).
\end{align}
Hence it follows by \eqref{eq:5.3}-\eqref{eq:5.5} that
\begin{align*}
{\mathbb E}\left[\prod_{j=1}^{2}(X_{t_{\ell}}^{t_{\ell-1},X_{t_{\ell-1}}^{(n)}}-X_{t_{\ell-1}}^{(n)})^{i_{j}}-\prod_{j=1}^{2}(\eta_{\ell}^{(n)}(t_{\ell})-X_{t_{\ell-1}}^{(n)})^{i_{j}}\,\Bigggr|\,{\mathcal F}_{t_{\ell-1}}\right]
\leq \frac{C}{n^{2}}\left(1+\left|X_{t_{\ell-1}}^{(n)}\right|^{4(r+1)}\right).
\end{align*}
\item Finally, we show \eqref{eq:5.1} for $k=3$.
We first obtain
\begin{align}
\label{eq:5.6}
&{\mathbb E}\left[\prod_{j=1}^{3}(X_{t_{\ell}}^{t_{\ell-1},X_{t_{\ell-1}}^{(n)}}-X_{t_{\ell-1}}^{(n)})^{i_{j}}-\prod_{j=1}^{3}(\eta_{\ell}^{(n)}(t_{\ell})-X_{t_{\ell-1}}^{(n)})^{i_{j}}\,\Bigggr|\,{\mathcal F}_{t_{\ell-1}}\right] \\ \notag
&={\mathbb E}\left[(\eta_{\ell}^{(n)}(t_{\ell})-X_{t_{\ell-1}}^{(n)})^{i_{1}}(\eta_{\ell}^{(n)}(t_{\ell})-X_{t_{\ell-1}}^{(n)})^{i_{2}}(X_{t_{\ell}}^{t_{\ell-1},X_{t_{\ell-1}}^{(n)}}-\eta_{\ell}^{(n)}(t_{\ell}))^{i_{3}}\,\Bigr|\,{\mathcal F}_{t_{\ell-1}}\right] \\ \notag
&\hspace{0.38cm}+{\mathbb E}\left[(\eta_{\ell}^{(n)}(t_{\ell})-X_{t_{\ell-1}}^{(n)})^{i_{1}}(X_{t_{\ell}}^{t_{\ell-1},X_{t_{\ell-1}}^{(n)}}-\eta_{\ell}^{(n)}(t_{\ell}))^{i_{2}}(\eta_{\ell}^{(n)}(t_{\ell})-X_{t_{\ell-1}}^{(n)})^{i_{3}}\,\Bigr|\,{\mathcal F}_{t_{\ell-1}}\right] \\ \notag
&\hspace{0.38cm}+{\mathbb E}\left[(X_{t_{\ell}}^{t_{\ell-1},X_{t_{\ell-1}}^{(n)}}-\eta_{\ell}^{(n)}(t_{\ell}))^{i_{1}}(\eta_{\ell}^{(n)}(t_{\ell})-X_{t_{\ell-1}}^{(n)})^{i_{2}}(\eta_{\ell}^{(n)}(t_{\ell})-X_{t_{\ell-1}}^{(n)})^{i_{3}}\,\Bigr|\,{\mathcal F}_{t_{\ell-1}}\right] \\ \notag
&\hspace{0.38cm}+{\mathbb E}\left[(\eta_{\ell}^{(n)}(t_{\ell})-X_{t_{\ell-1}}^{(n)})^{i_{1}}(X_{t_{\ell}}^{t_{\ell-1},X_{t_{\ell-1}}^{(n)}}-\eta_{\ell}^{(n)}(t_{\ell}))^{i_{2}}(X_{t_{\ell}}^{t_{\ell-1},X_{t_{\ell-1}}^{(n)}}-\eta_{\ell}^{(n)}(t_{\ell}))^{i_{3}}\,\Bigr|\,{\mathcal F}_{t_{\ell-1}}\right] \\ \notag
&\hspace{0.38cm}+{\mathbb E}\left[(X_{t_{\ell}}^{t_{\ell-1},X_{t_{\ell-1}}^{(n)}}-\eta_{\ell}^{(n)}(t_{\ell}))^{i_{1}}(\eta_{\ell}^{(n)}(t_{\ell})-X_{t_{\ell-1}}^{(n)})^{i_{2}}(X_{t_{\ell}}^{t_{\ell-1},X_{t_{\ell-1}}^{(n)}}-\eta_{\ell}^{(n)}(t_{\ell}))^{i_{3}}\,\Bigr|\,{\mathcal F}_{t_{\ell-1}}\right] \\ \notag
&\hspace{0.38cm}+{\mathbb E}\left[(X_{t_{\ell}}^{t_{\ell-1},X_{t_{\ell-1}}^{(n)}}-\eta_{\ell}^{(n)}(t_{\ell}))^{i_{1}}(X_{t_{\ell}}^{t_{\ell-1},X_{t_{\ell-1}}^{(n)}}-\eta_{\ell}^{(n)}(t_{\ell}))^{i_{2}}(\eta_{\ell}^{(n)}(t_{\ell})-X_{t_{\ell-1}}^{(n)})^{i_{3}}\,\Bigr|\,{\mathcal F}_{t_{\ell-1}}\right] \\ \notag
&\hspace{0.38cm}+{\mathbb E}\left[(X_{t_{\ell}}^{t_{\ell-1},X_{t_{\ell-1}}^{(n)}}-\eta_{\ell}^{(n)}(t_{\ell}))^{i_{1}}(X_{t_{\ell}}^{t_{\ell-1},X_{t_{\ell-1}}^{(n)}}-\eta_{\ell}^{(n)}(t_{\ell}))^{i_{2}}(X_{t_{\ell}}^{t_{\ell-1},X_{t_{\ell-1}}^{(n)}}-\eta_{\ell}^{(n)}(t_{\ell}))^{i_{3}}\,\Bigr|\,{\mathcal F}_{t_{\ell-1}}\right].
\end{align}
Then we estimate each term on the right hand side.
From Theorem \ref{theo:5.2}, the Cauchy-Schwarz inequality, Lemma \ref{lem:5.3}, {\bf A2-1}, {\bf A1-2}, \eqref{eq:5.2} and Lemma \ref{lem:5.4}, we have
\begin{align}
\label{eq:5.7}
&\left|{\mathbb E}\left[(\eta_{\ell}^{(n)}(t_{\ell})-X_{t_{\ell-1}}^{(n)})^{i_{1}}(\eta_{\ell}^{(n)}(t_{\ell})-X_{t_{\ell-1}}^{(n)})^{i_{2}}(X_{t_{\ell}}^{t_{\ell-1},X_{t_{\ell-1}}^{(n)}}-\eta_{\ell}^{(n)}(t_{\ell}))^{i_{3}}\,\Bigr|\,{\mathcal F}_{t_{\ell-1}}\right]\right| \\ \notag
&\leq \sum_{j,j',j_{1},j_{2} \in \{0,1,\ldots,d\}}{\mathbb E}\left[\Bigl|I_{t_{\ell-1},t_{\ell}}^{(j)}\sigma_{j}^{i_{1}}(X_{t_{\ell-1}}^{(n)})\Bigr|^{4}\,\Bigr|\,{\mathcal F}_{t_{\ell-1}}\right]^{1/4}{\mathbb E}\left[\Bigl|I_{t_{\ell-1},t_{\ell}}^{(j')}\sigma_{j'}^{i_{1}}(X_{t_{\ell-1}}^{(n)})\Bigr|^{4}\,\Bigr|\,{\mathcal F}_{t_{\ell-1}}\right]^{1/4} \\ \notag
&\hspace{2.6cm}\times {\mathbb E}\left[\Bigl|I_{t_{\ell-1},t_{\ell}}^{(j_{1},j_{2})}{\mathcal L}^{j_{1}}\sigma_{j_{2}}^{i_{3}}(X_{\bullet}^{t_{\ell-1},X_{t_{\ell-1}}^{(n)}})\Bigr|^{2}\,\Bigr|\,{\mathcal F}_{t_{\ell-1}}\right]^{1/2} \\ \notag
&\leq \frac{C}{n^{2}}\left(1+\left|X_{t_{\ell-1}}^{(n)}\right|^{2(r+2)}\right).
\end{align}
Similarly, we obtain
\begin{align}
\label{eq:5.8}
{\mathbb E}\left[(\eta_{\ell}^{(n)}(t_{\ell})-X_{t_{\ell-1}}^{(n)})^{i_{1}}(X_{t_{\ell}}^{t_{\ell-1},X_{t_{\ell-1}}^{(n)}}-\eta_{\ell}^{(n)}(t_{\ell}))^{i_{2}}(X_{t_{\ell}}^{t_{\ell-1},X_{t_{\ell-1}}^{(n)}}-\eta_{\ell}^{(n)}(t_{\ell}))^{i_{3}}\,\Bigr|\,{\mathcal F}_{t_{\ell-1}}\right]
&\leq \frac{C}{n^{5/2}}\left(1+\left|X_{t_{\ell-1}}^{(n)}\right|^{4r+5}\right), \\ \notag
{\mathbb E}\left[(X_{t_{\ell}}^{t_{\ell-1},X_{t_{\ell-1}}^{(n)}}-\eta_{\ell}^{(n)}(t_{\ell}))^{i_{1}}(X_{t_{\ell}}^{t_{\ell-1},X_{t_{\ell-1}}^{(n)}}-\eta_{\ell}^{(n)}(t_{\ell}))^{i_{2}}(X_{t_{\ell}}^{t_{\ell-1},X_{t_{\ell-1}}^{(n)}}-\eta_{\ell}^{(n)}(t_{\ell}))^{i_{3}}\,\Bigr|\,{\mathcal F}_{t_{\ell-1}}\right]
&\leq \frac{C}{n^{3}}\left(1+\left|X_{t_{\ell-1}}^{(n)}\right|^{6(r+1)}\right).
\end{align}
Thus it follows by combining \eqref{eq:5.6}-\eqref{eq:5.8} that
\begin{align*}
{\mathbb E}\left[\prod_{j=1}^{3}(X_{t_{\ell}}^{t_{\ell-1},X_{t_{\ell-1}}^{(n)}}-X_{t_{\ell-1}}^{(n)})^{i_{j}}-\prod_{j=1}^{3}(\eta_{\ell}^{(n)}(t_{\ell})-X_{t_{\ell-1}}^{(n)})^{i_{j}}\,\Bigggr|\,{\mathcal F}_{t_{\ell-1}}\right] \leq \frac{C}{n^{2}}\left(1+\left|X_{t_{\ell-1}}^{(n)}\right|^{6(r+1)}\right).
\end{align*}
\end{itemize}
Therefore, we conclude the statement.
\end{proof}

\begin{Lemma}
\label{lem:5.6}
Suppose that Assumption \ref{ass:3.1} holds.
Then for any $\ell \in \{1,2,\ldots,n\}$, it holds that
$$
{\mathbb E}\left[u(t_{\ell},X_{t_{\ell}}^{t_{\ell-1},X_{t_{\ell-1}}^{(n)}})-u(t_{\ell-1},X_{t_{\ell-1}}^{(n)})\,\biggr|\,{\mathcal F}_{t_{\ell-1}}\right]=0.
$$
\end{Lemma}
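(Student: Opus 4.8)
The plan is to recognize the claimed identity as the (conditional) martingale property of the process $t\mapsto u(t,X_{t}^{t_{\ell-1},X_{t_{\ell-1}}^{(n)}})$ on the interval $[t_{\ell-1},t_{\ell}]$, which is the classical consequence of the fact that $u$ solves the Kolmogorov backward equation ${\mathcal L}^{0}u=0$ recorded in Theorem \ref{theo:5.1}. Since that theorem also gives $u(s,\cdot)\in C_{P}^{4}({\mathbb R}^{d})$, and in particular $u\in{\rm Dom}({\mathcal L}^{0})=C^{1,2}([0,T]\times{\mathbb R}^{d})$, I may legitimately apply It\^o's formula. Writing $\Xi_{s}:=X_{s}^{t_{\ell-1},X_{t_{\ell-1}}^{(n)}}$ for brevity (this is the genuine diffusion started at the ${\mathcal F}_{t_{\ell-1}}$-measurable point $X_{t_{\ell-1}}^{(n)}$), the first step is to expand
\begin{align*}
u(t_{\ell},\Xi_{t_{\ell}})-u(t_{\ell-1},X_{t_{\ell-1}}^{(n)})
=\int_{t_{\ell-1}}^{t_{\ell}}{\mathcal L}^{0}u(s,\Xi_{s})\,{\rm d}s
+\sum_{j=1}^{d}\int_{t_{\ell-1}}^{t_{\ell}}{\mathcal L}^{j}u(s,\Xi_{s})\,{\rm d}W_{s}^{j},
\end{align*}
where the grouping of the $\partial_{s}$, drift, and second-order terms is exactly the operator ${\mathcal L}^{0}$ and the diffusion part is carried by ${\mathcal L}^{j}u=\sum_{i'}\sigma_{j}^{i'}\partial_{y^{i'}}u$, $j\in\{1,\ldots,d\}$.

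The second step is immediate: by the second bullet of Theorem \ref{theo:5.1}, ${\mathcal L}^{0}u(s,y)=0$ for all $(s,y)$, so the Lebesgue integral vanishes identically and only the stochastic integrals survive. It then remains to check that each $\int_{t_{\ell-1}}^{t_{\ell}}{\mathcal L}^{j}u(s,\Xi_{s})\,{\rm d}W_{s}^{j}$ is a genuine martingale increment, so that its conditional expectation given ${\mathcal F}_{t_{\ell-1}}$ is $0$. For this I would verify $\int_{t_{\ell-1}}^{t_{\ell}}{\mathbb E}[\,|{\mathcal L}^{j}u(s,\Xi_{s})|^{2}\mid{\mathcal F}_{t_{\ell-1}}]\,{\rm d}s<\infty$: the linear growth {\bf A1-2} of $\sigma$ together with the polynomial growth of the first derivatives of $u$ (from $u(s,\cdot)\in C_{P}^{4}$) yields a bound $|{\mathcal L}^{j}u(s,y)|^{2}\leq C(1+|y|^{2q})$ for some $q$, and the first conditional moment estimate of Lemma \ref{lem:5.4} controls ${\mathbb E}[\,|\Xi_{s}|^{2q}\mid{\mathcal F}_{t_{\ell-1}}]\leq C(1+|X_{t_{\ell-1}}^{(n)}|^{2q})$, which is finite a.s. Taking ${\mathbb E}[\,\cdot\mid{\mathcal F}_{t_{\ell-1}}]$ of the displayed expansion then gives the claim.

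The only genuinely nontrivial point, and thus the main obstacle, is this integrability check ensuring the stochastic integrals are true (not merely local) martingales; everything else is a direct appeal to Theorem \ref{theo:5.1} and the moment bound of Lemma \ref{lem:5.4}, so this lemma is essentially a corollary of the backward equation. As an alternative route that avoids It\^o's formula altogether, one can argue purely probabilistically: the flow property $X_{T}^{t_{\ell-1},X_{t_{\ell-1}}^{(n)}}=X_{T}^{t_{\ell},\Xi_{t_{\ell}}}$ combined with the Markov property gives $u(t_{\ell},\Xi_{t_{\ell}})={\mathbb E}[f(X_{T}^{t_{\ell-1},X_{t_{\ell-1}}^{(n)}})\mid{\mathcal F}_{t_{\ell}}]$, whence the tower property over ${\mathcal F}_{t_{\ell-1}}\subset{\mathcal F}_{t_{\ell}}$ and the fact that $X_{t_{\ell-1}}^{(n)}$ is ${\mathcal F}_{t_{\ell-1}}$-measurable while the driving increments after $t_{\ell-1}$ are independent of ${\mathcal F}_{t_{\ell-1}}$ yield ${\mathbb E}[u(t_{\ell},\Xi_{t_{\ell}})\mid{\mathcal F}_{t_{\ell-1}}]={\mathbb E}[f(X_{T}^{t_{\ell-1},X_{t_{\ell-1}}^{(n)}})\mid{\mathcal F}_{t_{\ell-1}}]=u(t_{\ell-1},X_{t_{\ell-1}}^{(n)})$. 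I would present the It\^o-formula version as the primary proof, since it reuses the generator ${\mathcal L}^{0}$ and the estimates already developed in this section.
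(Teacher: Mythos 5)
Your proof is correct and takes essentially the same route as the paper: apply It\^o's formula to $u(s,X_{s}^{t_{\ell-1},X_{t_{\ell-1}}^{(n)}})$, use the Kolmogorov backward equation ${\mathcal L}^{0}u=0$ from Theorem \ref{theo:5.1} to eliminate the drift term, and conclude by the martingale property of the remaining stochastic integrals. Your explicit conditional integrability check (via the polynomial growth of the derivatives of $u$, {\bf A1-2}, and Lemma \ref{lem:5.4}), which guarantees the stochastic integrals are genuine martingale increments rather than merely local ones, is a point the paper leaves implicit but does not constitute a different approach.
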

\begin{proof}
By using It\^o's formula and Theorem \ref{theo:5.1} we obtain
$$
u(t_{\ell},X_{t_{\ell}}^{t_{\ell-1},X_{t_{\ell-1}}^{(n)}})-u(t_{\ell-1},X_{t_{\ell-1}}^{(n)})=\sum_{j=0}^{d}\int_{t_{\ell-1}}^{t_{\ell}}\left.{\mathcal L}^{j}u(s,\bullet)\right|_{\bullet=X_{s}^{t_{\ell-1},X_{t_{\ell-1}}^{(n)}}}{\rm d}W_{s}^{j}.
$$
Thus the statement follows by the martingale property and Theorem \ref{theo:5.1}.
\end{proof}

\begin{Lemma}
\label{lem:5.7}
Suppose that Assumptions \ref{ass:3.1} and \ref{ass:3.2} hold.
Then for any $p \in {\mathbb N}$, there exists a positive constant $C$ such that for any $\ell \in \{1,2,\ldots,n\}$,
$$
{\mathbb E}\left[\left|X_{t_{\ell}}^{(n)}-X_{t_{\ell-1}}^{(n)}\right|^{2p}\,\Bigr|\,{\mathcal F}_{t_{\ell-1}}\right]
\leq C\left({\mathbb E}\left[\left|\Delta Z_{\ell}^{(n)}\right|^{2p}\right]+\frac{1}{n^{2p}}\right)\left(1+\left|X_{t_{\ell-1}}^{(n)}\right|^{2p}\right)
$$
and
$$
{\mathbb E}\left[\left|X_{t_{\ell}}^{(n)}\right|^{2p}\,\Bigr|\,{\mathcal F}_{0}\right]
\leq C\exp\left\{C\left(1+n\left(M_{2}^{(n)}(Z) \vee M_{2p}^{(n)}(Z)\right)\right)\right\}.
$$
\end{Lemma}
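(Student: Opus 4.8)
The plan is to prove the two inequalities in turn, the first being a direct one-step estimate and the second a discrete Gronwall argument built on it.

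\emph{First inequality.} Writing the increment as $X_{t_\ell}^{(n)}-X_{t_{\ell-1}}^{(n)}=\sigma(X_{t_{\ell-1}}^{(n)})\Delta Z_\ell^{(n)}+b(X_{t_{\ell-1}}^{(n)})\tfrac{T}{n}$, I would apply the elementary bound $|a+b|^{2p}\le 2^{2p-1}(|a|^{2p}+|b|^{2p})$ together with $|\sigma(y)v|\le|\sigma(y)|\,|v|$ (the Frobenius norm dominates the operator norm). Taking $\mathbb{E}[\,\cdot\mid\mathcal{F}_{t_{\ell-1}}]$ and using that $X_{t_{\ell-1}}^{(n)}$ is $\mathcal{F}_{t_{\ell-1}}$-measurable while $\Delta Z_\ell^{(n)}$ is independent of $\mathcal{F}_{t_{\ell-1}}$ ({\bf A2-2}), the factor $\sigma(X_{t_{\ell-1}}^{(n)})$ pulls out and $\mathbb{E}[|\sigma(X_{t_{\ell-1}}^{(n)})\Delta Z_\ell^{(n)}|^{2p}\mid\mathcal{F}_{t_{\ell-1}}]\le|\sigma(X_{t_{\ell-1}}^{(n)})|^{2p}\,\mathbb{E}[|\Delta Z_\ell^{(n)}|^{2p}]$. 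The linear growth {\bf A1-2} then turns $|\sigma|^{2p}$ and $|b|^{2p}$ into $C(1+|X_{t_{\ell-1}}^{(n)}|^{2p})$, and absorbing $T^{2p}$ into $C$ gives the first inequality.

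\emph{Second inequality.} Set $Y=X_{t_{\ell-1}}^{(n)}$ and $\Delta X=X_{t_\ell}^{(n)}-Y$. I would expand $|X_{t_\ell}^{(n)}|^{2p}=(|Y|^2+2\langle Y,\Delta X\rangle+|\Delta X|^2)^p$ by the binomial theorem. The leading term is $|Y|^{2p}$. The unique summand carrying $\Delta X$ to the first power is $2p\,|Y|^{2(p-1)}\langle Y,\Delta X\rangle$; here I would crucially use the vanishing first moment in {\bf A2-3}: since $\mathbb{E}[\Delta Z_\ell^{(n)}\mid\mathcal{F}_{t_{\ell-1}}]=0$, we get $\mathbb{E}[\langle Y,\Delta X\rangle\mid\mathcal{F}_{t_{\ell-1}}]=\langle Y,b(Y)\rangle\tfrac{T}{n}$, which by {\bf A1-2} is $O(n^{-1})(1+|Y|^{2p})$. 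Every remaining summand carries $|\Delta X|^{q}$ with an integer exponent $q\in\{2,\ldots,2p\}$, and for these I would rerun the computation of the first inequality with $q$ in place of $2p$ (the argument is identical and valid for any integer $q\ge1$), obtaining $\mathbb{E}[|\Delta X|^q\mid\mathcal{F}_{t_{\ell-1}}]\le C(\mathbb{E}[|\Delta Z_\ell^{(n)}|^q]+n^{-q})(1+|Y|^q)$.

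To collapse the many moments $\mathbb{E}[|\Delta Z_\ell^{(n)}|^q]$ into $M_2^{(n)}(Z)\vee M_{2p}^{(n)}(Z)$, I would invoke log-convexity of moments (Lyapunov's inequality): $q\mapsto\log\mathbb{E}[|\Delta Z_\ell^{(n)}|^q]$ is convex, hence on $[2,2p]$ its maximum is attained at an endpoint, so $\mathbb{E}[|\Delta Z_\ell^{(n)}|^q]\le M_2^{(n)}(Z)\vee M_{2p}^{(n)}(Z)$ for every integer $q\in[2,2p]$. Since $n^{-q}\le n^{-1}$ for $q\ge2$ and the powers of $|Y|$ never exceed $2p$, each summand is bounded by $C(n^{-1}+M_2^{(n)}(Z)\vee M_{2p}^{(n)}(Z))(1+|Y|^{2p})$. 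Summing the finitely many ($p$ fixed) summands yields the one-step recursion
\[
\mathbb{E}\!\left[1+|X_{t_\ell}^{(n)}|^{2p}\mid\mathcal{F}_{t_{\ell-1}}\right]\le\left(1+C\big(n^{-1}+M_2^{(n)}(Z)\vee M_{2p}^{(n)}(Z)\big)\right)\left(1+|X_{t_{\ell-1}}^{(n)}|^{2p}\right).
\]
Taking $\mathbb{E}[\,\cdot\mid\mathcal{F}_0]$, iterating at most $n$ times, and using $1+x\le e^{x}$ gives $\mathbb{E}[1+|X_{t_\ell}^{(n)}|^{2p}\mid\mathcal{F}_0]\le e^{C(1+n(M_2^{(n)}(Z)\vee M_{2p}^{(n)}(Z)))}(1+|x_0|^{2p})$, and absorbing the constant $1+|x_0|^{2p}$ finishes the proof.

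\emph{Main obstacle.} The delicate point is the summand linear in $\Delta X$: bounding it crudely by $|Y|^{2p-1}\,\mathbb{E}[|\Delta X|\mid\mathcal{F}_{t_{\ell-1}}]$ would introduce $\mathbb{E}[|\Delta Z_\ell^{(n)}|]\le M_2^{(n)}(Z)^{1/2}$, and over $n$ steps $n\,M_2^{(n)}(Z)^{1/2}$ diverges (recall $M_2^{(n)}(Z)=dT/n$). The resolution is that the zero-first-moment condition {\bf A2-3} removes this summand up to its $O(n^{-1})$ drift part; moreover one must resist interpolating the odd exponents $q$, since fractional powers of $M_2^{(n)}(Z)\vee M_{2p}^{(n)}(Z)$ would reintroduce the same divergence — working with the exact integer exponents $q\ge2$ and log-convexity avoids this.
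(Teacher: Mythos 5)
Your proof is correct, and for the first inequality it is the same argument as the paper's: Cauchy-Schwarz in the form $|\sigma(Y)\Delta Z_\ell^{(n)}|\le|\sigma(Y)|\,|\Delta Z_\ell^{(n)}|$, the independence assumption {\bf A2-2} to pull the moment of $\Delta Z_\ell^{(n)}$ out of the conditional expectation, and the linear growth {\bf A1-2}. For the second inequality you follow the same overall strategy as the paper --- expand $|X_{t_\ell}^{(n)}|^{2p}$ around $X_{t_{\ell-1}}^{(n)}$, use the zero-mean condition in {\bf A2-3} so that the term linear in the increment contributes only its $O(n^{-1})$ drift part, and iterate the resulting one-step recursion --- but the implementation differs at three points. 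The paper applies Taylor's theorem to $x\mapsto|x|^{2p}$ with a second-order Lagrange-type remainder (a diagonal matrix $\theta_k$ with entries in $(0,1)$), so the remainder has the form $\mathbb{E}\bigl[|\Delta X|^{2}\,|Y+\theta\Delta X|^{2(p-1)}\mid\mathcal{F}_{t_{k-1}}\bigr]$; after splitting $|Y+\theta\Delta X|^{2(p-1)}\le C(|Y|^{2(p-1)}+|\Delta X|^{2(p-1)})$ this is controlled by the first inequality with exponents $2$ and $2p$ only, so the sole moments that ever appear are $M_{2}^{(n)}(Z)$ and $M_{2p}^{(n)}(Z)$ and no interpolation is needed. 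Your multinomial expansion is an exact identity (no intermediate point), but it produces every integer power $q\in\{2,\ldots,2p\}$ of $|\Delta X|$, which is why you need Lyapunov's log-convexity to collapse $\mathbb{E}[|\Delta Z_\ell^{(n)}|^{q}]$ into $M_{2}^{(n)}(Z)\vee M_{2p}^{(n)}(Z)$ --- a correct extra step that the paper avoids structurally, and your caution against interpolating into fractional powers of the moments is exactly the right one, since $n\,M_{2}^{(n)}(Z)^{1/2}$ diverges. Finally, the paper concludes by a telescoping sum plus the discrete Gronwall lemma (citing Holte), whereas your multiplicative iteration $\bigl(1+C(n^{-1}+M_{2}^{(n)}(Z)\vee M_{2p}^{(n)}(Z))\bigr)^{n}\le e^{C(1+n(M_{2}^{(n)}(Z)\vee M_{2p}^{(n)}(Z)))}$ via the tower property is more elementary and dispenses with Gronwall altogether. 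Both routes yield the stated bound with the same exponential structure.
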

\begin{proof}
By using the Cauchy-Schwarz inequality, we obtain
$$
\left|\sum_{j=1}^{d}\sigma_{j}(X_{t_{\ell-1}}^{(n)})(\Delta Z_{\ell}^{(n)})^{j}\right|^{2}
=\sum_{i=1}^{d}\left|\sum_{j=1}^{d}\sigma_{j}^{i}(X_{t_{\ell-1}}^{(n)})(\Delta Z_{\ell}^{(n)})^{j}\right|^{2}
\leq \left|\sigma(X_{t_{\ell-1}}^{(n)})\right|^{2}\left|\Delta Z_{\ell}^{(n)}\right|^{2}.
$$
Thus by {\bf A1-2}, we have
\begin{align}
\label{eq:5.9}
\left|X_{t_{\ell}}^{(n)}-X_{t_{\ell-1}}^{(n)}\right|^{2p}
&\leq C\left(\left|\sigma(X_{t_{\ell-1}}^{(n)})\right|^{2p}\left|\Delta Z_{\ell}^{(n)}\right|^{2p}+\left|b(X_{t_{\ell-1}}^{(n)})\right|^{2p}\frac{1}{n^{2p}}\right) \\ \notag
&\leq C\left(\left|\Delta Z_{\ell}^{(n)}\right|^{2p}+\frac{1}{n^{2p}}\right)\left(1+\left|X_{t_{\ell-1}}^{(n)}\right|^{2p}\right).
\end{align}
Hence by {\bf A2-1} and {\bf A2-2}, we obtain
$$
{\mathbb E}\left[\left|X_{t_{\ell}}^{(n)}-X_{t_{\ell-1}}^{(n)}\right|^{2p}\,\Bigr|\,{\mathcal F}_{t_{\ell-1}}\right]
\leq C\left({\mathbb E}\left[\left|\Delta Z_{\ell}^{(n)}\right|^{2p}\right]+\frac{1}{n^{2p}}\right)\left(1+\left|X_{t_{\ell-1}}^{(n)}\right|^{2p}\right),
$$
which concludes the first statement.

Next, fix $k \in \{1,2,\ldots,\ell\}$.
By using Taylor's theorem, there exists a $d \times d$ diagonal matrix $\theta_{k}$ such that for any $i \in \{1,2,\ldots,d\}$, $(\theta_{k})_{i}^{i} \in (0,1)$ and
\begin{align*}
\left|X_{t_{k}}^{(n)}\right|^{2p}-\left|X_{t_{k-1}}^{(n)}\right|^{2p}
&=2p\left|X_{t_{k-1}}^{(n)}\right|^{2(p-1)}\sum_{i=1}^{d}(X_{t_{k}}^{(n)}-X_{t_{k-1}}^{(n)})^{i}(X_{t_{k-1}}^{(n)})^{i} \\
&\hspace{0.38cm}+2p(p-1)\left|X_{t_{k-1}}^{(n)}+\theta(X_{t_{k}}^{(n)}-X_{t_{k-1}}^{(n)})\right|^{2(p-2)}\left|\sum_{i=1}^{d}(X_{t_{k}}^{(n)}-X_{t_{k-1}}^{(n)})^{i}(X_{t_{k-1}}^{(n)}+\theta(X_{t_{k}}^{(n)}-X_{t_{k-1}}^{(n)}))^{i}\right|^{2} \\
&\hspace{0.38cm}+p\left|X_{t_{k-1}}^{(n)}+\theta(X_{t_{k}}^{(n)}-X_{t_{k-1}}^{(n)})\right|^{2(p-1)}\left|X_{t_{k}}^{(n)}-X_{t_{k-1}}^{(n)}\right|^{2}.
\end{align*}
Then by {\bf A2-1}-{\bf A2-3}, the Cauchy-Schwarz inequality, {\bf A1-2} and \eqref{eq:5.9}, we have
\begin{align*}
{\mathbb E}\left[\left|X_{t_{k}}^{(n)}\right|^{2p}\,\Bigr|\,{\mathcal F}_{t_{k-1}}\right]-\left|X_{t_{k-1}}^{(n)}\right|^{2p}
&\leq \frac{C}{n}\left|X_{t_{k-1}}^{(n)}\right|^{2p}\left|b(X_{t_{k-1}}^{(n)})\right| \\
&\hspace{0.38cm}+C{\mathbb E}\left[\left|X_{t_{k}}^{(n)}-X_{t_{k-1}}^{(n)}\right|^{2}\left|X_{t_{k-1}}^{(n)}+\theta(X_{t_{k}}^{(n)}-X_{t_{k-1}}^{(n)})\right|^{2(p-1)}\,\Bigr|\,{\mathcal F}_{t_{k-1}}\right] \\
&\leq C\left(\frac{1}{n}+M_{2}^{(n)}(Z) \vee M_{2p}^{(n)}(Z)\right)\left(1+\left|X_{t_{k-1}}^{(n)}\right|^{2p}\right).
\end{align*}
Thus we obtain
\begin{align*}
{\mathbb E}\left[\left|X_{t_{\ell}}^{(n)}\right|^{2p}\,\Bigr|\,{\mathcal F}_{0}\right]
&=|x_{0}|^{2p}+\sum_{k=1}^{\ell}{\mathbb E}\left[\left|X_{t_{k}}^{(n)}\right|^{2p}-\left|X_{t_{k-1}}^{(n)}\right|^{2p}\,\Bigr|\,{\mathcal F}_{0}\right] \\
&\leq |x_{0}|^{2p}+C\left(1+n\left(M_{2}^{(n)}(Z) \vee M_{2p}^{(n)}(Z)\right)\right)
+C\left(\frac{1}{n}+M_{2}^{(n)}(Z) \vee M_{2p}^{(n)}(Z)\right)\sum_{k=1}^{\ell}{\mathbb E}\left[\left|X_{t_{k-1}}^{(n)}\right|^{2p}\,\Bigr|\,{\mathcal F}_{0}\right].
\end{align*}
Hence it follows by using  the discrete Gronwall inequality (cf. \cite{HJM}) that
\begin{align*}
{\mathbb E}\left[\left|X_{t_{\ell}}^{(n)}\right|^{2q}\,\Bigr|\,{\mathcal F}_{0}\right]
&\leq |x_{0}|^{2p}+C\left(1+n\left(M_{2}^{(n)}(Z) \vee M_{2p}^{(n)}(Z)\right)\right)+\left(|x_{0}|^{2p}+C\left(1+n\left(M_{2}^{(n)}(Z) \vee M_{2p}^{(n)}(Z)\right)\right)\right) \\
&\hspace{0.38cm}\times C\left(1+n\left(M_{2}^{(n)}(Z) \vee M_{2p}^{(n)}(Z)\right)\right)\exp\left\{C\left(1+n\left(M_{2}^{(n)}(Z) \vee M_{2p}^{(n)}(Z)\right)\right)\right\},
\end{align*}
which concludes the second statement.
\end{proof}

\begin{Lemma}
\label{lem:5.8}
Suppose that Assumptions \ref{ass:3.1} and \ref{ass:3.2} hold.
Then for any $k \in \{1,2,3\}$ and $i_{1},i_{2},\ldots,i_{k} \in \{1,2,\ldots,d\}$ and $\ell \in \{1,2,\ldots,n\}$, it holds that
$$
{\mathbb E}\left[\prod_{j=1}^{k}(X_{t_{\ell}}^{(n)}-X_{t_{\ell-1}}^{(n)})^{i_{j}}-\prod_{j=1}^{k}(\eta_{\ell}^{(n)}(t_{\ell})-X_{t_{\ell-1}}^{(n)})^{i_{j}}\,\Bigggr|\,{\mathcal F}_{t_{\ell-1}}\right]=0.
$$
\end{Lemma}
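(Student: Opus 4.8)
The plan is to reduce the whole statement to the fact that $\Delta Z_\ell^{(n)}$ and the Brownian increment $\Delta W_\ell := W_{t_\ell}-W_{t_{\ell-1}}$ have identical moments up to order $3$, which is exactly what Assumption \ref{ass:3.2} (A2-3) guarantees. First I would write both one-step increments in the same algebraic form. By the recursion \eqref{eq:2.1},
$$
(X_{t_{\ell}}^{(n)}-X_{t_{\ell-1}}^{(n)})^{i}=\sum_{j=1}^{d}\sigma_{j}^{i}(X_{t_{\ell-1}}^{(n)})(\Delta Z_{\ell}^{(n)})^{j}+b^{i}(X_{t_{\ell-1}}^{(n)})\frac{T}{n},
$$
while, since $\sigma_{j}(X_{t_{\ell-1}}^{(n)})$ is $\mathcal{F}_{t_{\ell-1}}$-measurable, one has $I_{t_{\ell-1},t_{\ell}}^{(j)}\sigma_{j}(X_{t_{\ell-1}}^{(n)})=\sigma_{j}(X_{t_{\ell-1}}^{(n)})(\Delta W_{\ell})^{j}$ for $j\neq 0$ and $I_{t_{\ell-1},t_{\ell}}^{(0)}\sigma_{0}(X_{t_{\ell-1}}^{(n)})=b(X_{t_{\ell-1}}^{(n)})\frac{T}{n}$, so that the definition of $\eta_{\ell}^{(n)}$ in Theorem \ref{theo:5.2} gives
$$
(\eta_{\ell}^{(n)}(t_{\ell})-X_{t_{\ell-1}}^{(n)})^{i}=\sum_{j=1}^{d}\sigma_{j}^{i}(X_{t_{\ell-1}}^{(n)})(\Delta W_{\ell})^{j}+b^{i}(X_{t_{\ell-1}}^{(n)})\frac{T}{n}.
$$
Thus both increments have the identical form $\sum_{j}\sigma_{j}^{i}\,V^{j}+b^{i}\,T/n$ with $\mathcal{F}_{t_{\ell-1}}$-measurable coefficients, differing only in the noise vector $V$, which is $\Delta Z_{\ell}^{(n)}$ in one case and $\Delta W_{\ell}$ in the other.

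Next I would argue at the level of a generic noise vector. Let $V$ be any $d$-dimensional random vector that is independent of $\mathcal{F}_{t_{\ell-1}}$ and satisfies the moment conditions A2-3. Expanding the product $\prod_{j=1}^{k}\bigl(\sum_{m}\sigma_{m}^{i_{j}}V^{m}+b^{i_{j}}T/n\bigr)$ by multilinearity, every monomial is a product of $p$ noise components $V^{m_{1}}\cdots V^{m_{p}}$, with $0\le p\le k\le 3$, times an $\mathcal{F}_{t_{\ell-1}}$-measurable coefficient assembled from the $\sigma$'s, the $b$'s, and powers of $T/n$. Taking the conditional expectation and using independence of $V$ from $\mathcal{F}_{t_{\ell-1}}$, the coefficients factor out and each monomial contributes its coefficient times the unconditional moment $\mathbb{E}[V^{m_{1}}\cdots V^{m_{p}}]$ of order $p\le 3$. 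Consequently $\mathbb{E}\bigl[\prod_{j=1}^{k}(\cdots)^{i_{j}}\,\big|\,\mathcal{F}_{t_{\ell-1}}\bigr]$ depends on $V$ only through its moments of order at most $3$.

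Finally I would apply this with the two concrete choices of $V$. The increment $\Delta Z_{\ell}^{(n)}$ is independent of $\mathcal{F}_{t_{\ell-1}}$ by A2-2 and satisfies A2-3 by hypothesis; the Brownian increment $\Delta W_{\ell}$ is independent of $\mathcal{F}_{t_{\ell-1}}$ by the independent-increments property and likewise satisfies A2-3, since it is a centred Gaussian vector with covariance $\frac{T}{n}\delta_{j_{1},j_{2}}$, whence its first and third mixed moments vanish and its second moments equal $\frac{T}{n}\delta_{j_{1},j_{2}}$. Because the two vectors share all moments up to order $3$, the two conditional expectations coincide monomial by monomial, and their difference is zero. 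There is no genuine obstacle here; the argument is purely combinatorial bookkeeping, and the only point requiring care is to verify that even the mixed monomials appearing for $k=3$ (for instance one drift factor against two noise factors) call upon no moment beyond third order. The content of the lemma is precisely that matching the first three moments suffices, because the products $\prod_{j=1}^{k}$ with $k\le 3$ never probe any higher moment.
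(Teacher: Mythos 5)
Your proof is correct and follows essentially the same route as the paper: both write the EM increment and $\eta_{\ell}^{(n)}(t_{\ell})-X_{t_{\ell-1}}^{(n)}$ as linear combinations of the noise vector ($\Delta Z_{\ell}^{(n)}$ resp.\ $W_{t_{\ell}}-W_{t_{\ell-1}}$) with ${\mathcal F}_{t_{\ell-1}}$-measurable coefficients, expand multilinearly, pull out the coefficients by independence ({\bf A2-1}, {\bf A2-2}), and cancel term by term using the matching of moments up to order three ({\bf A2-3} against the Gaussian moments). The only difference is cosmetic: the paper folds the drift into a zeroth noise component by setting $(\Delta Z_{\ell}^{(n)})^{0}:=T/n$ and $(W_{t_{\ell}}-W_{t_{\ell-1}})^{0}:=T/n$ with $\sigma_{0}:=b$, whereas you keep the drift term explicit.
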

\begin{proof}
We set $(\Delta Z_{\ell}^{(n)})^{0}:=T/n$ and $(W_{t_{\ell}}-W_{t_{\ell-1}})^{0}:=T/n$ to simplify the argument.
By {\bf A2-1}-{\bf A2-3}, we obtain
\begin{align*}
&{\mathbb E}\left[\prod_{j=1}^{k}(X_{t_{\ell}}^{(n)}-X_{t_{\ell-1}}^{(n)})^{i_{j}}-\prod_{j=1}^{k}(\eta_{\ell}^{(n)}(t_{\ell})-X_{t_{\ell-1}}^{(n)})^{i_{j}}\,\Bigggr|\,{\mathcal F}_{t_{\ell-1}}\right] \\
&={\mathbb E}\left[\prod_{h=1}^{k}\sum_{j=0}^{d}\sigma_{j}^{i_{h}}(X_{t_{\ell-1}}^{(n)})(\Delta Z_{\ell}^{(n)})^{j_{h}}-\prod_{h=1}^{k}\sum_{j=0}^{d}\sigma_{j}^{i_{h}}(X_{t_{\ell-1}}^{(n)})(W_{t_{\ell}}-W_{t_{\ell-1}})^{j_{h}}\,\Bigggl|\,{\mathcal F}_{t_{\ell-1}}\right] \\
&={\mathbb E}\left[\sum_{j_{1},j_{2},\ldots,j_{k} \in \{0,1,\ldots,d\}}\prod_{h=1}^{k}\sigma_{j_{h}}^{i_{h}}(X_{t_{\ell-1}}^{(n)})(\Delta Z_{\ell}^{(n)})^{j_{h}}-\sum_{j_{1},j_{2},\ldots,j_{k} \in \{0,1,\ldots,d\}}\prod_{h=1}^{k}\sigma_{j_{h}}^{i_{h}}(X_{t_{\ell-1}}^{(n)})(W_{t_{\ell}}-W_{t_{\ell-1}})^{j_{h}}\,\Bigggl|\,{\mathcal F}_{t_{\ell-1}}\right] \\
&=\sum_{j_{1},j_{2},\ldots,j_{k} \in \{0,1,\ldots,d\}}\left(\prod_{h=1}^{k}\sigma_{j_{h}}^{i_{h}}(X_{t_{\ell-1}}^{(n)})\right)\left({\mathbb E}\left[\prod_{h=1}^{k}(\Delta Z_{\ell}^{(n)})^{j_{h}}\right]-{\mathbb E}\left[\prod_{h=1}^{k}(W_{t_{\ell}}-W_{t_{\ell-1}})^{j_{h}}\right]\right) \\
&=0.
\end{align*}
\end{proof}
Next, we prove Theorem \ref{theo:3.3} using the above lemmas.

\subsection{Proof of Theorem \ref{theo:3.3}}
\label{sec:5.3}
\begin{proof}
From Lemma \ref{lem:5.6} and Taylor's theorem with Theorem \ref{theo:5.1}, we obtain
\begin{align}
\label{eq:5.10}
&{\mathbb E}[f(X_{T}^{(n)})]-{\mathbb E}[f(X_{T})]
={\mathbb E}[u(T,X_{T}^{(n)})]-{\mathbb E}[u(0,x_{0})] \\ \notag
&={\mathbb E}\left[\sum_{\ell=1}^{n}\biggl(u(t_{\ell},X_{t_{\ell}}^{(n)})-u(t_{\ell-1},X_{t_{\ell-1}}^{(n)})\biggr)\right]
={\mathbb E}\left[\sum_{\ell=1}^{n}\biggl(u(t_{\ell},X_{t_{\ell}}^{(n)})-u(t_{\ell},X_{t_{\ell}}^{t_{\ell-1},X_{t_{\ell-1}}^{(n)}})\biggr)\right] \\ \notag
&={\mathbb E}\left[\sum_{\ell=1}^{n}\left(\biggl(u(t_{\ell},X_{t_{\ell}}^{(n)})-u(t_{\ell},X_{t_{\ell-1}}^{(n)})\biggr)-\biggl(u(t_{\ell},\eta_{\ell}^{(n)}(t_{\ell}))-u(t_{\ell},X_{t_{\ell-1}}^{(n)})\biggr)\right)\right] \\ \notag
&\hspace{0.38cm}+{\mathbb E}\left[\sum_{\ell=1}^{n}\left(\biggl(u(t_{\ell},\eta_{\ell}^{(n)}(t_{\ell}))-u(t_{\ell},X_{t_{\ell-1}}^{(n)})\biggr)-\biggl(u(t_{\ell},X_{t_{\ell}}^{t_{\ell-1},X_{t_{\ell-1}}^{(n)}})-u(t_{\ell},X_{t_{\ell-1}}^{(n)})\biggr)\right)\right] \\ \notag
&={\mathbb E}\left[\sum_{\ell=1}^{n}\left(\sum_{k=1}^{3}\frac{1}{k!}\sum_{i_{1},i_{2},\ldots,i_{k} \in \{1,2,\ldots,d\}}\left.\frac{\partial^{k}u(t_{\ell},y)}{\partial y^{i_{1}}\partial y^{i_{2}}\ldots\partial y^{i_{k}}}\right|_{y=X_{t_{\ell-1}}^{(n)}}\left(\prod_{j=1}^{k}(X_{t_{\ell}}^{(n)}-X_{t_{\ell-1}}^{(n)})^{i_{j}}-\prod_{j=1}^{k}(\eta_{\ell}^{(n)}(t_{\ell})-X_{t_{\ell-1}}^{(n)})^{i_{j}}\right)\right)\right] \\ \notag
&\hspace{0.38cm}+{\mathbb E}\left[\sum_{\ell=1}^{n}\left(\sum_{k=1}^{3}\frac{1}{k!}\sum_{i_{1},i_{2},\ldots,i_{k} \in \{1,2,\ldots,d\}}\left.\frac{\partial^{k}u(t_{\ell},y)}{\partial y^{i_{1}}\partial y^{i_{2}}\ldots\partial y^{i_{k}}}\right|_{y=X_{t_{\ell-1}}^{(n)}}\left(\prod_{j=1}^{k}(\eta_{\ell}^{(n)}(t_{\ell})-X_{t_{\ell-1}}^{(n)})^{i_{j}}-\prod_{j=1}^{k}(X_{t_{\ell}}^{t_{\ell-1},X_{t_{\ell-1}}^{(n)}}-X_{t_{\ell-1}}^{(n)})^{i_{j}}\right)\right)\right] \\ \notag
&\hspace{0.38cm}+{\mathbb E}\left[\sum_{\ell=1}^{n}\left(R_{\ell}^{(n)}(X_{t_{\ell}}^{(n)})-R_{\ell}^{(n)}(X_{t_{\ell}}^{t_{\ell-1},X_{t_{\ell-1}}^{(n)}})\right)\right] \\ \notag
&=:I_{1}+I_{2}+I_{3},
\end{align}
where the reminder terms have the following form: for $Y=X_{t_{\ell}}^{(n)}$ and $Y=X_{t_{\ell}}^{t_{\ell-1},X_{t_{\ell-1}}^{(n)}}$, there exist corresponding $d \times d$ diagonal matrices $\theta_{\ell}^{(i_{1},i_{2},i_{3},i_{4})}(Y)$ such that for any $i \in \{1,2,\ldots,d\}$, $(\theta_{\ell}^{(i_{1},i_{2},i_{3},i_{4})}(Y))_{i}^{i} \in (0,1)$ and
$$
R_{\ell}^{(n)}(Y)=\frac{1}{4!}\sum_{i_{1},i_{2},i_{3},i_{4} \in \{1,2,\ldots,d\}}\left.\frac{\partial^{4}u(t_{\ell},y)}{\partial y^{i_{1}}\partial y^{i_{2}}\partial y^{i_{3}}\partial y^{i_{4}}}\right|_{y=X_{t_{\ell-1}}^{(n)}+\theta_{\ell}^{(i_{1},i_{2},i_{3},i_{4})}(Y)(Y-X_{t_{\ell-1}}^{(n)})}\prod_{j=1}^{4}(Y-X_{t_{\ell-1}}^{(n)})^{i_{j}}.
$$
Here, note that the reminder term $R_{\ell}^{(n)}(\eta_{\ell}^{(n)}(t_{\ell}))$ does not appear since it is offset.

\begin{itemize}
\item First, we estimate $I_{1}$.
By {\bf A2-1} and Lemma \ref{lem:5.8}, we obtain
\begin{align}
\label{eq:5.11}
I_{1}&=\sum_{\ell=1}^{n}\Bigggl(\sum_{k=1}^{3}\frac{1}{k!}\sum_{i_{1},i_{2},\ldots,i_{k} \in \{1,2,\ldots,d\}}{\mathbb E}\Bigggl[\left.\frac{\partial^{k}u(t_{\ell},y)}{\partial y^{i_{1}}\partial y^{i_{2}}\ldots\partial y^{i_{k}}}\right|_{y=X_{t_{\ell-1}}^{(n)}} \\ \notag
&\hspace{4.4cm}\times {\mathbb E}\left[\prod_{j=1}^{k}(X_{t_{\ell}}^{(n)}-X_{t_{\ell-1}}^{(n)})^{i_{j}}-\prod_{j=1}^{k}(\eta_{\ell}^{(n)}(t_{\ell})-X_{t_{\ell-1}}^{(n)})^{i_{j}}\,\Bigggr|\,{\mathcal F}_{t_{\ell-1}}\right]\Bigggr]\Bigggr) \\ \notag
&=0.
\end{align}
\item Next, we estimate $I_{2}$.
By {\bf A2-1}, Theorem \ref{theo:5.1}, Lemma \ref{lem:5.5} and Lemma \ref{lem:5.7}, we have
\begin{align}
\label{eq:5.12}
\left|I_{2}\right|
&\leq \sum_{\ell=1}^{n}\Bigggl(\sum_{k=1}^{3}\frac{1}{k!}\sum_{i_{1},i_{2},\ldots,i_{k} \in \{1,2,\ldots,d\}}{\mathbb E}\Bigggl[\left|\left.\frac{\partial^{k}u(t_{\ell},y)}{\partial y^{i_{1}}\partial y^{i_{2}}\ldots\partial y^{i_{k}}}\right|_{y=X_{t_{\ell-1}}^{(n)}}\right| \\ \notag
&\hspace{4.4cm}\times \left|{\mathbb E}\left[\prod_{j=1}^{k}(\eta_{\ell}^{(n)}(t_{\ell})-X_{t_{\ell-1}}^{(n)})^{i_{j}}-\prod_{j=1}^{k}(X_{t_{\ell}}^{t_{\ell-1},X_{t_{\ell-1}}^{(n)}}-X_{t_{\ell-1}}^{(n)})^{i_{j}}\,\Bigggr|\,{\mathcal F}_{t_{\ell-1}}\right]\right|\Bigggr]\Bigggr) \\ \notag
&\leq \frac{C}{n}\left(1+{\mathbb E}\left[\left|X_{t_{\ell-1}}^{(n)}\right|^{2(4r+3)}\right]\right) \\ \notag
&\leq \frac{C}{n}\exp\left\{C\left(1+n\left(M_{2}^{(n)}(Z) \vee M_{2(4r+3)}^{(n)}(Z)\right)\right)\right\}.
\end{align}
\item Finally, we estimate $I_{3}$.
Fix $\ell \in \{1,2,\ldots,n\}$.
By using the Cauchy-Schwarz inequality, Theorem \ref{theo:5.1}, {\bf A2-1} and Lemma \ref{lem:5.7}, we obtain
\begin{align}
\label{eq:5.13}
&{\mathbb E}\left[\left.\left|R_{\ell}^{(n)}(X_{t_{\ell}}^{(n)})\right|\,\right|\,{\mathcal F}_{t_{\ell-1}}\right] \\ \notag
&\leq C\sum_{i_{1},i_{2},i_{3},i_{4} \in \{1,2,\ldots,d\}}{\mathbb E}\left[\left|\left.\frac{\partial^{4}u(t_{\ell},y)}{\partial y^{i_{1}}\partial y^{i_{2}}\partial y^{i_{3}}\partial y^{i_{4}}}\right|_{y=X_{t_{\ell-1}}^{(n)}+\theta_{\ell}^{(i_{1},i_{2},i_{3},i_{4})}(X_{t_{\ell}}^{(n)})(X_{t_{\ell}}^{(n)}-X_{t_{\ell-1}}^{(n)})}\right|^{2}\,\Bigggr|\,{\mathcal F}_{t_{\ell-1}}\right]^{1/2} \\ \notag
&\hspace{7.2cm}\times {\mathbb E}\left[\left|\prod_{j=1}^{4}(X_{t_{\ell}}^{(n)}-X_{t_{\ell-1}}^{(n)})^{i_{j}}\right|^{2}\,\Bigggr|\,{\mathcal F}_{t_{\ell-1}}\right]^{1/2} \\ \notag
&\leq C\left(1+\left|X_{t_{\ell-1}}^{(n)}\right|^{4r}+{\mathbb E}\left[\left|X_{t_{\ell}}^{(n)}-X_{t_{\ell-1}}^{(n)}\right|^{4r}\,\Bigr|\,{\mathcal F}_{t_{\ell-1}}\right]\right)^{1/2}{\mathbb E}\left[\left|X_{t_{\ell}}^{(n)}-X_{t_{\ell-1}}^{(n)}\right|^{8}\,\Bigr|\,{\mathcal F}_{t_{\ell-1}}\right]^{1/2} \\ \notag
&\leq C\left(1+M_{4r}^{(n)}(Z)^{1/2}\right)\left(M_{8}^{(n)}(Z)^{1/2}+\frac{1}{n^{4}}\right)\left(1+\left|X_{t_{\ell-1}}^{(n)}\right|^{2(r+2)}\right).
\end{align}
Similarly, by using the Cauchy-Schwarz inequality, Theorem \ref{theo:5.1}, {\bf A2-1} and Lemma \ref{lem:5.4}, we have
\begin{align}
\label{eq:5.14}
{\mathbb E}\biggl[\Bigl|R_{\ell}^{(n)}(X_{t_{\ell}}^{t_{\ell-1},X_{t_{\ell-1}}^{(n)}})\Bigr|\,\Bigr|\,{\mathcal F}_{t_{\ell-1}}\biggr]
&\leq \frac{C}{n^{2}}\left(1+\left|X_{t_{\ell-1}}^{(n)}\right|^{2(r+2)}\right).
\end{align}
Thus by \eqref{eq:5.13}, \eqref{eq:5.14} and Lemma \ref{lem:5.7}, we obtain
\begin{align}
\label{eq:5.15}
\left|I_{3}\right|&\leq \sum_{\ell=1}^{n}\left({\mathbb E}\left[\left|R_{\ell}^{(n)}(X_{t_{\ell}}^{(n)})\right|\right]+{\mathbb E}\biggl[\Bigl|R_{\ell}^{(n)}(X_{t_{\ell}}^{t_{\ell-1},X_{t_{\ell-1}}^{(n)}})\Bigr|\biggr]\right) \\ \notag
&\leq C\left(1+M_{4r}^{(n)}(Z)^{1/2}\right)\exp\left\{C\left(1+n\left(M_{2}^{(n)}(Z) \vee M_{2(r+2)}^{(n)}(Z)\right)\right)\right\}\left(nM_{8}^{(n)}(Z)^{1/2}+\frac{1}{n}\right).
\end{align}
\end{itemize}
Therefore, it follows by combining \eqref{eq:5.10}, \eqref{eq:5.11}, \eqref{eq:5.12} and \eqref{eq:5.15} that
$$
\left|{\mathbb E}[f(X_{T}^{(n)})]-{\mathbb E}[f(X_{T})]\right| \leq C_{r}^{(n)}(Z)\left(nM_{8}^{(n)}(Z)^{1/2}+\frac{1}{n}\right).
$$
Hence we have established (i).

Next, let us see (ii), where 
\begin{equation*}
    M_{2p}^{(n)}(Z) = \mathbb{E}\left[\left((\Delta W^1)^2+(\Delta W^2)^2+\cdots+ (\Delta W^d)^2\right)^p \right].
\end{equation*}
Here $ \Delta W^{i}:= W_{n/T}^{i}-W_{0}^{i}$, $i=1,2,\ldots,d$ for the $d$-dimensional Brownian motion $ W $. 
By the elementary inequality 
\begin{equation*}
    \left(x^1+x^2+ \cdots + x^d\right)^p
    \leq d^{p-1} \left((x^1)^p+(x^2)^p+ \cdots + (x^d)^p\right), \quad x^1,x^2, \ldots, x^d \geq 0, 
\end{equation*}
we obtain that
\begin{equation*}
    M_{2p}^{(n)}(Z) \leq d^{p-1} \cdot d \cdot \mathbb{E}\left[(\Delta W^1)^{2p}\right]
    = d^p (2p-1)!! \left(\frac{T}{n} \right)^p.  
\end{equation*}
By this inequality, and by the 
inequality $ n \geq (8r+5) Td $,
we see that 
\begin{equation}\label{M2}
    n\left(M_{2}^{(n)}(Z) \vee M_{2(r+2)}^{(n)}(Z) \vee M_{2(4r+3)}^{(n)}(Z)\right)
    \leq n \cdot d\frac{T}{n}= dT
\end{equation}
and 
\begin{equation}\label{M4}
    M_{4r}^{(n)}(Z) \leq 1,
\end{equation}
so that we have \eqref{boundGauss}
with the upper bound $ K_d \leq d \sqrt{7\cdot 5 \cdot 3} = d \sqrt{105} $. 
For the lower bound, let us observe that, by the multinomial theorem,
\begin{equation*}
\begin{split}
    M_{2p}^{(n)}(Z) &=  \sum_{\substack{p_1,p_2, \ldots, p_d \geq 0 \\ p_1+p_2+ \cdots + p_d = p}}
    \frac{p!}{p_1!p_2! \cdots p_d!}
   \mathbb{E} \left[ (\Delta W^{1})^{2p_1}(\Delta W^{2})^{2p_2} \cdots
    (\Delta W^d)^{2p_d}
    \right] \\
    & =p!\left(\frac{T}{n}\right)^{p}\sum_{\substack{p_1,p_2, \ldots, p_d \geq 0 \\ p_1+p_2+ \cdots + p_d = p}}
    \left(\frac{(2p_1-1)!!}{p_1!}\frac{(2p_2-1)!!}{p_2!} \cdots \frac{(2p_d-1)!!}{p_d!} \right). 
    \end{split}
\end{equation*}
Since $ (2a-1)!!/a! \geq 1 $ for any $ a \in \mathbb{N} \cup \{0\} $, 
we see that 
\begin{equation*}
\begin{split}
    M_{2p}^{(n)}(Z) &\geq  p!\left(\frac{T}{n}\right)^{p}\sum_{\substack{p_1,p_2, \ldots, p_d \geq 0  \\ p_1+p_2+ \cdots + p_d = p}} 1  = \frac{(p+d-1)!}{(d-1)!}\left(\frac{T}{n}\right)^{p}.
    \end{split}
\end{equation*}
Thus substituting $ p = 4 $, we get the lower bound of $ K_d $. 

Next, let us consider the case (iii) where 
\begin{equation*}\label{Haarmoment}
    \begin{split}
        M_{2p}^{(n)}(Z)=
       {\mathbb E}\left[ \left(\sum_{j=1}^d ((\Delta Z^{(n)})^j)^2 \right)^p\right]
        =\sum_{j=1}^d{\mathbb E}\left[((\Delta Z^{(n)})^j)^{2p} \right]
        = 
        d(2^{K-1})^{p-1}\left(\frac{T}{n}\right)^p.
    \end{split}
\end{equation*}
Then, with the assumption $ n \geq T2^{K-1} $,
we retrieve \eqref{M2}, \eqref{M4} and also obtain 
\eqref{Haarbound}. 
For (iv), the observation 
\begin{equation*}\label{Walshmoment}
    \begin{split}
        M_{2p}^{(n)}(Z)={\mathbb E}\left[ \left(\sum_{j=1}^d ((\Delta Z^{(n)})^j)^2 \right)^p\right]
        = {\mathbb E}\left[\left( \sum_{j=1}^d \frac{T}{n} \right)^p\right]
        = d^p \left(\frac{T}{n}\right)^p
    \end{split}
\end{equation*}
together with the assumption $ n \geq T d $
leads to \eqref{Walshbound}. 
\end{proof}

\section*{Conclusions}
\label{sec:6}
We have provided two Euler-Maruyama approximations,based on the Haar system and the Walsh system, in Section \ref{sec:2}.
In Sections \ref{sec:3} and \ref{sec:5}, we theoretically showed that they have the same weak order $1$ of convergence as the standard Euler-Maruyama approximation by the Gaussian system, but the bounding constant can be very different 
depending on the choice of the mimicking random variables. 
The difference implied by Theorem \ref{theo:3.3} (iii) and (iv) is observed to be very sharp
by the example given as Proposition \ref{prop:3.new}. 
Whether the difference implied by 
Theorem \ref{theo:3.3} (ii) is observable or not 
is left open for future studies.
In Section \ref{sec:4}, 
we have experimentally confirmed that our schemes are more efficient than the Gaussian scheme in respect of computation time. 
In summary, we recommend to use
the Walsh scheme for high-dimensional EM approximations.

\section*{Acknowledgments}
\label{sec:7}
The authors would like to thank Kenji Yasutomi (Ritsumeikan Univ.), Takanori Adachi (Tokyo Metropolitan Univ.) and Gregory Markowsky (Monash Univ.) for their helpful comments.
The forth author was supported by JSPS KAKENHI Grant Number 17J05514.

\end{document}